\newif\ifworkinprogress
\newcommand{\RM}[1]{\textcolor{blue}{\textbf{[RM] #1}}}
\newcommand{\HW}[1]{\textcolor{green}{\textbf{[HW] #1}}} 		  
\newcommand{\RM}[1]{}
\newcommand{\HW}[1]{} 	
\newcommand{\R}{\mathbb{R}}
\newcommand{\T}{\top}
\newcommand{\la}{\langle}
\newcommand{\ra}{\rangle}
\newcommand{\lam}{\lambda}
\newcommand{\dt}{\delta}
\newcommand{\Dt}{\Delta}
\newcommand{\be}{\beta}
\newcommand{\ga}{\gamma}
\newcommand{\ep}{\epsilon}
\newcommand{\lt}{\left}
\newcommand{\rt}{\right}
\newcommand{\argmin}{\mathop{{\rm argmin}}}
\newcommand{\wtd}{\widetilde}
\newcommand{\Pb}{\mathbb{P}}
\newcommand{\Ex}{\mathbb{E}}
\newcommand{\dist}{\mathsf{dist}}
\newcommand{\supp}{\mathsf{supp}}
\newcommand{\itl}{\interleave}
\newcommand{\subG}{\mathsf{subG}}
\newcommand{\cB}{\mathcal{B}}
\newcommand{\cN}{\mathcal{N}}
\newcommand{\cS}{\mathcal{S}}
\newcommand{\cZ}{\mathcal{Z}}
\newcommand{\B}{\boldsymbol}
\newcommand{\tdH}{\widetilde{\boldsymbol{H}}}
\newcommand{\tdP}{\widetilde{\boldsymbol{P}}}
\newcommand{\hatP}{\hat{\boldsymbol{P}}}
\newcommand{\hatbeta}{\hat{\boldsymbol{\beta}}}
\newtheorem{theorem}{Theorem}[section]
\newtheorem{corollary}[theorem]{Corollary}
\newtheorem{lemma}[theorem]{Lemma}
\newtheorem{proposition}[theorem]{Proposition}
\newtheorem{assumption}[theorem]{Assumption}
\renewcommand{\theequation}{\arabic{section}.\arabic{equation}}
\begin{document}
	
	\title{Linear regression with partially mismatched data: \\ local search with theoretical guarantees
		\footnote{This research was supported in part, by grants from the Office of Naval Research: ONR-N000141812298
			(YIP), N000142112841, the National Science Foundation: NSF-IIS-1718258, IBM and Liberty Mutual Insurance, awarded to Rahul Mazumder.}
	}
\author{
	Rahul Mazumder\thanks{MIT Sloan School of Management, Operations Research Center and MIT Center for Statistics ({email: rahulmaz@mit.edu}).}
		\and
		Haoyue Wang\thanks{MIT Operations Research Center (email: haoyuew@mit.edu).}
}

\date{}
\maketitle
	
	\begin{abstract}
		Linear regression is a fundamental modeling tool in statistics and related fields. In this paper, we study an important variant of linear regression in which the predictor-response pairs are partially mismatched. 
		We use an optimization formulation to simultaneously learn the underlying regression coefficients and the permutation corresponding to the mismatches. The combinatorial structure of the problem leads to computational challenges. We propose and study a simple  greedy local search algorithm for this optimization problem that enjoys strong theoretical guarantees and appealing computational performance. 
		We prove that under a suitable scaling of the number of mismatched pairs compared to the number of samples and features, and certain assumptions on problem data; 
		our local search algorithm converges to a nearly-optimal solution at a linear rate.
		In particular, in the noiseless case, 
		our algorithm converges to the global optimal solution with a linear convergence rate. 
			Based on this result, 
			we prove an upper bound for the estimation error of the parameter. 
		We also propose an approximate local search step that allows us to scale our approach to much larger instances. We conduct numerical experiments to gather further insights into our theoretical results, and show promising performance gains compared to existing approaches. 
	\end{abstract}

\section{Introduction}
Linear regression and its extensions are among the most fundamental models in statistics and related fields. In the classical and most common setting, we are given $n$ samples with features $\B{x}_{i} \in \R^{d}$ and response $y_{i} \in \R$, where $i$ denotes the sample indices. We assume that the features and responses are perfectly matched i.e., $\B{x}_i$ and $y_i$ correspond to the same record or sample. 
However, in important applications (for example, due to errors in the data merging process), the correspondence between the response and features may be \emph{broken}~\cite{neter1965effect,pananjady2017denoising,slawski2020two}.
This erroneous correspondence needs to be adjusted before performing downstream statistical analysis.
Thus motivated, we consider a mismatched linear model with responses $\B{y} = [y_1,...,y_n]^\T\in \R^n$ and covariates $\B{X}=[\B{x}_1,..., \B{x}_n]^\T \in \R^{n\times d} $ satisfying 
\begin{eqnarray}\label{model1}
\B{P}^* \B{y} = \B{X} \B{\beta}^* + \B{\ep} 
\end{eqnarray}
where $\B{\beta}^*\in \R^d$ are the true regression coefficients, $\B{\ep} = [\ep_1,...,\ep_n]^\T \in \R^n$ is the noise term, and $\B P^*\in \R^{n\times n}$ is an unknown permutation matrix. 
We consider the classical setting where $n > d$ and $\B X$ has full rank; and seek to estimate both $\B \be^*$ and $\B P^*$ based on the $n$ observations $\{(y_{i}, \B x_{i})\}_{1}^{n}$. Note that the main computational difficulty in this task arises from the unknown permutation. 

Linear regression with mismatched/permuted data---for example, model \eqref{model1}---has a long history in statistics dating back to 1960s~\cite{neter1965effect,degroot1971matchmaking,degroot1976matching}. In addition to the aforementioned application in record linkage, similar problems also appear in robotics \cite{stachniss2016simultaneous}, multi-target tracking \cite{blackman1986multiple} and signal processing \cite{balakrishnan1962problem}, among others.  
Recently, this problem has garnered significant attention from the statistics and machine learning communities. A series of recent works \cite{emiya2014compressed,unnikrishnan2018unlabeled,pananjady2017denoising,pananjady2017linear,abid2017linear,abid2018stochastic,hsu2017linear,haghighatshoar2017signal,shi2020spherical,wang2018signal,dokmanic2019permutations,tsakiris2020algebraic,slawski2020two,slawski2021pseudo,slawski2020sparse} have studied the statistical and computational aspects of this model. To learn the coefficients $\B \be^*$ and the matrix $\B P^*$, one can consider the following natural optimization problem:
\begin{eqnarray}\label{intro: problem0}
\min_{\B \be, \B P}~ \| \B P \B y - \B X \B \be \|^2 ~~~ {\rm s.t.} ~~~ \B P \in \Pi_n
\end{eqnarray}
where $\Pi_n$ is the set of $n \times n$ permutation matrices.
Solving problem \eqref{intro: problem0} is difficult as there are exponentially many choices for $\B P \in \Pi_{n}$. Given $\B P$ however, it is easy to estimate $\B \beta$ via least squares.
\cite{unnikrishnan2018unlabeled} shows that in the noiseless setting ($\B \ep=\B 0$),
a solution $(\hat{\B{P}}, \hat{\B \be})$ of problem \eqref{intro: problem0} equals $( \B P^*, \B  \be^*)$ with probability one if $n\ge 2d$ and the entries of $\B X$ are independent and identically distributed (iid) as per a distribution that is absolutely continuous with respect to the Lebesgue measure.
\cite{pananjady2017linear,hsu2017linear} studies the estimation of $(\B P^*, \B \be^*) $ under the noisy setting. 
It is shown in
\cite{pananjady2017linear} that Problem \eqref{intro: problem0} is NP-hard if $d\ge \kappa n $ for some constant $\kappa>0$. A polynomial-time approximation algorithm appears in \cite{hsu2017linear} for a fixed $d$. 
However, as noted in~\cite{hsu2017linear}, this algorithm does not appear to be  efficient in practice. \cite{emiya2014compressed} propose a branch-and-bound method, that can solve small problems with $n\le 20$ (within a reasonable time). \cite{peng2020linear} propose a branch-and-bound method for a concave minimization formulation, which can solve problem~\eqref{intro: problem0} with $d\le 8$ and $n \approx 100$ (the authors report a runtime of 40 minutes to solve instances with $d=8$ and $n=100$). \cite{tsakiris2020algebraic} propose an approach using tools from algebraic geometry, which can handle problems with $d\le 6$ and $n = 10^3 \sim 10^5$---the cost of this method increases exponentially with $d$. This approach is exact for the noiseless case but approximate for the noisy case ($\B\ep \neq \B{0}$). 
Several heuristics have been proposed for~\eqref{intro: problem0}: Examples include, alternating minimization~\cite{haghighatshoar2017signal,wang2018signal},  Expectation Maximization~\cite{abid2018stochastic}---as far as we can tell, these methods are sensitive to initialization, and have limited theoretical guarantees.

As discussed in \cite{slawski2019linear,slawski2020two}, in several applications, a small fraction of the samples are mismatched --- that is, the permutation $\B{P}^*$ is \emph{sparse}.
In other words, 
 if we let $r:= |\{i\in [n] ~|~ \B P^* \B e_i  \neq  \B e_i \}|$ where $\B e_1, ..., \B e_n$ are the standard basis elements of $\R^n$,
then $r$ is much smaller than $n$. In this paper, we focus on such sparse permutation matrices, and assume the value of $r$ is known or a good estimate is available to the practitioner.
This motivates a constrained version of~\eqref{intro: problem0}, given by
\begin{equation}\label{intro: problem1}
\min_{\B \be, \B P}~ \| \B P \B y -\B  X \B \be \|^2 ~~~ {\rm s.t.} ~~~ \B P \in \Pi_n, ~ \dist (\B P,  \B I_n) \le R
\end{equation}
where, the constraint $\dist(\B P, \B{I}_{n}) \leq R$  restricts the number of mismatches between $\B P$ and the identity permutation to be below $R$---See~\eqref{permutation-dist-defn-1} for a formal definition of $\dist(\cdot,\cdot)$. Above, $R$ is taken such that $r \le R \le n$ (Further details on the choice of $R$ can be found in Sections \ref{section: theoretical guarantees} and \ref{sec:expts}). Note that as long as $r \le R \le n$, the true parameters $(\B P^*, \B \beta^*)$ lead to a feasible solution to \eqref{intro: problem1}. In the special case when $R = n$, the constraint $\dist(\B P, \B I_{n}) \leq R$ is redundant, and Problem \eqref{intro: problem1} is equivalent to problem \eqref{intro: problem0}. 
Interesting convex optimization approaches based on robust regression have been proposed in~\cite{slawski2019linear} to approximately solve~\eqref{intro: problem1} when $r \ll n$. The authors focus on obtaining an estimate of $\B\beta^*$.
Similar ideas have been extended to consider problems with multiple responses in~\cite{slawski2020two}.

Problem \eqref{intro: problem1} can be formulated as a mixed-integer program (MIP)
with $O(n^2)$ binary variables (to model the unknown permutation matrix). Solving this MIP with off-the-shelf MIP solvers (e.g., Gurobi) becomes computationally expensive for even a small value of $n$ (e.g. $n \approx 50$). 
To the best of our knowledge, we are not aware of computationally practical algorithms with theoretical guarantees that can
optimally solve the original problem~\eqref{intro: problem1}, under suitable assumptions on the problem data. Addressing this gap is the main focus of this paper: We propose and study a novel greedy local search method\footnote{We draw inspiration from the local search method presented in~\cite{hazimeh2020fast} in the context of a different problem: high dimensional sparse regression.} for Problem~\eqref{intro: problem1}.
Loosely speaking, our algorithm at every step performs a greedy \emph{swap} or transposition, in an attempt to improve the cost function. 
This algorithm is typically efficient in practice based on our numerical experiments. We also propose an approximate version of the greedy swap procedure that scales to much larger problem instances.
We establish theoretical guarantees on the convergence of the proposed method under suitable assumptions on the problem data.
Under a suitable scaling of the number of mismatched pairs compared to the number of samples and features, and certain assumptions on the covariates and noise; our local search method converges to an objective value that is at most a constant multiple of the squared norm of the underlying noise term. From a statistical viewpoint, this is the best objective value that one can hope to obtain (due to the noise in the problem).
Interestingly, in the special case of $\B \ep = \B 0$ (i.e., the noiseless setting),
our algorithm
converges to an optimal solution of~\eqref{intro: problem1} with a linear rate\footnote{The extended abstract \cite{mazumder2021linear} which is a shorter version of this manuscript, studies the noiseless setting.}.
	We also prove an upper bound of the estimation error of $\B \beta^*$ (in $\ell_2$ norm) and derive a bound on the number of iterations taken by our proposed local search method to find a solution with this estimation error.

\medskip

\noindent{\bf{Notation and preliminaries}:}
For a vector $\B a$, we let $\|\B a\|$ denote the Euclidean norm, $\| \B a \|_{\infty}$ the $\ell_{\infty}$-norm and $\| \B a\|_0$ the $\ell_0$-pseudo-norm (i.e., number of nonzeros) of $\B a$.	
We let $ \interleave \cdot \itl_2$ denote the operator norm for matrices. 
Let $\{\B e_1,...,\B e_n\}$ be the natural orthogonal basis of $\R^n$. 
For a finite set $S$, we let $ \# S $ denote its cardinality. For any permutation matrix $\B P$, let $\pi_{\B P}$ be the corresponding permutation of $\{1,2,....,n\}$, that is, $\pi_{\B P}(i) = j$ if and only if $\B e_i^\T \B P  = \B e_j^\T$ if and only if $P_{ij} = 1$. 
We define the distance between two permutation matrices $\B P$ and $\B Q$ as
\begin{equation}\label{permutation-dist-defn-1}
\dist(\B P,\B Q)  = \# \lt\{ i \in [n] : \pi_{\B P} (i) \neq \pi_{\B Q}(i)  \rt\}.  
\end{equation}
Recall that we assume $r = \dist(\B P^*, \B I_n)$. 
For a given permutation matrix $\B P$, define 
the $m$-neighbourhood of $\B P$ as 
\begin{equation}
\cN_m(\B P) := \{  \B Q \in \Pi_n  : ~ \dist(\B P,\B Q) \le m  \} .
\end{equation}
It is easy to check that $\cN_1(\B P) = \{\B P\}$, and for any $R\ge 2$, $\cN_R(\B P)$ contains more than one element. 
For any permutation matrix $\B P\in \Pi_n$, we define its support as:
$\supp(\B P) :=  \lt\{   i\in [n] :~ \pi_{\B P}(i) \neq i    \rt\}.$
For a real symmetric matrix $\B A$, let $\lam_{\max}(\B A)$ and $\lam_{\min }(\B A)$ denote the largest and smallest eigenvalues of $\B A$, respectively. 

For two positive scalar sequences $\{a_n\}, \{b_n\}$, we write 
$a_n =  O(b_n)$ or equivalently, $a_n/b_n =  O(1)$, if there exists a universal constant $C$ such that $ a_n \le C b_n $. We write 
$a_n = \Omega (b_n)$ or equivalently, $a_n/b_n =  \Omega(1)$, if there exists a universal constant $c$ such that $ a_n \ge c b_n $. 
We write $a_n = \Theta (b_n)$ if both $a_n = O(b_n)  $ and $a_n = \Omega(b_n)$ hold.

\section{A local search method}
\label{sec:local-search-sec}
Here we present our local search method for~\eqref{intro: problem1}. For any fixed $\B P\in \Pi_n$, by minimizing the objective function in \eqref{intro: problem1} with respect to $\B \beta$, we have an equivalent formulation
\begin{equation}\label{problem2}
\min_{\B P} ~~ \| \B P \B y  - \B H \B P \B y \|^2 ~~{\rm s.t.} ~~\B P\in \Pi_n , ~ \dist(\B P, \B  I_n) \le R \ ,
\end{equation}
where $\B H = \B X (\B X^\T \B X)^{-1} \B X^\T$ is the projection matrix onto the columns of $\B X$. To simplify notation, denote $ \wtd {\B H} := \B I_n - \B H$, then~\eqref{problem2} is equivalent to
\begin{equation}\label{problem3}
\min_{\B P} ~~ \| \wtd{\B H} \B P \B y \|^2 ~~ {\rm s.t.} ~\B P\in \Pi_n   , ~ \dist(\B P, \B I_n) \le R \ .
\end{equation}
Our local search approach for the optimization of Problem~\eqref{problem3} is summarized in Algorithm~\ref{alg: AFW}.
\begin{algorithm}[H]
	\caption{A local search method for Problem~\eqref{problem3}.}
	\label{alg: AFW}
	\begin{algorithmic}
		\STATE \textbf{Input}: Initial permutation $\B P^{(0)} = \B I_n$. Tolerance $\mathfrak{e} \geq 0$ and maximum number of iterations, $K$.
		
		\STATE For $k  = 0,1,2,....$ 
		\begin{eqnarray}\label{update}
		\B P^{(k+1)} \in \argmin_{\B P} \lt\{     \| \wtd{\B H} \B P\B y\|^2 :~  \dist(\B P, \B P^{(k)}) \le 2, ~ \dist(\B P, \B I_n )\le R \rt\}  .
		\end{eqnarray}

		~~~~
		If $  \| \wtd{\B H} \B P^{(k)} \B y \|^2 - \| \wtd{\B H} \B P^{(k+1)} \B y \|^2 \leq \mathfrak{e}$ or $k = K$, output $\B P^{(k)}$. 
	\end{algorithmic}
\end{algorithm}
At iteration $k$, Algorithm~\ref{alg: AFW} finds a swap  (within a distance of $R$ from $\B I_n$) that leads to the smallest objective value. 
To see the computational cost of~\eqref{update},  note that:
\begin{eqnarray}\label{eq1}
&&\| \wtd{\B H} \B P\B y \|^2 = \| \tdH (\B P-\B P^{(k)})\B y +\tdH \B P^{(k)}\B y  \|^2 \nonumber\\
&=&
\| \tdH (\B P-\B P^{(k)})\B y \|^2
+2 \la (\B P-\B P^{(k)})\B y,  \tdH \B P^{(k)} \B y   \ra  +  \|\tdH \B P^{(k)}\B y  \|^2 \ .
\end{eqnarray}
For each $\B P$,
with $\dist (\B P, \B P^{(k)}) \le 2$, the vector $(\B P-\B P^{(k)}) \B y$ has at most two nonzero entries. 
Since we pre-compute $\tdH$, 
computing the first term in \eqref{eq1} costs $O(1)$ operations. 
As we retain a copy of $\tdH \B P^{(k)}\B y $ in memory, computing the second term in~\eqref{eq1} also costs $O(1)$ operations. Therefore, computing~\eqref{update}
requires $O(n^2)$ operations, as there are at most $n^2$-many possible swaps to search over. The $O(n^2)$ per-iteration cost is quite reasonable for medium-sized examples with $n$ being a few hundred to a few thousand, but might be expensive for larger examples. In Section \ref{section: fast local search steps}, we propose a fast method to find an approximate solution of \eqref{update} that scales to instances with $n \approx 10^7$ in a few minutes (see Section~\ref{sec:expts} for numerical findings). 

\section{Theoretical guarantees for Algorithm~\ref{alg: AFW}}\label{section: theoretical guarantees}

Here we present theoretical guarantees for Algorithm~\ref{alg: AFW}. The main assumptions and conclusions appear in Section~\ref{subsection: Main results}. Section~\ref{subsection: proofs of main theorems} presents the proofs of the main theorems. The development in Sections~\ref{subsection: Main results} and~\ref{subsection: proofs of main theorems} assumes that the problem data (i.e., $\B y, \B X, \B \ep$) is deterministic.  
Section~\ref{subsection: sufficient conditions for assumptions} discusses conditions on the distribution of the features and the noise term, under which the main assumptions hold true with high probability.

\subsection{Main results}\label{subsection: Main results}

We state and prove the main theorems on the convergence of Algorithm \ref{alg: AFW}.  For any $m\le n$, define 
\begin{equation}\label{defn-Bm}
\cB_m:= \lt\{    \B w \in \R^n:~    \| \B w\|_0 \le m \rt\}.
\end{equation}
We first state the assumptions useful for our technical analysis.

\begin{assumption}\label{ass-main0}
		Suppose $\B X$, $\B y$, $\B \ep$, $\B\beta^*$ and $\B P^*$ satisfy the model \eqref{model1} with $\dist(\B P^*, \B I_n) \le r$. Suppose the following conditions hold:

	\noindent
	(1) There exist constants $U>L>0$ such that 
	\begin{equation}
	\max_{i,j\in[n]} |y_i-y_j| \le U,  ~~~ {\rm and } ~~~ | (P^*y)_i - y_i | \ge L   ~~~~~ \forall i\in \supp(\B P^*) \ . \nonumber
	\end{equation}
	(2) Set $R=10 C_1rU^2/L^2+4$ for some constant $ C_1 > 1$.\\
	(3) There is a constant $\rho_n =  O(d \log (n)/n)$ 
	such that $R \rho_n \le L^2/(90U^2)$, and 
	\begin{eqnarray}\label{ass-re}
	\| \B H \B u\|^2 \le \rho_n \| \B u\|^2 ~~ \forall \B u\in \cB_{4}, ~~\text{and} ~~\| \B H \B u\|^2 \le R\rho_n \| \B u\|^2 ~~ \forall \B u\in \cB_{2R} \ . 
	\end{eqnarray}
	(4) There is a constant $ \bar \sigma\ge 0$ 
	satisfying $\bar \sigma \le \min\{0.5, (\rho_nd)^{-1/2}\} L^2/(80U) $ 
	such that 
	\begin{equation}\label{ineq-ass1}
	\| \tdH \B \ep\|_{\infty} \le \bar\sigma, ~~ 
	\|\B \ep\|_\infty \le \bar \sigma, ~~\text{and}~~ \| \B H\B \ep \| \le \sqrt{d} \bar \sigma \ .  
	\end{equation}
\end{assumption}

Note that the lower bound in 
Assumption~\ref{ass-main0}~(1) states that the $y$-value for a record that has been mismatched is not too close to its original value (before mismatch).
Assumption~\ref{ass-main0}~(2) states that $R$ is set to a constant multiple of $r$. This constant can be large ($\ge 10U^2/L^2$), and appears to be an artifact of our proof techniques. Our numerical experience appears to suggest that this constant can be much smaller in practice. 
Assumption \ref{ass-main0}~(3) is a restricted eigenvalue (RE)-type condition~\cite{wainwright2019high} stating that:
a multiplication of any $(2R)$-sparse vector by $\B H$ will result in 
a vector with small norm (in the case $R\rho_n <1$). 
Section~\ref{subsection: sufficient conditions for assumptions} discusses conditions on the distribution of the rows of $\B X$ under which   Assumption \ref{ass-main0}~(3) holds true with high probability.
Note that if $\rho_n =  \Theta(d \log (n)/n)$, then for the assumption $R \rho_n \le L^2/(90U^2)$ to hold true, we require $n/\log (n) = \Omega(dr)$. 
Assumption \ref{ass-main0}~(4) limits the amount of noise $\B \ep$ in the problem. Section~\ref{subsection: sufficient conditions for assumptions} presents conditions on the distributions of $\B\ep$ and $\B X$ (in a random design setting) which ensures Assumption~\ref{ass-main0}~(4) holds true with high probability.

Assumption~\ref{ass-main0}~(3) plays an important role in our technical analysis. In particular, this allows us to approximate the objective function in~\eqref{problem3} with one that is easier to analyze. To provide some intuition,  we write
$\tdH \B P^{(k)}\B y =  \tdH(\B P^{(k)} \B y - \B P^* \B y) + \tdH \B \ep$---noting that $\tdH \B P^* \B y = \tdH (\B X\B \beta^* + \B \ep) = \tdH \B \ep$, and assuming that the noise $\B \epsilon$ is small, we have:
\begin{equation}\label{approx-obj}
\begin{aligned}
\| \tdH \B P^{(k)}\B y\|^2 \approx \| \tdH( \B P^{(k)} \B y - \B P^* \B y) \|
\approx  \| \B P^{(k)} \B y - \B P^* \B y \|^2. 
\end{aligned}
\end{equation}
Intuitively, the term on the right-hand side is the approximate objective that we analyze in our theory. Lemma~\ref{lemma: one-step} presents a one-step decrease property on the approximate objective function.  
\begin{lemma}\label{lemma: one-step}
	(One-step decrease)
	Given any $\B y\in \R^n $ and $\B P, \B P^* \in \Pi_n$, there exists a permutation matrix $\tdP \in \Pi_n$ such that $\dist(\tdP, \B P) = 2$, $\supp(\tdP  (\B P^*)^{-1}) \subseteq \supp( \B P  (\B P^*)^{-1}) $ and 
	\begin{equation}\label{one-step ineq1}
	\| \B P \B y - \B P^* \B y\|^2  - \| \tdP \B y - \B P^* \B y \|^2 \ge (1/2)  \| \B P \B y - \B P^* \B y\|^2_{\infty}  \ . 
	\end{equation}
	If in addition $ \| \B P \B y - \B P^* \B y \|_{0} \le m$ for some $m\le n$, then 
	\begin{equation}\label{one-step ineq2}
	\| \tdP \B y - \B P^* \B y\|^2  \le \lt( 1- 1/(2m)\rt)  \| \B P \B y - \B P^* \B y\|^2 \ . 
	\end{equation}
\end{lemma}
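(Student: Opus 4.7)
The second bound \eqref{one-step ineq2} follows from the first: writing $\B z := \B P \B y - \B P^* \B y$, the assumption $\|\B z\|_0 \le m$ gives $\|\B z\|_\infty^2 \ge \|\B z\|^2/m$, so \eqref{one-step ineq1} yields $\|\tdP\B y - \B P^*\B y\|^2 \le (1 - 1/(2m))\|\B z\|^2$. I therefore focus on constructing a $\tdP$ that realizes \eqref{one-step ineq1}.

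The key structural observation is the cycle decomposition of the mismatch pattern. Set $S := \supp(\B P(\B P^*)^{-1})$ and consider the permutation $\tau$ on $[n]$ defined by $\pi_{\B P}(\tau(i)) = \pi_{\B P^*}(i)$; this $\tau$ fixes every index outside $S$ and its cycles of length $\ge 2$ partition $S$. The vector $\B z$ is supported on $S$, and within any such cycle $\{i_0, i_1, \ldots, i_{T-1}\}$ with $i_{t+1} = \tau(i_t)$, the identity $z_{i_t} = y_{\pi_{\B P}(i_t)} - y_{\pi_{\B P}(i_{t+1})}$ telescopes to $\sum_t z_{i_t} = 0$. Crucially, any transposition of two indices both lying in $S$ already produces a $\tdP$ with $\dist(\tdP, \B P) = 2$ and $\supp(\tdP(\B P^*)^{-1}) \subseteq S$, so the only remaining task is to exhibit one such transposition whose one-step decrease is at least $(1/2)\|\B z\|_\infty^2$.

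The plan is to pick $i^* \in S$ attaining $|z_{i^*}| = \|\B z\|_\infty$, and via the sign symmetry $\B y \leftrightarrow -\B y$ (which preserves every squared norm in the problem) reduce to the case $z_{i^*} = M > 0$. I work inside the cycle $C$ that contains $i^*$, relabeled so $i_0 = i^*$, and introduce the discrete walk $r_t := y_{\pi_{\B P}(i_t)} - y_{\pi_{\B P}(i_0)}$, which satisfies $r_0 = r_T = 0$, $r_1 = -M$, and $r_{t+1} - r_t = -z_{i_t}$. Since the walk drops to $-M$ at $t=1$ and must return to $0$ by time $T$, it must first cross the level $-M/2$; I define $t^\star := \min\{t \ge 2 : r_t \ge -M/2\}$ and let $\tdP$ be the matrix obtained from $\B P$ by transposing positions $i_0$ and $i_{t^\star-1}$.

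To close the argument, I would use the elementary identity $\|\B P\B y - \B P^*\B y\|^2 - \|\tdP\B y - \B P^*\B y\|^2 = -2\bigl((\B P\B y)_{i_0} - (\B P\B y)_{i_{t^\star-1}}\bigr)\bigl((\B P^*\B y)_{i_0} - (\B P^*\B y)_{i_{t^\star-1}}\bigr)$ and rewrite the decrease in $r$-coordinates as $-2 r_{t^\star-1}(M + r_{t^\star})$; by the defining property of $t^\star$, $r_{t^\star-1} < -M/2$ and $r_{t^\star} \ge -M/2$, so both factors exceed $M/2$ in absolute value with compatible signs, giving decrease $> M^2/2 = (1/2)\|\B z\|_\infty^2$. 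The main obstacle is choosing the right pair to transpose: the natural greedy candidates $(i^*, \tau(i^*))$ and $(i^*, \tau^{-1}(i^*))$ produce decrease $-2 z_{i^*} z_{\tau(i^*)}$ or $-2 z_{i^*} z_{\tau^{-1}(i^*)}$, which can be arbitrarily small if the immediate neighbors of $i^*$ in the cycle carry tiny $|z|$-values. The ``halfway-recovery'' index $t^\star$ is the correct place to cut the cycle, because it is the first time at least half of the initial drop of size $M$ has been undone, which is precisely what forces both geometric factors above to be simultaneously of order $M$.
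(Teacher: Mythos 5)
Your construction is correct and is essentially the paper's own argument: pick an index attaining $\|\B P\B y-\B P^*\B y\|_\infty$, follow the cycle of the mismatch permutation, and transpose at a step where the induced walk crosses the half-level, which forces both factors in the rank-two swap identity to have magnitude at least $M/2$ and hence decrease at least $M^2/2$. The only differences are cosmetic: the paper first reduces to $\B P^*=\B I_n$ and closes with a three-case ordering analysis, whereas you keep general $\B P^*$ via the permutation $\tau$ and finish with the single product identity $-2r_{t^\star-1}(M+r_{t^\star})$, a slightly cleaner packaging of the same estimate.
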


The main results 
make use of Lemma~\ref{lemma: one-step} and formalize the intuition conveyed in~\eqref{approx-obj}. 
We first present
a result regarding the support of the permutation matrix $\B P^{(k)}$ delivered by Algorithm~\ref{alg: AFW}.
\begin{proposition}\label{prop: support-inclusion}
	(Support detection)
	Suppose Assumption \ref{ass-main0} holds. Let $\{\B P^{(k)}\}_{k \geq 0}$ be the permutation matrices generated by Algorithm \ref{alg: AFW}. Then for all $k\ge R/2$, it holds $ \supp(\B P^{*}) \subseteq  \supp(\B P^{(k)})  $.
\end{proposition}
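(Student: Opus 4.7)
The plan is to prove the inclusion $\supp(\B P^*) \subseteq \supp(\B P^{(k)})$ by induction on $k$: at each iteration where the inclusion fails, I show the greedy swap must strictly enlarge the overlap $\supp(\B P^*) \cap \supp(\B P^{(k)})$. Since each iteration of \eqref{update} alters the support by at most two indices and $|\supp(\B P^*)| = r \leq R/2$ (by Assumption~\ref{ass-main0}~(2)), the containment must hold once $k \geq R/2$.

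Suppose the inclusion fails at iteration $k$ and pick any $i^* \in \supp(\B P^*) \setminus \supp(\B P^{(k)})$. Then $\pi_{\B P^{(k)}}(i^*) = i^*$ while $\pi_{\B P^*}(i^*) \neq i^*$, so the $i^*$-th coordinate of $\B P^{(k)} \B y - \B P^* \B y$ equals $y_{i^*} - (\B P^* \B y)_{i^*}$, whose absolute value is at least $L$ by Assumption~\ref{ass-main0}~(1). In particular $\|\B P^{(k)} \B y - \B P^* \B y\|_\infty \geq L$. Applying Lemma~\ref{lemma: one-step} with $\B P = \B P^{(k)}$ produces a permutation $\tdP$ with $\dist(\tdP,\B P^{(k)}) = 2$, $\supp(\tdP (\B P^*)^{-1}) \subseteq \supp(\B P^{(k)}(\B P^*)^{-1})$, and
\[
\|\B P^{(k)} \B y - \B P^* \B y\|^2 - \|\tdP \B y - \B P^* \B y\|^2 \;\geq\; L^2/2.
\]
By tracking which indices the swap in Lemma~\ref{lemma: one-step} touches, and combining $|\supp(\B P^{(k)})| \leq R$, $|\supp(\B P^*)| = r$ with the bound $R \geq 10 C_1 r U^2/L^2 + 4$ in Assumption~\ref{ass-main0}~(2), one verifies feasibility $\dist(\tdP, \B I_n) \leq R$ for the update \eqref{update}.

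Next I translate the gap in the approximate objective into one for the true objective $\|\tdH \B P \B y\|^2$. Writing $\tdH \B P \B y = \tdH \bigl((\B P - \B P^*) \B y\bigr) + \tdH \B \ep$ (since $\tdH \B X \B \beta^* = \B 0$), expanding the square, and bounding cross-terms via $\|\tdH \B \ep\|_\infty \leq \bar\sigma$ and the quadratic term via the restricted-eigenvalue bound $\|\B H \B v\|^2 \leq R \rho_n \|\B v\|^2$ for the $2R$-sparse vector $\B v := (\B P - \B P^*) \B y$ (Assumption~\ref{ass-main0}~(3)--(4)), together with the tight scalings $R \rho_n \leq L^2/(90 U^2)$ and $\bar\sigma \leq L^2/(80 U)$, the gap $L^2/2$ in the approximate objective propagates (up to a constant factor) to a strictly positive gap $\|\tdH \B P^{(k)} \B y\|^2 - \|\tdH \tdP \B y\|^2 > 0$.

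Finally, greediness of \eqref{update} gives $\|\tdH \B P^{(k+1)} \B y\|^2 \leq \|\tdH \tdP \B y\|^2$. Applied to any candidate swap $\tdP'$ that does \emph{not} introduce a new index of $\supp(\B P^*)$, a parallel comparison shows that $\tdP'$ modifies only coordinates of $(\B P - \B P^*) \B y$ at already-discovered indices, yielding at best a noise-level improvement of order $O(\bar\sigma U)$---strictly smaller than the $\Omega(L^2)$ improvement produced by $\tdP$. Hence the minimizer $\B P^{(k+1)}$ itself must expand $\supp(\B P^*) \cap \supp(\B P^{(k)})$, and iterating the argument gives $\supp(\B P^*) \subseteq \supp(\B P^{(k)})$ for every $k \geq R/2$. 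The main obstacle is precisely this last comparison: quantitatively ruling out greedy swaps that waste their effort away from $\supp(\B P^*)$ requires sharp bookkeeping between the approximate and true objectives, relying on the interlocking inequalities among $L, U, R, \rho_n, \bar\sigma$ in Assumption~\ref{ass-main0}.
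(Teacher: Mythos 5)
The fatal step is your final comparison: you claim that any candidate swap $\tdP'$ which does \emph{not} bring a new index of $\supp(\B P^*)$ into the support can improve the objective by at most $O(\bar\sigma U)$, and hence that the greedy minimizer must enlarge $\supp(\B P^*)\cap\supp(\B P^{(k)})$ at every iteration where the inclusion fails. This claim is false, and the obstruction is the algorithm's own history. By iteration $k$, $\supp(\B P^{(k)})$ generally contains ``collateral'' indices outside $\supp(\B P^*)$ carrying errors as large as $U$: for instance, the very first greedy step may pair some $i_1\in\supp(\B P^*)$ with a $j\notin\supp(\B P^*)$ whose value $y_j$ happens to be close to $(\B P^*\B y)_{i_1}$, nearly eliminating the error at $i_1$ but planting an error of size at least $L$ at coordinate $j$. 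At later iterations, swaps that merely rearrange such collateral coordinates---touching no new index of $\supp(\B P^*)$---can decrease the objective by $\Theta(L^2)$ or even $\Theta(U^2)$, which is not dominated by the $L^2/2$ gain guaranteed by Lemma~\ref{lemma: one-step} (note $U\ge L$, and in the noiseless case $\bar\sigma=0$ your bound would assert these swaps give \emph{zero} improvement). So the greedy step can legitimately spend iterations on such repairs, and the induction, which needs the overlap to grow at every failing iteration, collapses. A second, unaddressed issue: your counting also requires that the overlap never shrinks---an index of $\supp(\B P^*)$, once included, could in principle be swapped back to a fixed point at a later iteration---and nothing in your argument rules this out.

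This is exactly why the paper takes an aggregate, potential-function route instead of a per-step progress claim. Lemma~\ref{lemma: large decrease in past iterations} (whose proof needs the nontrivial $\ell_\infty$ non-expansion result, Lemma~\ref{lemma: Non-expansiveness of infinity norm}, to propagate the bound $\|\B P^{(t)}\B y - \B P^*\B y\|_\infty \ge L$ backwards to all earlier iterations) shows that as long as some index of $\supp(\B P^*)$ is still missing at iteration $k$, \emph{every} iteration $t\le k-1$ decreases the surrogate potential $\|\B P^{(t)}\B y-\B P^*\B y\|^2$ by at least $L^2/5$---regardless of which coordinates the greedy swap touches. Since this potential starts at $\le rU^2$, is nonnegative, and $R\ge 10C_1rU^2/L^2$ by Assumption~\ref{ass-main0}~(2), the inclusion cannot still be failing at any iteration $T\ge R/2$; taking $T$ to be the \emph{first} such failure makes the hypotheses of Lemma~\ref{lemma: large decrease in past iterations} available and yields the contradiction. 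In short, the paper tolerates wasteful swaps and charges them against a potential budget; your proof forbids them, and they can occur.
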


Proposition~\ref{prop: support-inclusion} states that the support of $\B P^*$ will be contained within the support of $\B P^{(k)}$ after at most $R/2$ iterations. 
Intuitively, this result is because of Assumption \ref{ass-main0} (1), which assumes that the mismatches represented by $\B P^*$ have ``strong signal". 
Proposition \ref{prop: support-inclusion} is also useful for the proofs of the main theorems below 
(e.g., see Claim \ref{claim1} in the proof of Theorem \ref{theorem: main result} for details). 

We now present some additional assumptions required for the results that follow.  
\begin{assumption}\label{ass-main}
	Let $\rho_n$ and $\bar\sigma$ be parameters appearing in Assumption \ref{ass-main0}.
	\\
	(1) Suppose $R^2 \rho_n \le 1/10$. \\
	(2) There is a constant $\sigma\ge 0$ such that $\bar \sigma^2  \le \sigma^2 \min \{n/(660R^2),$ $ n/(5dR)\}$, and 
	\begin{equation}\label{ineq-ass2}
	\|\tdH \B \ep \|^2 \ge (1/2)n \sigma^2.
	\end{equation}
\end{assumption}
In light of the discussion following Assumption~\ref{ass-main0}, Assumption \ref{ass-main} (1) places a stricter condition on the size of $n$ via the requirement $R^2 \rho_n \le 1/10$. If $\rho_n =  \Theta(d\log (n)/n)$, then we would need $n/\log(n) =\Omega( dr^2)$, which is stronger than the condition $n/\log(n) = \Omega(dr)$ needed in 
Assumption~\ref{ass-main0}. 

Assumption~\ref{ass-main}~(2) imposes a lower bound on $\|\tdH \B\ep\|$ -- this can be equivalently viewed as an upper bound on $\| \B H \B\ep \| $, in addition to the upper bound appearing in Assumption~\ref{ass-main0}~(4). 
Section~\ref{subsection: sufficient conditions for assumptions} provides a sufficient condition for Assumption~\ref{ass-main}~(2) to hold with high probability. In particular, in the noiseless case ($\B\ep = 0$),  Assumption~\ref{ass-main0}~(4) and  
Assumption~\ref{ass-main}~(2) hold with $\bar\sigma = \sigma = 0$.

We now state the first convergence result.

\begin{theorem}\label{theorem: main result}
	(Linear convergence of objective up to noise level)
	Suppose Assumptions \ref{ass-main0} and \ref{ass-main} hold with $R$ being an even number. Let $\{\B P^{(k)}\}_{k \geq 0}$ be the permutation matrices generated by Algorithm \ref{alg: AFW}. Then 
	for any $k\ge 0$, we have 
	\begin{equation}\label{upper-bound-obj}
	\| \tdH \B P^{(k)} \B y \|^2 \le\Big(1- \frac{1}{18R}\Big)^k \| \tdH \B P^{(0)} \B y \|^2 + 36 \|\tdH \B \ep \|^2  \ . 
	\end{equation}
	
\end{theorem}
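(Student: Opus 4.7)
Proof plan:

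The strategy is to reduce the analysis of the noisy objective $\|\tdH \B P^{(k)} \B y\|^2$ to the clean objective $\Phi(\B P) := \|\B P \B y - \B P^* \B y\|^2$ via the identity $\tdH \B P \B y = \tdH(\B P - \B P^*) \B y + \tdH \B \ep$, which follows from $\tdH \B X = \B 0$ and $\B P^* \B y = \B X \B \be^* + \B \ep$. Under the restricted eigenvalue Assumption \ref{ass-main0}(3), $\B u \mapsto \tdH \B u$ is near-isometric on sparse vectors, so the two objectives are comparable up to $\|\tdH \B \ep\|^2$, and a contraction on $\Phi$ transfers to a contraction on $\|\tdH \B P \B y\|^2$ plus a noise floor.

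I would first establish a one-step contraction for $\Phi$. For $k \ge R/2$, Proposition \ref{prop: support-inclusion} gives $\supp(\B P^*) \subseteq \supp(\B P^{(k)})$, so $\B v^{(k)} := \B P^{(k)} \B y - \B P^* \B y$ is supported in $\supp(\B P^{(k)})$ and $\|\B v^{(k)}\|_0 \le R$. Lemma \ref{lemma: one-step} with $m = R$ supplies a transposition $\tdP$ at swap-distance $2$ from $\B P^{(k)}$ satisfying $\Phi(\tdP) \le (1 - 1/(2R))\Phi(\B P^{(k)})$, whose support-inclusion clause $\supp(\tdP(\B P^*)^{-1}) \subseteq \supp(\B P^{(k)}(\B P^*)^{-1})$, combined with Proposition \ref{prop: support-inclusion}, is used to argue that $\tdP$ is a feasible candidate in the search region of \eqref{update} (i.e., $\dist(\tdP, \B I_n) \le R$).

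Once $\tdP$ is secured as a feasible witness, optimality of $\B P^{(k+1)}$ in \eqref{update} gives $\|\tdH \B P^{(k+1)} \B y\|^2 \le \|\tdH \tdP \B y\|^2$. Expanding the latter as $\|\tdH \tilde{\B v} + \tdH \B \ep\|^2$ with $\tilde{\B v} := \tdP \B y - \B P^* \B y$, I would use (i) $\|\tdH \tilde{\B v}\|^2 \le \|\tilde{\B v}\|^2 = \Phi(\tdP)$ since $\tdH$ is a projection; (ii) the sparsity-based bound $|\langle \tdH \tilde{\B v}, \tdH \B \ep\rangle| = |\langle \tilde{\B v}, \tdH \B \ep\rangle| \le \|\tilde{\B v}\|_1 \|\tdH \B\ep\|_\infty \le \sqrt{R}\bar\sigma \|\tilde{\B v}\|$ via Assumption \ref{ass-main0}(4), split by Young's inequality; and (iii) the reverse near-isometry $(1 - 2R\rho_n)\Phi(\B P^{(k)}) \le \|\tdH \B v^{(k)}\|^2 \le 2\|\tdH \B P^{(k)} \B y\|^2 + 2\|\tdH \B\ep\|^2$, with $2R\rho_n$ small by Assumption \ref{ass-main}(1), to arrive at a one-step recursion
\[
\|\tdH \B P^{(k+1)} \B y\|^2 \le \Big(1 - \tfrac{1}{18R}\Big)\|\tdH \B P^{(k)} \B y\|^2 + c\, R \bar\sigma^2,
\]
with the per-step noise budget $c R \bar\sigma^2$ in turn bounded by a small multiple of $R^{-1}\|\tdH \B\ep\|^2$ via the comparison $R^2 \bar\sigma^2 \lesssim \|\tdH \B \ep\|^2$ supplied by Assumption \ref{ass-main}(2). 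Iterating and summing $\sum_j (1 - 1/(18R))^j \le 18R$ yields the stated $36 \|\tdH \B\ep\|^2$ noise floor. For $k < R/2$, I would invoke monotonicity $\|\tdH \B P^{(k+1)} \B y\|^2 \le \|\tdH \B P^{(k)} \B y\|^2$ (since $\B P^{(k)}$ is itself a feasible candidate at step $k+1$) and show that the residual gap $(1-(1-1/(18R))^k)\|\tdH \B P^{(0)} \B y\|^2$ is dominated by $36\|\tdH \B \ep\|^2$ using Assumption \ref{ass-main}(2).

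The principal obstacle is keeping the $1/(2R)$ contraction alive in the presence of noise, which reduces to a tight control of the cross term $\langle \tdH \tilde{\B v}, \tdH \B\ep\rangle$ without a $\sqrt n$ loss; the key is to exploit sparsity of $\tilde{\B v}$ together with $\|\tdH \B \ep\|_\infty \le \bar\sigma$, so the cross term scales like $\sqrt R\, \bar\sigma \|\tilde{\B v}\|$, and then to calibrate Young's inequality so that the resulting $R\bar\sigma^2$ error sits strictly inside the allowed noise budget via Assumption \ref{ass-main}(2). A secondary subtlety is verifying feasibility of $\tdP$, which requires combining the support structure of the swap in Lemma \ref{lemma: one-step} with the inclusion $\supp(\B P^*) \subseteq \supp(\B P^{(k)})$ from Proposition \ref{prop: support-inclusion}, since a naive support count only gives $\dist(\tdP,\B I_n) \le R+r$.
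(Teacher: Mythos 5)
Your plan follows the same skeleton as the paper's proof (reduce to the clean objective via $\tdH\B P^*\B y = \tdH\B\ep$, get a swap witness from Lemma \ref{lemma: one-step}, certify feasibility via support inclusion, convert back with the RE condition, calibrate cross terms against Assumption \ref{ass-main}), but it has a genuine gap in the treatment of the early iterations $k < R/2$. There you only have monotonicity, and you propose to close the argument by showing that the residual gap $\bigl(1-(1-\tfrac{1}{18R})^k\bigr)\|\tdH\B P^{(0)}\B y\|^2$ is dominated by $36\|\tdH\B\ep\|^2$. No assumption supplies such a domination, and it is false in general: in the noiseless case ($\B\ep=\B 0$, which satisfies all assumptions with $\bar\sigma=\sigma=0$) the right-hand side is $0$, while $\|\tdH\B P^{(0)}\B y\|^2 = \|\tdH(\B y-\B P^*\B y)\|^2 \ge (1-R\rho_n)\|\B y-\B P^*\B y\|^2 \ge (1-R\rho_n)L^2 > 0$ whenever $r\ge 1$, by Assumption \ref{ass-main0} (1) and (3). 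So the theorem really requires a contraction at \emph{every} iteration, including $k<R/2$, and your argument cannot produce one there (it also cannot use Proposition \ref{prop: support-inclusion}, which only kicks in at $k\ge R/2$). The paper's fix is the two-case feasibility argument of Claim \eqref{claim2}: for $k\le R/2-1$, since each local-search step moves at most distance $2$ and $\B P^{(0)}=\B I_n$, one has $\dist(\B P^{(k)},\B I_n)\le 2k\le R-2$, so the Lemma \ref{lemma: one-step} witness is automatically in $\cN_R(\B I_n)$ with no support information needed; support inclusion is invoked only for $k\ge R/2$. Note also that without support inclusion the sparsity level must be taken as $m\le R+r\le 1.1R$ (the paper's $a=2.2$), not your $m=R$; this only changes constants.

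A secondary problem is your step (iii): the split $\|\tdH\B v^{(k)}\|^2 \le 2\|\tdH\B P^{(k)}\B y\|^2 + 2\|\tdH\B\ep\|^2$ cannot be fed into the recursion, since the factor $2$ makes the leading coefficient roughly $2(1-\tfrac{1}{2R})>1$ and no contraction survives. You must keep $\|\tdH\B v^{(k)}\|^2 = \|\tdH\B P^{(k)}\B y - \tdH\B\ep\|^2$ exact, expand the square, and absorb the cross term $\langle\tdH\B P^{(k)}\B y,\tdH\B\ep\rangle$ into small multiples of $\|\tdH\B P^{(k)}\B y\|^2$, $\|\tdH\tdP^{(k)}\B y\|^2$ and $\|\tdH\B\ep\|^2$; this is precisely what the paper's Claim \eqref{claim-bound} together with \eqref{ineq-c3} accomplishes. (There the noise interaction is routed through the $2$-sparse swap vector $\B z=\tdP^{(k)}\B y-\B P^{(k)}\B y$, giving a $\bar\sigma\|\B z\|$ bound rather than your $\sqrt{R}\,\bar\sigma\|\tilde{\B v}\|$; both calibrations ultimately rest on Assumption \ref{ass-main} (2), so your cross-term idea is salvageable, but the factor-$2$ step is not.)
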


In the special (noiseless) setting when $\B \ep = \B 0$, 
Theorem \ref{theorem: main result} establishes that the sequence of objective values generated by Algorithm~\ref{alg: AFW} converges to zero i.e., the optimal objective value, at a linear rate. The parameter for the linear rate of convergence depends upon the search width $R$. 
Following the discussion after Assumption~\ref{ass-main}, 
the sample-size requirement $n/\log(n) = \Omega(dr^2)$ is more stringent than that needed in order for the model to be identifiable ($n\ge 2d$)~\cite{unnikrishnan2018unlabeled} in the noiseless setting. In particular, when $ n/(d\log n) =  O(1)$, the number of mismatched pairs $r$ needs to be bounded by a constant.  
Numerical evidence presented in Section~\ref{sec:expts} (for the noiseless case) appears to suggest that the sample size $n$ needed to recover $\B P^*$ is smaller than what is suggested by our theory.

In the noisy case (i.e. $\B \ep\neq \B 0$), the bound~\eqref{upper-bound-obj} provides an upper bound on the objective value consisting of two terms. The first term converges to $0$ with a linear rate similar to the noiseless case. The second term is a constant multiple of the squared norm of the unavoidable noise term\footnote{Recall that the objective value at $\B P = \B P^*$ is $ \| \tdH \B P^* \B y \|^2= \| \tdH(\B X \B \beta^* + \B \ep) \|^2= \| \tdH \B \ep\|^2$.}: $ \| \tdH \B \ep \|^2 $. 
In other words, 
Algorithm \ref{alg: AFW} finds a solution whose objective value is at most a constant multiple of the objective value at the true permutation $\B P^*$.

	Theorem \ref{theorem: main result} proves a convergence guarantee on the objective value. The next result provides upper bounds on the $\ell_{\infty}$-norm of the mismatched entries i.e., $\| \B P^{(k)} \B y - \B P^* \B y  \|_{\infty}$. 
	For any $\B Q \in \Pi_n$, define 
	\begin{equation}
	G(\B Q) ~ := ~\| \tdH \B Q \B y \|^2 - \min_{\B P \in \cN_2(\B Q)\cap \cN_R(\B I_n)}  \| \tdH \B P \B y \|^2
	\end{equation}
	that is, $G(\B Q) $ is the decrease in the objective value after one step of local search starting at $\B Q$. For the permutation matrices 
	$\{\B P^{(k)}\}_{k \geq 0}$ generated by Algorithm \ref{alg: AFW}, 
	we know $G(\B P^{(k)}) = \| \tdH \B P^{(k)} \B y \|^2 - \| \tdH \B P^{(k+1)} \B y \|^2 $.

	\begin{theorem}\label{theorem: inf-norm-bound}
		($\ell_\infty$-bound on mismatched pairs)
		Suppose Assumptions~\ref{ass-main0} and \ref{ass-main} hold, and let $\{\B P^{(k)}\}_{k \geq 0}$ be the permutation matrices generated by 
		Algorithm~\ref{alg: AFW}. 
		Then for all $k\ge 0$ it holds 
		\begin{equation}\label{ineq: inf-norm-bound}
		\| \B P^{(k)} \B y - \B P^* \B y  \|_{\infty}^2 \le 800 \bar\sigma^2 + 10G(\B P^{(k)}) \ . \nonumber
		\end{equation}
	\end{theorem}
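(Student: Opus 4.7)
The plan is to bound $\|\B v^{(k)}\|_\infty^2$ (where $\B v^{(k)} := \B P^{(k)}\B y - \B P^*\B y$) by comparing the objective at $\B P^{(k)}$ with that at a carefully chosen neighbor $\wtd{\B P}\in \cN_2(\B P^{(k)})\cap \cN_R(\B I_n)$. Because $\wtd{\B P}$ is feasible for the local-search step, $G(\B P^{(k)})\ge \|\tdH\B P^{(k)}\B y\|^2-\|\tdH\wtd{\B P}\B y\|^2$, and the task reduces to lower-bounding this gap by $\|\B v^{(k)}\|_\infty^2$ up to a noise correction. The candidate $\wtd{\B P}$ comes from Lemma~\ref{lemma: one-step}, which delivers the ``clean'' decrease $\|\B v^{(k)}\|^2 - \|\wtd{\B v}\|^2 \ge \tfrac{1}{2}\|\B v^{(k)}\|_\infty^2$ (with $\wtd{\B v}:=\wtd{\B P}\B y-\B P^*\B y$). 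The transposition underlying the lemma fixes the largest-magnitude coordinate of $\B v^{(k)}$, so $\B w := \B v^{(k)} - \wtd{\B v}$ satisfies $\|\B w\|_0\le 2$ and $\|\B w\|_\infty\le \|\B v^{(k)}\|_\infty$.

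The first subtle point is ensuring $\wtd{\B P}\in \cN_R(\B I_n)$, which I handle with a case split. For $k\le R/2-1$, iterating $\dist(\B P^{(j+1)},\B P^{(j)})\le 2$ from $\B P^{(0)}=\B I_n$ gives $\dist(\B P^{(k)},\B I_n)\le 2k\le R-2$, and hence $\dist(\wtd{\B P},\B I_n)\le R$ directly. For $k\ge R/2$, Proposition~\ref{prop: support-inclusion} yields $\supp(\B P^*)\subseteq \supp(\B P^{(k)})$; combining this with the inclusion $\supp(\wtd{\B P}(\B P^*)^{-1})\subseteq \supp(\B P^{(k)}(\B P^*)^{-1})$ from Lemma~\ref{lemma: one-step} together with the elementary identity $\supp(\B Q)\subseteq \supp(\B P^*)\cup \supp(\B Q(\B P^*)^{-1})$ (valid for any $\B Q\in\Pi_n$), one obtains $\supp(\wtd{\B P})\subseteq \supp(\B P^{(k)})$, whence $\dist(\wtd{\B P},\B I_n)\le R$.

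Next, since $\tdH\B X = 0$ gives $\tdH\B P^*\B y = \tdH\B\ep$, expansion yields
\begin{equation*}
\|\tdH\B P^{(k)}\B y\|^2 - \|\tdH\wtd{\B P}\B y\|^2 = \bigl(\|\tdH\B v^{(k)}\|^2 - \|\tdH\wtd{\B v}\|^2\bigr) + 2\langle \tdH\B\ep,\tdH\B w\rangle.
\end{equation*}
For the deterministic piece I use $\|\tdH\B u\|^2 = \|\B u\|^2 - \|\B H\B u\|^2$ together with $\B v^{(k)},\wtd{\B v}\in \cB_{2R}$ (both $\B P^{(k)},\wtd{\B P}$ lie in $\cN_R(\B I_n)$ and $\B P^*\in\cN_r(\B I_n)$). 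The restricted-eigenvalue inequality of Assumption~\ref{ass-main0}(3) then gives $\|\B H\B v^{(k)}\|^2, \|\B H\wtd{\B v}\|^2 \le R\rho_n \|\B v^{(k)}\|^2$, so the ``RE slack'' satisfies $|\|\B H\B v^{(k)}\|^2 - \|\B H\wtd{\B v}\|^2| \le R\rho_n\|\B v^{(k)}\|^2 \le 2R^2\rho_n\|\B v^{(k)}\|_\infty^2$ via $\|\B v^{(k)}\|^2\le 2R\|\B v^{(k)}\|_\infty^2$; by Assumption~\ref{ass-main}(1) this is at most $\tfrac{1}{5}\|\B v^{(k)}\|_\infty^2$, and combining with Lemma~\ref{lemma: one-step} gives $\|\tdH\B v^{(k)}\|^2-\|\tdH\wtd{\B v}\|^2 \ge c\|\B v^{(k)}\|_\infty^2$ for an absolute constant $c>0$. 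For the noise cross-term, the key identity $\B H\tdH = 0$ yields $\langle \tdH\B\ep,\tdH\B w\rangle = \langle \tdH\B\ep,\B w\rangle$ (since $\B H\B w$ lies in the range of $\B H$), so that H\"older's inequality and Assumption~\ref{ass-main0}(4) give $|\langle \tdH\B\ep,\B w\rangle|\le \|\tdH\B\ep\|_\infty\|\B w\|_1 \le 2\bar\sigma\|\B v^{(k)}\|_\infty$.

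Assembling the estimates produces a quadratic inequality of the form $c\|\B v^{(k)}\|_\infty^2 - C'\bar\sigma\|\B v^{(k)}\|_\infty \le G(\B P^{(k)})$; solving for $\|\B v^{(k)}\|_\infty$ and applying $(a+b)^2\le 2a^2+2b^2$ delivers the claimed bound $\|\B v^{(k)}\|_\infty^2 \le 800\bar\sigma^2 + 10 G(\B P^{(k)})$, with the specific constants absorbing the loose numerical factors in the RE and cross-term estimates. The main obstacle is keeping the RE slack dominated by Lemma~\ref{lemma: one-step}'s gain $\tfrac12\|\B v^{(k)}\|_\infty^2$—this is precisely what Assumption~\ref{ass-main}(1) ($R^2\rho_n\le 1/10$) is tailored for—and the identity $\B H\tdH=0$ is essential in the cross-term, as a naive bound would pick up $\|\B\ep\|_2$ instead of $\|\tdH\B\ep\|_\infty$ and destroy the scaling.
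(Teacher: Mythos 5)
Your overall route is the same as the paper's: compare the objective at $\B P^{(k)}$ with the objective at the swap $\wtd{\B P}$ produced by Lemma~\ref{lemma: one-step}, establish feasibility $\wtd{\B P}\in\cN_2(\B P^{(k)})\cap\cN_R(\B I_n)$ via the case split on $k\le R/2-1$ versus $k\ge R/2$ (Proposition~\ref{prop: support-inclusion} plus the support-inclusion argument, which is exactly the paper's Claim~\eqref{claim2} and Lemma~\ref{lemma: supp-inclusion}), control the $\B H$-terms by the RE condition and $R^2\rho_n\le 1/10$, and finish with a quadratic inequality in $\|\B v^{(k)}\|_\infty$. Your bookkeeping of the deterministic piece (bounding $|\|\B H\B v^{(k)}\|^2-\|\B H\wtd{\B v}\|^2|$ directly, rather than the paper's cross-term $2|\langle\B H\B v^{(k)},\B H\B z\rangle|$) is a cosmetic variation and gives the same constant $3/10$.

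There is, however, one step that is not justified and is in fact false as stated: the claim that ``the transposition underlying the lemma fixes the largest-magnitude coordinate of $\B v^{(k)}$, so $\|\B w\|_\infty\le\|\B v^{(k)}\|_\infty$.'' Lemma~\ref{lemma: one-step} asserts nothing about the magnitude of the swap, and its construction does not satisfy this bound in general. In the normalization $\B P^*=\B I_n$, the swap sends coordinate $i$ (where $|y_{i_+}-y_i|=\|\B v^{(k)}\|_\infty$, $y_{i_+}>y_i$) to a downcrossing index $j$ with $y_{j_+}\le\tfrac{y_i+y_{i_+}}{2}\le y_j$, and the resulting swap magnitude is $\|\B w\|_\infty=|y_{i_+}-y_{j_+}|$. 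In the paper's Case~2 ($y_{i_+}\ge y_j\ge y_i\ge y_{j_+}$) one can have $y_{j_+}$ strictly below $y_i$ (e.g.\ $y_i=0$, $y_{i_+}=1$, $y_j=0.6$, $y_{j_+}=-0.35$, all consistent with $\|\B v^{(k)}\|_\infty=1$), which gives $\|\B w\|_\infty=1.35>\|\B v^{(k)}\|_\infty$; the construction only guarantees $\|\B w\|_\infty\le\tfrac32\|\B v^{(k)}\|_\infty$. The repair is to derive the magnitude bound from the decrease inequality itself rather than from the construction: since $\|\wtd{\B v}\|^2\le\|\B v^{(k)}\|^2$ and $\B w=a(\B e_i-\B e_j)$, expanding $\|\B v^{(k)}-\B w\|^2\le\|\B v^{(k)}\|^2$ gives $\|\B w\|^2\le 2\langle\B v^{(k)},\B w\rangle\le 2\sqrt2\,\|\B w\|\,\|\B v^{(k)}\|_\infty$, hence $\|\B w\|_\infty\le 2\|\B v^{(k)}\|_\infty$ — this is precisely the paper's inequality~\eqref{ineq-new4}. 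With this weaker bound your noise cross-term becomes $8\bar\sigma\|\B v^{(k)}\|_\infty$ instead of $4\bar\sigma\|\B v^{(k)}\|_\infty$, and the quadratic inequality $\tfrac{3}{10}t^2-8\bar\sigma t\le G(\B P^{(k)})$ still yields $t^2\le 800\bar\sigma^2+10\,G(\B P^{(k)})$ via $8\bar\sigma t\le\tfrac15 t^2+80\bar\sigma^2$, so the stated constants survive the fix.
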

	Theorem \ref{theorem: inf-norm-bound} states that the largest squared error of the mismatched pairs (i.e., $\| \B P^{(k)} \B y - \B P^* \B y  \|_{\infty}^2$) is bounded above by a constant multiple of the one-step decrease in objective value (i.e. $G(\B P^{(k)})$) plus a term comparable to the noise level $O(\bar \sigma^2)$. In particular, if Algorithm \ref{alg: AFW} is terminated at an iteration $\B P^{(k)}$ with $ G(\B P^{(k)})$
	of the order of $\bar{\sigma}^2$, 
	then $\| \B P^{(k)} \B y - \B P^* \B y  \|^2_{\infty} $ is bounded by a constant multiple of $  \bar\sigma^2$.

	Note that the constant $800$ in \eqref{ineq: inf-norm-bound} is conservative and may be improved with a careful adjustment of the constants appearing in the proof and in the assumptions.

	In light of Theorem \ref{theorem: inf-norm-bound}, we can prove an upper bound on the estimation error of $\B \beta^*$, using an additional assumption stated below. 
	
	\begin{assumption}\label{ass-main2}
		There exists a constant $\bar\ga >0$ such that 
		\begin{equation}\label{ineq-ass3}
		(1) \quad \lam_{\min} \Big(\frac{1}{n} \B X^\T \B X\Big) \ge \bar\ga~~~\quad (2) \quad 
		\| (\B X^\T \B X)^{-1} \B X^\T \B\ep \| \le  \bar\sigma \sqrt{\frac{d}{n\bar\ga}} \ .
		\end{equation}
		where $\bar\sigma$ is as defined in Assumption \ref{ass-main0}. 
	\end{assumption}
	Section \ref{subsection: sufficient conditions for assumptions} presents conditions on $\B{X},\B{\epsilon}$ under which Assumption \ref{ass-main2} is satisfied with high probability.

	\begin{theorem}\label{theorem: consistency}
		(Estimation error)
		Suppose Assumptions \ref{ass-main0}, \ref{ass-main} and \ref{ass-main2} hold. Suppose iteration $k$ of Algorithm \ref{alg: AFW} satisfies $G(\B P^{(k)}) \le c\bar\sigma^2$ for a constant $c>0$. Let $\B \beta^{(k)} := (\B X^\T \B X)^{-1} \B X^\T  \B P^{(k)}  \B y$ and denote $\bar c:= 800+ 10c$. 
		Then we have 
		\begin{equation}\label{new-bound-1}
		\| \B\beta^{(k)} - \B\beta^* \|^2 ~\le~ 4\bar\ga^{-1}\bar c
		\frac{R  \bar\sigma^2 }{n}   + 2 \bar\ga^{-1}  \frac{d \bar\sigma^2}{n}
		\end{equation}
		and 
		\begin{equation}\label{new-bound-2}
		\frac{1}{n} \Big\| ( \B P^{(k)} )^{-1} \B X \B\beta^{(k)} - (\B P^*)^{-1} \B X \B\beta^* \Big\|^2 ~\le ~  2 (\sqrt{2\bar c}+3)^2 \frac{R\bar\sigma^2}{n} + \frac{2d\bar\sigma^2}{n} \ .
		\end{equation}
	\end{theorem}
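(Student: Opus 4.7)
The plan is to reduce both bounds to three elementary pieces already controlled by earlier results. Writing $\B v := \B P^{(k)}\B y - \B P^*\B y$, the key observation is that $\supp(\B v) \subseteq \supp(\B P^{(k)})\cup\supp(\B P^*)$ has size at most $R+r \le 2R$; combining this with the $\ell_\infty$-bound $\|\B v\|_\infty^2\le 800\bar\sigma^2 + 10 G(\B P^{(k)})\le \bar c\bar\sigma^2$ from Theorem~\ref{theorem: inf-norm-bound} yields
\begin{equation*}
\|\B v\|^2 \;\le\; 2R\,\bar c\,\bar\sigma^2.
\end{equation*}
Everything else follows from two simple algebraic decompositions together with the noise/eigenvalue controls in Assumptions~\ref{ass-main0}~(4) and~\ref{ass-main2}.

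For \eqref{new-bound-1}, substituting $\B P^{(k)}\B y = \B X\B\beta^* + \B\ep + \B v$ (which follows from $\B P^*\B y = \B X\B\beta^* + \B\ep$) into the definition of $\B\beta^{(k)}$ gives
$\B\beta^{(k)} - \B\beta^* = (\B X^\T\B X)^{-1}\B X^\T\B\ep + (\B X^\T\B X)^{-1}\B X^\T\B v$. Assumption~\ref{ass-main2}~(2) bounds the squared norm of the first term by $d\bar\sigma^2/(n\bar\ga)$. For the second, the operator-norm identity $\interleave(\B X^\T\B X)^{-1}\B X^\T\itl_2^2 = 1/\lam_{\min}(\B X^\T\B X)$ combined with Assumption~\ref{ass-main2}~(1) gives $\|(\B X^\T\B X)^{-1}\B X^\T\B v\|^2 \le \|\B v\|^2/(n\bar\ga)\le 2R\bar c\bar\sigma^2/(n\bar\ga)$. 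Applying $(a+b)^2\le 2(a^2+b^2)$ delivers \eqref{new-bound-1}.

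For \eqref{new-bound-2}, using $\B X\B\beta^{(k)} = \B H\B P^{(k)}\B y$ and $(\B P^*)^{-1}\B X\B\beta^* = \B y - (\B P^*)^{-1}\B\ep$, a direct calculation (via $\B y = (\B P^{(k)})^{-1}\B P^{(k)}\B y$) yields
\begin{equation*}
(\B P^{(k)})^{-1}\B X\B\beta^{(k)} - (\B P^*)^{-1}\B X\B\beta^* \;=\; -(\B P^{(k)})^{-1}\tdH\B P^{(k)}\B y + (\B P^*)^{-1}\B\ep.
\end{equation*}
Since $\tdH\B P^*\B y = \tdH\B\ep$, I expand $\tdH\B P^{(k)}\B y = \tdH\B v + \tdH\B\ep = \tdH\B v + \B\ep - \B H\B\ep$, so the difference rewrites as
\begin{equation*}
-(\B P^{(k)})^{-1}\tdH\B v \;+\; \bigl[(\B P^*)^{-1} - (\B P^{(k)})^{-1}\bigr]\B\ep \;+\; (\B P^{(k)})^{-1}\B H\B\ep.
\end{equation*}
Permutations preserve the $\ell_2$-norm, so I bound the three summands separately: $\|\tdH\B v\|\le\|\B v\|\le\sqrt{2R\bar c}\,\bar\sigma$; the middle vector is supported on at most $2R$ coordinates with entries bounded by $2\bar\sigma$ (since $\|\B\ep\|_\infty\le\bar\sigma$), giving $\le 2\sqrt{2R}\,\bar\sigma$; and $\|\B H\B\ep\|\le\sqrt{d}\,\bar\sigma$ by Assumption~\ref{ass-main0}~(4). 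Using $2\sqrt{2}<3$, the triangle inequality gives $\|\cdot\|\le(\sqrt{2\bar c}+3)\sqrt{R}\,\bar\sigma + \sqrt{d}\,\bar\sigma$; squaring via $(a+b)^2\le 2(a^2+b^2)$ and dividing by $n$ yields \eqref{new-bound-2}.

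The main obstacle is finding the decomposition used in the third paragraph: a naive bound $\|(\B P^*)^{-1}\B\ep\|\le \sqrt{n}\,\bar\sigma$ would introduce a spurious factor of $n$ in \eqref{new-bound-2}. Expanding $\tdH\B\ep = \B\ep - \B H\B\ep$ lets the two raw-noise vectors collapse into $[(\B P^*)^{-1}-(\B P^{(k)})^{-1}]\B\ep$, which is confined to the $\le 2R$ coordinates where the two permutations disagree; the leftover $(\B P^{(k)})^{-1}\B H\B\ep$ is then absorbed by the $\sqrt{d}\,\bar\sigma$ control from Assumption~\ref{ass-main0}~(4).
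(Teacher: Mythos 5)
Your proof is correct and takes essentially the same approach as the paper's: the bound \eqref{new-bound-1} uses the identical decomposition $\B\beta^{(k)}-\B\beta^*=(\B X^\T\B X)^{-1}\B X^\T(\B v+\B\ep)$ with the same operator-norm and Assumption~\ref{ass-main2} controls, and for \eqref{new-bound-2} your algebra (via $\B X\B\beta^{(k)}=\B H\B P^{(k)}\B y$ and $\B y=(\B P^{(k)})^{-1}\B P^{(k)}\B y$) lands on exactly the paper's three-term decomposition $\B J_1+\B J_2+\B J_3$, bounded term-by-term in the same way. The only difference is the (routine) algebraic route used to reach that decomposition, so there is nothing substantive to distinguish the two arguments.
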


	Theorem~\ref{theorem: consistency} (cf bound~\eqref{new-bound-1}) states that as long as $k$ is sufficiently large\footnote{We note that in Algorithm~\ref{alg: AFW}, as $k \rightarrow \infty$, the quantity $G(\B{P}^{(k)})\rightarrow 0$, and the condition $G(\B P^{(k)}) \le c\bar\sigma^2$ will hold for $k$ sufficiently large.}, the estimation error $\| \B\beta^{(k)} - \B\beta^* \|^2 $ is of the order $O(({r+d})\bar\sigma^2/n)$, assuming $\bar{\gamma}$ is a constant.  Therefore, as $n\rightarrow \infty$ (with $r,d$ fixed), the estimator delivered by our algorithm (after sufficiently many iterations) will converge to the true regression coefficient vector, $\B\beta^*$.
	In addition, \eqref{new-bound-2} provides an upper bound on the entrywise ``denoising error" (left hand side of \eqref{new-bound-2})---this is of the order $O((r+d)\bar\sigma^2/n)$. See \cite{pananjady2017denoising} for past works and discussions on this error metric.

	The following theorem provides an upper bound on the total number of local search steps needed to find a $\B P^{(k)}$ with $G(\B P^{(k)}) \le c \bar\sigma^2$. 
	
	\begin{theorem}\label{theorem: iteration-complexity}
		(Iteration complexity)
		Suppose Assumptions \ref{ass-main0} and \ref{ass-main} hold. Let $\{\B P^{(k)}\}_{k \geq 0}$ be the permutation matrices generated by 
		Algorithm~\ref{alg: AFW}. Given any $c>0$, define 
		\begin{equation}
		K^\dagger := \lt\lceil \log \Big( \frac{36\| \tdH \B\ep\|^2}{\| \tdH \B P^{(0)} \B y \|^2} \Big) \Big/ \log \Big( 1 - \frac{1}{18R} \Big)  ~+ ~ \frac{72n}{c}  \rt\rceil  +1  \ .
		\end{equation}
		Then there exists $0\le k \le K^\dagger$ such that $G(\B P^{(k)}) \le c\bar\sigma^2$. 
	\end{theorem}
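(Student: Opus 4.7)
The plan is to split the iteration budget into two phases: an initial \emph{burn-in} phase during which Theorem~\ref{theorem: main result} drives the objective down to the noise floor, followed by a \emph{stationarity} phase in which a telescoping / pigeonhole argument over one-step decreases forces $G(\B P^{(k)})$ to be small at some iterate. The required $K^\dagger$ is precisely (up to rounding) the sum of the two corresponding counts.

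\textbf{Phase 1 (burn-in).} First I would apply Theorem~\ref{theorem: main result} with
$$K_1 := \left\lceil \log\Big( \frac{36\|\tdH\B\ep\|^2}{\|\tdH \B P^{(0)}\B y\|^2}\Big) \Big/ \log\Big(1-\frac{1}{18R}\Big) \right\rceil.$$
The choice of $K_1$ makes $(1-1/(18R))^{K_1}\|\tdH \B P^{(0)}\B y\|^2 \le 36 \|\tdH \B\ep\|^2$, so by \eqref{upper-bound-obj},
$$\|\tdH \B P^{(K_1)} \B y\|^2 \le 72\|\tdH \B\ep\|^2.$$
(If the logarithmic expression is nonpositive, $K_1=0$ and the algorithm already starts below $72\|\tdH\B\ep\|^2$, since $\|\tdH\B P^{(0)}\B y\|^2 \le 36\|\tdH\B\ep\|^2$ by monotonicity of the objective—one may simply take $K_1=0$ in that degenerate case.)

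\textbf{Phase 2 (telescoping).} Now suppose, for contradiction, that $G(\B P^{(k)}) > c\bar\sigma^2$ for every $k$ in the range $K_1 \le k \le K^\dagger$. Since $G(\B P^{(k)})$ is exactly the decrease in objective at step $k$, telescoping gives
$$\|\tdH \B P^{(K_1)} \B y\|^2 - \|\tdH \B P^{(K^\dagger+1)} \B y\|^2 \;=\; \sum_{k=K_1}^{K^\dagger} G(\B P^{(k)}) \;>\; (K^\dagger - K_1 + 1)\, c\bar\sigma^2.$$
Since the objective is nonnegative, the left-hand side is at most $\|\tdH \B P^{(K_1)}\B y\|^2 \le 72\|\tdH\B\ep\|^2$.

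\textbf{Phase 3 (bounding the noise term).} The $\ell_\infty$ bound $\|\tdH\B\ep\|_\infty \le \bar\sigma$ from Assumption~\ref{ass-main0}(4) yields
$\|\tdH\B\ep\|^2 \le n\bar\sigma^2$, hence $72\|\tdH\B\ep\|^2 \le 72 n\bar\sigma^2$. Combining with the telescoping inequality gives
$$(K^\dagger - K_1 + 1)\, c\bar\sigma^2 \;<\; 72 n\bar\sigma^2, \qquad \text{i.e.} \qquad K^\dagger - K_1 + 1 < \frac{72n}{c}.$$
But by definition $K^\dagger \ge K_1 + \lceil 72n/c\rceil + 1$ (the $+1$ in $K^\dagger$ plus the ceiling), which contradicts the above. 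Therefore some $k \in \{K_1,\ldots,K^\dagger\}$ satisfies $G(\B P^{(k)})\le c\bar\sigma^2$.

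The argument is essentially a standard two-phase (linear convergence + averaging-style) bound, so I do not anticipate a serious obstacle; the only subtlety is the bookkeeping that turns the two phase-lengths into precisely the expression for $K^\dagger$, together with checking the $\ell_\infty$-to-$\ell_2$ conversion $\|\tdH\B\ep\|^2\le n\bar\sigma^2$ which is directly furnished by Assumption~\ref{ass-main0}(4).
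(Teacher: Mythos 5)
Your proposal is correct and follows essentially the same route as the paper: the same burn-in count $K_1$ obtained from Theorem~\ref{theorem: main result}, the same bound $\|\tdH \B P^{(K_1)}\B y\|^2 \le 72\|\tdH\B\ep\|^2 \le 72n\bar\sigma^2$ via Assumption~\ref{ass-main0}~(4), and the same telescoping/pigeonhole contradiction (the paper phrases the contradiction as the objective becoming negative rather than as a count mismatch, but this is cosmetic). One bookkeeping slip: writing $A$ for the logarithmic term and $B = 72n/c$, your claim ``$K^\dagger \ge K_1 + \lceil B\rceil + 1$'' is not valid in general, since $\lceil A+B\rceil$ can equal $\lceil A\rceil + \lceil B\rceil - 1$; however, all your contradiction needs is $K^\dagger - K_1 > B$, which does hold because $\lceil A+B\rceil \ge A+B$ and $\lceil A\rceil < A+1$ give $K^\dagger - K_1 = \lceil A+B\rceil + 1 - \lceil A\rceil > B$, so the argument goes through after replacing that one sentence.
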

	
	\begin{proof}
		Denote 
		\begin{equation}\label{K1}
		K_1 :=  \lt\lceil \log \Big( \frac{36\| \tdH \B\ep\|^2}{\| \tdH \B P^{(0)} \B y \|^2} \Big) \Big/ \log \Big( 1 - \frac{1}{18R} \Big)    \rt\rceil \ .
		\end{equation}
		Then by Theorem \ref{theorem: main result}, after $K_1$ iterations, it holds 
		\begin{equation}
		\| \tdH \B P^{(K_1)} \B y \|^2 \le     36 \| \tdH \B\ep\|^2 +  36\| \tdH \B\ep\|^2 =     72 \| \tdH \B\ep\|^2  \le 72 n \bar\sigma^2
		\end{equation}
		where the second inequality follows Assumption \ref{ass-main0} (4). 
		Suppose $G(\B P^{(k)}) > c \bar\sigma^2 $ for all $K_1 \le k \le K^\dagger-1$, then 
		\begin{equation}
		\| \tdH \B P^{(K^\dagger)} \B y \|^2 =   	\| \tdH \B P^{(K_1)} \B y \|^2 - \sum_{k=K_1}^{K^{\dagger}-1} G(\B P^{(k)}) <  72 n \bar\sigma^2   -  \frac{72n}{c}  c \bar\sigma^2 = 0 \ , \nonumber
		\end{equation}
		which is a contradiction. So there must exist some $K_1 \le k \le K^\dagger-1$ such that $G(\B P^{(k)}) \le c \bar\sigma^2 $. 
	\end{proof}
	
	Note that if $R$ and $\| \tdH \B\ep\|^2/ \| \tdH \B P^{(0)} \B y \|^2$ are bounded by a constant, then
	the number of iterations $K^\dagger= O(n)$. Therefore, in this situation, one can find an estimate $\B \beta^{(k)}$ satisfying $	\| \B\beta^{(k)} - \B\beta^* \|^2 \le O((d+r)\bar\sigma^2/n)$ within $O(n)$ iterations of Algorithm~\ref{alg: AFW}.

\subsection{Proofs of main theorems}\label{subsection: proofs of main theorems}
In this section, we present the proofs of Proposition \ref{prop: support-inclusion}, 
Theorem \ref{theorem: main result}, Theorem \ref{theorem: inf-norm-bound} and Theorem \ref{theorem: consistency}. 
We first present a technical result used in our proofs.
\begin{lemma}\label{lemma: large decrease in past iterations}
	Suppose Assumption \ref{ass-main0} holds. Let $\{\B P^{(k)}\}_{k \geq 0}$ be the permutation matrices generated by Algorithm \ref{alg: AFW}. 
	Suppose $\| \B P^{(k)} \B y - \B P^* \B y \|_{\infty} \ge L $  for some $ k\ge 1 $. Suppose at least one of the two conditions holds: (i) $k\le R/2$; or
	(ii)~$k\ge R/2 + 1$, and $\supp(\B P^*) \subseteq \supp(\B P^{(k')})$ for all $ R/2 \le k'\le k - 1$.
	Then for all $t\le k-1$, we have
	\begin{eqnarray}\label{ineq: constant decrease}
	\|  \B P^{(t+1)} \B  y - \B P^* \B y \|^2 - \|  \B P^{(t)} \B y - \B P^* \B y \|^2 \le -L^2/5 \ .
	\end{eqnarray}
\end{lemma}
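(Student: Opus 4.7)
The plan is to combine Lemma \ref{lemma: one-step} with the algorithm's greedy minimization, the restricted eigenvalue condition in Assumption \ref{ass-main0}(3), and the noise bound in Assumption \ref{ass-main0}(4) to lower-bound the per-step decrease of the approximate objective $\|\B P^{(t)} \B y - \B P^* \B y\|^2$ at each iteration $t \le k-1$.

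At each such $t$, I would invoke Lemma \ref{lemma: one-step} at $\B P^{(t)}$ to produce a candidate 2-neighbor $\tdP^{(t)}$ whose decrease in the approximate objective is at least $(1/2)\|\B P^{(t)}\B y - \B P^*\B y\|_\infty^2$. To ensure this candidate is admissible in the update rule~\eqref{update}, I would verify $\tdP^{(t)} \in \cN_R(\B I_n)$ by splitting into two cases: (a) if $t \le R/2 - 1$, then $\dist(\B P^{(t)}, \B I_n) \le 2t \le R-2$, so any 2-swap stays inside $\cN_R(\B I_n)$; (b) if $t \ge R/2$, condition (ii) gives $\supp(\B P^*) \subseteq \supp(\B P^{(t)})$, and combining this with the inclusion $\supp(\tdP^{(t)} (\B P^*)^{-1}) \subseteq \supp(\B P^{(t)}(\B P^*)^{-1})$ from Lemma \ref{lemma: one-step} yields $\supp(\tdP^{(t)}) \subseteq \supp(\B P^{(t)})$, hence $\dist(\tdP^{(t)}, \B I_n) \le R$.

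Next, I would translate the approximate-objective decrease into a true-objective decrease via $\tdH \B P^* \B y = \tdH \B \ep$ and $\|\tdH \B u\|^2 = \|\B u\|^2 - \|\B H \B u\|^2$. Writing $\B u^{(t)} := \B P^{(t)}\B y - \B P^*\B y$, $\tilde{\B u}^{(t)} := \tdP^{(t)}\B y - \B P^*\B y$, and $\B u^{(t+1)} := \B P^{(t+1)}\B y - \B P^*\B y$, the algorithmic minimality $\|\tdH \B P^{(t+1)}\B y\|^2 \le \|\tdH \tdP^{(t)}\B y\|^2$ together with Assumption \ref{ass-main0}(3) (applied to the $(2R)$-sparse $\B u^{(t+1)}, \tilde{\B u}^{(t)}$ and the 4-sparse difference $\tilde{\B u}^{(t)} - \B u^{(t+1)}$) and the bound $\|\tdH \B\ep\|_\infty \le \bar\sigma$ (for cross terms of the form $\la \tdH \B u, \tdH \B\ep\ra$ with sparse $\B u$) should yield $\|\B u^{(t+1)}\|^2 \le \|\B u^{(t)}\|^2 - (1/2)\|\B P^{(t)}\B y - \B P^*\B y\|_\infty^2 + (\text{small error})$. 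Under the scaling constants $R\rho_n \le L^2/(90 U^2)$ and $\bar\sigma \le \min\{0.5,(\rho_n d)^{-1/2}\} L^2/(80U)$ in Assumption \ref{ass-main0}, these error terms absorb to yield the desired $L^2/5$ decrease, provided $\|\B P^{(t)}\B y - \B P^*\B y\|_\infty \ge L$.

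The main obstacle is establishing $\|\B P^{(t)}\B y - \B P^*\B y\|_\infty \ge L$ at every $t \le k-1$. Assumption \ref{ass-main0}(1) gives this whenever $\supp(\B P^*) \not\subseteq \supp(\B P^{(t)})$, since any $i \in \supp(\B P^*) \setminus \supp(\B P^{(t)})$ contributes $|(\B P^*\B y)_i - y_i| \ge L$ to the $\ell_\infty$ norm. When $\supp(\B P^*) \subseteq \supp(\B P^{(t)})$ (as condition (ii) asserts for $t \ge R/2$), a more delicate argument is needed; I would proceed by contradiction: if $\|\B P^{(t)}\B y - \B P^*\B y\|_\infty < L$ at some $t \le k-1$, the resulting bound on the approximate objective at time $t$, propagated forward via the monotonicity of the true objective and the RE approximation, would contradict the hypothesis $\|\B P^{(k)}\B y - \B P^*\B y\|_\infty \ge L$ at the later time $k$.
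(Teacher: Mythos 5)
Your first three steps track the paper's own proof closely: the candidate swap from Lemma \ref{lemma: one-step}, the two-case admissibility argument (distance counting for $t\le R/2-1$, support inclusion via Lemma \ref{lemma: supp-inclusion} otherwise) is exactly the paper's Claim \eqref{claim3}, and your surrogate-to-true translation is the paper's Lemma \ref{lemma: step-bound}, with the same $L^2/5$ error budget. The genuine gap is in what you yourself call the main obstacle: proving $\| \B P^{(t)} \B y - \B P^* \B y \|_{\infty} \ge L$ for every $t\le k-1$ (the paper's Proposition \ref{prop-new}). Your proposed contradiction does not go through. If $\| \B P^{(t)} \B y - \B P^* \B y \|_{\infty} < L$, then, since $\B P^{(t)} \B y - \B P^* \B y$ has at most $R+r$ nonzero entries, all you can conclude is that the surrogate objective at time $t$ is below $(R+r)L^2$; meanwhile $\| \B P^{(k)} \B y - \B P^* \B y \|_{\infty} \ge L$ only forces the surrogate objective at time $k$ to be at least $L^2$. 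These two facts are perfectly compatible with the monotone decrease of the true objective plus the RE approximation (already in the noiseless case the chain of inequalities reduces to $(1-R\rho_n)L^2 \le (R+r)L^2$, which is vacuous), so no contradiction can be extracted by propagating objective values forward in time. The hypothesis at time $k$ is about a \emph{single coordinate}, and the $\ell_\infty$ norm is not automatically monotone along the trajectory: each accepted swap decreases the true objective, but by Lemma \ref{lemma: step-bound} the surrogate $\| \B P \B y - \B P^* \B y\|^2$ may \emph{increase} by up to $L^2/5$ at each step, so a priori a swap could create an entry of size $\ge L$ out of entries that were all below $L$.

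The paper closes this gap with three ingredients that are absent from your sketch. First, it traces the specific index $i$ attaining $\| \B P^{(k)} \B y - \B P^* \B y \|_{\infty}\ge L$ \emph{backward} to the last iteration $j\le k-1$ at which row $i$ of the permutation changed; the offending entry, of squared magnitude $\ge L^2$, is created at step $j\to j+1$ and survives untouched up to time $k$. Second, since Lemma \ref{lemma: step-bound} caps the surrogate change of that accepted step by $L^2/5$, the two entries destroyed by that swap must have had combined squared magnitude exceeding $(4/5)L^2$, whence $\| \B P^{(j)} \B y - \B P^* \B y \|_{\infty}^2 > (2/5)L^2$; feeding this into Lemma \ref{lemma: one-step} (plus admissibility) shows the step-$j$ surrogate change is strictly negative. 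Third, the purely combinatorial Lemma \ref{lemma: Non-expansiveness of infinity norm} (proved by an exhaustive ordering case analysis) states that a $2$-swap which strictly decreases $\| \B Q \B y^* - \B y^* \|^2$ cannot increase its $\ell_\infty$ norm; this transfers the bound $\ge L$ from time $j+1$ (where it holds because row $i$ no longer changes) back to time $j$, and a backward induction then reaches every $t\le k-1$. Without the index tracing, the $(2/5)L^2$ accounting, and the $\ell_\infty$ non-expansiveness lemma, the bound you need at each $t$ remains unproved, and the remainder of your argument---which is explicitly conditional on $\| \B P^{(t)} \B y - \B P^* \B y \|_{\infty} \ge L$---cannot be completed.
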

The proof of Lemma~\ref{lemma: large decrease in past iterations} is presented in Section~\ref{section: proof lemma large decrease in past iterations}. 
As mentioned earlier, our analysis makes use of the one-step decrease condition in Lemma~\ref{lemma: one-step}. 
Note however, if the permutation matrix at the current iteration, denoted by $\B P^{(k)}$, is on the boundary, i.e. $\dist(\B P^{(k)}, \B I_n) = R$, it is not clear whether the permutation found by Lemma \ref{lemma: one-step} is within the search region $\cN_R(\B I_n)$. 
Lemma~\ref{lemma: large decrease in past iterations} helps address this issue (See the proof of Theorem \ref{theorem: main result} below for details).

\subsubsection{Proof of Proposition \ref{prop: support-inclusion}}

We show this result by contradiction. Suppose that there exists a $k\ge R/2$ such that $ \supp(\B P^{*}) \not\subseteq  \supp(\B P^{(k)})  $. Let $ T\ge R/2  $ be the first iteration ($\ge R/2$) such that $ \supp(\B P^{*}) \not\subseteq  \supp(\B P^{(T)})  $, i.e., 
$$
\supp(\B P^{*}) \not\subseteq  \supp(\B P^{(T)}) ~~~ {\rm and} ~~~
\supp(\B P^{*}) \subseteq  \supp(\B P^{(k)}) ~~ \forall~ R/2 \le k \le T-1 \ .
$$
Let $i\in \supp(\B P^{*})$ but $i\notin \supp(\B P^{(T)})$, then by Assumption \ref{ass-main0}~(1), we have 
$$
\| \B P^{(T)} \B y - \B P^* \B y \|_{\infty} \ge | \B e_i^\T (\B P^{(T)} \B y - \B P^* \B y) | = | \B e_i^\T ( \B y - \B P^* \B y)  | \ge L.
$$
By Lemma \ref{lemma: large decrease in past iterations}, we have $ \|  \B P^{(k+1)} \B y -  \B P^* \B y \|^2 - \|  \B P^{(k)} \B y -  \B P^* \B y \|^2 \le -L^2/5 $
for all $k\le T-1$. As a result, 
$$
\|  \B P^{(T)} \B y  -  \B P^* \B y\|^2 -  \|  \B P^{(0)} \B y  -  \B P^* \B y\|^2 \le -TL^2/5 \le -RL^2/10 \ .
$$
Since by Assumption~\ref{ass-main0}~(1),
$
\| \B P^{(0)} \B y - \B P^* \B y \|^2 =  \|   \B y - \B P^* \B y \|^2 \le r U^2 \ ,
$
we have
$$
\|  \B P^{(T)} \B y  -  \B P^* \B y\|^2 \le r U^2 - RL^2/10 \le r U^2 - \frac{L^2}{10} \frac{10C_1rU^2}{L^2} = (1-C_1) rU^2<0 \ .
$$
This is a contradiction, so such an iteration counter $T$ does not exist; and for all $k\ge R/2$, we have $ \supp(\B P^{*}) \subseteq  \supp(\B P^{(k)})  $.

\subsubsection{Proof of Theorem \ref{theorem: main result}}
Let $a = 2.2$. 
Because $R\ge 10r$, we have $r +R \le 1.1R = aR/2$; 
and for any $k\ge 0$: 
$$\| \B P^{(k)} \B y - \B P^* \B y \|_0 \le \dist (  \B P^{(k)} , \B I_n ) + \dist (  \B P^* , \B I_n ) \le R+r\le aR/2.$$ 
Hence, by Lemma~\ref{lemma: one-step}, there exists a permutation matrix $\tdP^{(k)} \in \Pi_n$ such that $\dist ( \tdP^{(k)} , \B P^{(k)}  ) \le 2  $, $\supp ( \tdP^{(k)} (\B P^*)^{-1}) \subseteq   \supp ( \B  P^{(k)} (\B P^*)^{-1} ) $ and 
$$
\| \tdP^{(k)} \B y- \B P^* \B y\|^2 \le (1-1/(aR) ) \| \B P^{(k)} \B y - \B P^* \B y \|^2  \ . 
$$
As a result, 
\begin{eqnarray}
\| \tdH (\tdP^{(k)} \B  y- \B P^* \B y) \|^2 \le 
\|  \tdP^{(k)} \B y- \B P^* \B y \|^2 &\le& 
(1-1/(aR) ) \| \B P^{(k)} \B y - \B P^* \B y \|^2  \nonumber\\
&\le& 
\frac{1-1/(aR)}{1-R\rho_n} \| \tdH(\B P^{(k)} \B y - \B P^* \B y) \|^2 \ , \nonumber
\end{eqnarray}
where the last inequality is from Assumption~\ref{ass-main0}~(3).
Note that by Assumption~\ref{ass-main}~(1), we have $R^2 \rho_n \le 1/10 \le 1/(2a) $, so
$  R\rho_n \le  1/(2aR) $. Because $ (1- 1/(aR)) \le (1-1/(2aR))^2 $, we have 
\begin{eqnarray}
\| \tdH (\tdP^{(k)} \B y- \B P^* \B y) \|^2 
&\le& \frac{1-1/(aR)}{1-1/(2aR)} \| \tdH(\B P^{(k)} \B y - \B P^*\B y) \|^2 \nonumber\\
&\le&  (1-1/(2aR)) \| \tdH(\B P^{(k)} \B y - \B P^*\B y) \|^2 \ . \nonumber
\end{eqnarray}
Recall that~\eqref{model1} leads to $\tdH \B P^* \B y = \tdH \B \ep$, so we have
\begin{equation}\label{ineq-10}
\| \tdH \tdP^{(k)} \B y- \tdH \B \ep \|^2 \le  (1-1/(2aR)) \| \tdH \B P^{(k)} \B y - \tdH \B \ep \|^2 \ .
\end{equation}
Let $\eta := 1/(2aR)$ and $\B z:= \tdP^{(k)} \B y - \B P^{(k)} \B y$, then \eqref{ineq-10} leads to:
\begin{eqnarray}\label{ineq-c1}
&&\| \tdH \tdP^{(k)} \B y \|^2 \nonumber\\
&\le& 	(1-\eta )\| \tdH  \B P^{(k)} \B y \|^2 - \eta \| \tdH \B \ep \|^2 + 2 \la \tdH \tdP^{(k)} \B y, \tdH \B \ep \ra - 2(1-\eta) \la \tdH \B P^{(k)} \B y, \tdH \B \ep \ra \nonumber\\
&\le&
(1-\eta )\| \tdH  \B P^{(k)} \B y \|^2 + 2 (1-\eta) \la \tdH \B z , \tdH \B \ep \ra + 2\eta \la \tdH \tdP^{(k)} \B y, \tdH \B \ep \ra \nonumber\\
&\le&
(1-\eta )\| \tdH  \B P^{(k)} \B y \|^2 +
2|\la \tdH \B z , \tdH \B \ep \ra | +
2\eta |\la \tdH \tdP^{(k)} \B y, \tdH \B \ep \ra| \ 
\end{eqnarray}
where, to arrive at the second inequality, we drop the term $- \eta \| \tdH \B \ep \|^2$.
We now make use of the following claim whose proof is in 
Section~\ref{subsection: proof of claim-bound}:
\begin{equation}\label{claim-bound}
{\bf Claim.} ~~~~ 2|\la \tdH \B z , \tdH \B \ep \ra | \le 
\frac{\eta}{4}\|\tdH \tdP^{(k)} \B y\|^2 + \frac{\eta}{4}\|\tdH  \B P^{(k)} \B y\|^2 + 4\eta \|\tdH \B \ep \|^2 \ .
\end{equation}
On the other hand, by Cauchy-Schwarz inequality, 
\begin{equation}\label{ineq-c3}
2\eta |\la \tdH \tdP^{(k)} \B y, \tdH \B \ep \ra| \le 
({\eta}/{4}) \| \tdH \tdP^{(k)} \B y \|^2 + 4\eta  \| \tdH \B \ep\|^2 \ .
\end{equation}
Combining \eqref{ineq-c1}, \eqref{claim-bound} and \eqref{ineq-c3},
we have
\begin{equation}
\begin{aligned}
\| \tdH \tdP^{(k)} \B y \|^2  &\le 
(1-\eta )\| \tdH  \B P^{(k)} \B y \|^2 + (\eta/2) 	\| \tdH \tdP^{(k)} \B y \|^2 \\
&~~~ + (\eta/4) 	\| \tdH \B P^{(k)} \B y \|^2  
+8 \eta \|\tdH \B \ep \|^2 \ . 
\end{aligned}
\nonumber
\end{equation}
After some rearrangement, the above leads to:
\begin{equation}\label{eq-above-claim}
\begin{aligned}
\| \tdH \tdP^{(k)} \B  y \|^2  &\le \frac{1-3\eta/4}{1-\eta/2}  \| \tdH  \B P^{(k)} \B y \|^2+\frac {8 \eta}{1-\eta/2}  \|\tdH \B \ep \|^2 \\
& \le 
(1-\eta/4) \| \tdH  \B P^{(k)} \B y \|^2+9 \eta \|\tdH \B \ep \|^2 \ 
\end{aligned}
\end{equation}
where the second inequality uses $ 1-3\eta/4 \le (1-\eta/2) (1-\eta/4)$ and $ (1-\eta/2)^{-1} \le 9/8 $ (recall, $\eta = 1/(2aR)$).

\noindent To complete the proof, we use another claim whose proof is in Section~\ref{subsection: proof of claims}:
\begin{equation}\label{claim2}
{\bf Claim.} ~~~ {\rm For~ any~ } k\ge 0 {\rm ~ it ~ holds~ that~} \tdP^{(k)} \in \cN_R(\B I_n) \cap \cN_{2}(\B P^{(k)})  \ .
\end{equation}
By the above claim, the update rule \eqref{update} and inequality \eqref{eq-above-claim}, we have
$$
\| \tdH \B P^{(k+1)} \B y\|^2 \le 	\| \tdH \tdP^{(k)} \B y \|^2  \le 	(1-\eta/4) \| \tdH  \B P^{(k)} \B y \|^2+9 \eta \|\tdH \B \ep \|^2 \ .
$$
Using the notation $a_k:= \| \tdH \B P^{(k)} \B y \|^2$, $\lam =1-\eta/4$ and $ \tilde{e} = 9 \eta \|\tdH \B \ep \|^2 $, the above inequality leads to:
$ a_{k+1} \le \lam a_k + \tilde{e}$ for all $k\ge 0$. Therefore, we have
\begin{equation}
\frac{a_{k+1}}{\lam^{k+1}} \le \frac{a_{k}}{\lambda
	^{k}} + \frac{\tilde{e}}{\lam^{k+1}} \le \frac{a_{k-1}}{\lam^{k-1}} + \frac{\tilde{e}}{\lam^{k}}  + \frac{\tilde{e}}{\lam^{k+1}} \le \cdots \le \frac{a_0}{\lam^0} + \tilde{e} \sum_{i=1}^{k+1} \frac{1}{\lam^i} \ , \nonumber
\end{equation}
which implies $ a_k \le a_0 \lam^k + \tilde{e} \sum_{i=1}^k \lam^{i-1} \le 
a_0 \lam^k + ({\tilde{e}}/{(1-\lam)}) $.
This leads to 
\begin{equation}
\begin{aligned}
\| \tdH \B P^{(k)} \B y \|^2 &\le 	
(1-\eta/4)^k
\| \tdH \B P^{(0)} \B y \|^2  + \frac{9 \eta \|\tdH \B \ep \|^2}{\eta/4} \\
&\le
(1- {1}/{(8aR)})^k \| \tdH \B P^{(0)}\B  y \|^2 + 36 \|\tdH \B \ep \|^2 \ . 
\end{aligned}
\nonumber
\end{equation}
Recalling that $ 8a \le 18 $, we conclude the proof of the theorem.

	\subsubsection{Proof of Theorem \ref{theorem: inf-norm-bound}}
	By the definition of $G(\cdot)$, we have  
	\begin{equation}\label{local minimum}
	\| \tdH \B P^{(k)} \B y \|^2 \le 	\| \tdH \B P \B y \|^2 + G(\B P^{(k)}) ~~~ \forall ~ \B P\in \cN_2(\B P^{(k)}) \cap \cN_R(\B I_n) \ . 
	\end{equation}
	By Lemma \ref{lemma: one-step}, there exists a permutation matrix $\tdP^{(k)} \in \Pi_n$ such that 
	$$\dist ( \tdP^{(k)} ,\B P^{(k)}  ) \le 2, ~~\supp ( \tdP^{(k)} (\B P^*)^{-1}) \subseteq   \supp (  \B P^{(k)} (\B P^*)^{-1} )$$ 
	and 
	\begin{equation}\label{decrease}
	\| \tdP^{(k)} \B y- \B P^* \B y\|^2 - \| \B P^{(k)} \B y - \B P^*\B y \|^2  \le - (1/2) \| \B P^{(k)} \B y - \B P^*\B y \|^2_{\infty} \ . 
	\end{equation}
	By Claim \eqref{claim2} we have
	\begin{equation}\label{claim1}
	\tdP^{(k)} \in \cN_R(\B I_n) \cap \cN_{2}(\B P^{(k)}) \ .
	\end{equation}
	Therefore, by \eqref{local minimum} and \eqref{claim1}, we have 
	$$
	\| \tdH \B P^{(k)} \B  y \|^2 \le 	\| \tdH \tdP^{(k)} \B y \|^2 + G(\B P^{(k)})\ .
	$$
	Let $\B z := \tdP^{(k)} \B y - \B P^{(k)} \B y$. 
	Recall that $\tdH \B P^* \B y = \tdH \B \ep$, so by the inequality above we have
	$$
	\| \tdH (\B P^{(k)} \B y - \B P^* \B y) + \tdH \B \ep  \|^2 \le 	\| \tdH (\B P^{(k)} \B y - \B P^* \B y) + \tdH \B \ep + \tdH \B z  \|^2 +G(\B P^{(k)}) 
	$$
	which is equivalent to 
	\begin{equation}\label{sum1}
	-2 \la \tdH (\B P^{(k)} \B y - \B P^*\B y) , \tdH \B z \ra - \| \tdH \B z \|^2 \le 2 \la \tdH \B z, \tdH \B \ep \ra  +  G(\B P^{(k)})  \ . 
	\end{equation}
	On the other hand, from \eqref{decrease} we have 
	$$
	\| \B P^{(k)} \B y - \B P^* \B y + \B z \|^2  - \| \B P^{(k)} \B y - \B P^* \B y  \|^2\le - (1/2) \| \B P^{(k)} \B y - \B P^* \B y  \|_{\infty}^2  \ 
	$$
	or equivalently,
	\begin{equation}\label{sum2}
	2 \la  \B P^{(k)} \B y - \B P^* \B y,  \B z \ra + \| \B z\|^2 \le - (1/2) \| \B P^{(k)} \B y - \B P^* \B y  \|_{\infty}^2 \ .
	\end{equation}
	Summing up \eqref{sum1} and \eqref{sum2} we have 
	\begin{equation}\label{ineq: basic}
	\begin{aligned}
	&~ 2 \la  \B H( \B P^{(k)} \B y - \B P^* \B y), \B H \B z \ra + 
	\| \B H  \B z\|^2 \\
	\le & ~ 2 \la \tdH  \B z, \tdH \B \ep \ra  - (1/2) \| \B P^{(k)} \B y - \B P^* \B y  \|_{\infty}^2 +  G(\B P^{(k)}) \ . 
	\end{aligned}
	\end{equation}
	Note that 
	\begin{equation}
	\begin{aligned}
	& \ 2 | \la  \B H( \B P^{(k)} \B y - \B P^* \B y), \B H \B z \ra  |  \le 
	2 \| \B H( \B P^{(k)} \B y - \B P^* \B y) \| \cdot \| \B H \B z  \|  
	\\
	\le & \ 2 \sqrt{R\rho_n} \| \B P^{(k)} \B y - \B P^* \B y \| 
	\| \B H \B z  \|  \le 
	2\sqrt{2} R \sqrt{\rho_n} \| \B P^{(k)} \B y - \B P^* \B y \|_{\infty} 
	\| \B H \B z  \| 
	\end{aligned}
	\end{equation}
	where the second inequality is 
	by Assumption \ref{ass-main0} (3) and the third inequality uses
	$\| \B P^{(k)} \B y - \B P^* \B y  \|_0 \le 2R$. 
	From Assumption \ref{ass-main} (1) we have $R\sqrt{\rho_n} \le 1/\sqrt{10}$, hence 
	\begin{equation}\label{ineq-new1}
	\begin{aligned}
	& ~
	2 | \la  \B H( \B P^{(k)} \B y - \B P^* \B y), \B H \B z \ra  | \le \frac{2}{\sqrt{5}} \| \B P^{(k)} \B y - \B P^* \B y \|_{\infty} 
	\| \B H \B z  \| 
	\\
	\le &~ \frac{1}{5} \| \B P^{(k)} \B y - \B P^* \B y \|_{\infty}^2 + 
	\| \B H \B z  \|^2  
	\end{aligned}
	\end{equation}
	where the last inequality is by Cauchy-Schwarz inequality. 
	Rearranging terms in~\eqref{ineq: basic}, and making use of~\eqref{ineq-new1}, we have 
	\begin{eqnarray}
	&& \| \B H  \B z\|^2  +  (1/2) \| \B P^{(k)} \B y - \B P^* \B y  \|_{\infty}^2 
	\nonumber\\
	&\le&  2 \la \tdH  \B z, \tdH \B \ep \ra -
	2 \la  \B H( \B P^{(k)} \B y - \B P^* \B y), \B H \B z \ra +  G(\B P^{(k)}) \nonumber\\
	&\le&
	2 \la \tdH  \B z, \tdH \B \ep \ra 
	+ (1/5) \| \B P^{(k)} \B y - \B P^* \B y \|_{\infty}^2 + 
	\| \B H \B z  \|^2 +  G(\B P^{(k)}) \ .  \nonumber
	\end{eqnarray}
	As a result, 
	\begin{equation}\label{ineq-new2}
	\begin{aligned}
	(3/10) \| \B P^{(k)} \B y - \B P^* \B y  \|_{\infty}^2  \le& 
	2 \la \tdH  \B z, \tdH \B \ep \ra +  G(\B P^{(k)}) \\
	=& 
	2\la   \B z, \tdH \B \ep \ra  +  G(\B P^{(k)}).
	\end{aligned}
	\end{equation}
	By the definition of $\B z$, we know there exist $i,j \in [n]$ such that 
	$$\B z = \|\B z\|_{\infty} (\B e_i - \B e_j) = (\| \B z \|/\sqrt{2})(\B e_i - \B e_j).$$ Therefore 
	\begin{equation}\label{ineq-new3}
	2 \la   \B z, \tdH \B \ep \ra = \sqrt{2}  \|\B z\| \la \B e_i - \B e_j, \tdH \B \ep  \ra \le 2\sqrt{2} \|\B z\|  \| \tdH \B \ep\|_{\infty} \le 
	2\sqrt{2}  \bar \sigma \|\B z\| 
	\end{equation}
	where the last inequality makes use of Assumption \ref{ass-main0} (4). 
	On the other hand, by \eqref{sum2} we have 
	\begin{eqnarray}
	\|\B z\|^2  \le 
	2 |\la  \B P^{(k)} \B y - \B P^* \B y,  \B z \ra|
	=
	\sqrt{2} \| \B z \|   |\la  \B P^{(k)} \B y - \B P^* \B y,  \B e_i - \B e_j \ra|  \ ,  \nonumber
	\end{eqnarray}
	and hence 
	\begin{equation}\label{ineq-new4}
	\|\B z\| \le \sqrt{2}   |\la  \B P^{(k)} \B y - \B P^* \B y,  \B e_i - \B e_j \ra |
	\le 2\sqrt{2}  \| \B P^{(k)} \B y - \B P^* \B y  \|_{\infty} \ .
	\end{equation}
	Combining \eqref{ineq-new2}, \eqref{ineq-new3} and \eqref{ineq-new4}, we have 
	\begin{eqnarray}
	(3/10) \| \B P^{(k)} \B y - \B P^* \B y  \|_{\infty}^2  &\le&  
	8 \bar\sigma \| \B P^{(k)} \B y - \B P^* \B y  \|_{\infty}  +  G(\B P^{(k)})
	\label{proof-thm4-last-1}\\
	&\le&
	80 \bar\sigma^2 + (1/5) \| \B P^{(k)} \B y - \B P^* \B y  \|_{\infty}^2  + G(\B P^{(k)})  \nonumber
	\end{eqnarray}
	where the second inequality is by Cauchy-Schwarz inequality. Inequality in display~\eqref{proof-thm4-last-1} leads to
	\begin{equation}
	\| \B P^{(k)} \B y - \B P^* \B y  \|_{\infty}^2 \le 800 \bar\sigma^2 + 10G(\B P^{(k)}) \ \nonumber
	\end{equation}
	which completes the proof of this theorem.

	\subsubsection{Proof of Theorem \ref{theorem: consistency}}
	
	Recall that $\B P^* \B y = \B{X} \B\beta^* + \B\ep$, so it holds 
	$\B \beta^* = (\B X^\T \B X)^{-1} \B X^\T (\B P^* \B y - \B \ep )$. Therefore
	\begin{equation}\label{beta-diff-1}
	\B\beta^{(k)} - \B\beta^* = (\B X^\T \B X)^{-1} \B X^\T (  \B P^{(k)} \B y - \B P^* \B y + \B \ep) \ .
	\end{equation}
	Hence we have 
	\begin{equation}\label{a1}
	\begin{aligned}
	\| \B\beta^{(k)} - \B\beta^* \| &\le
	\| (\B X^\T \B X)^{-1} \B X^\T (  \B P^{(k)} \B y - \B P^* \B y ) \| + 
	\|  (\B X^\T \B X)^{-1} \B X^\T \B \ep  \|     \\
	&\le \| (\B X^\T \B X)^{-1} \B X^\T (  \B P^{(k)} \B y - \B P^* \B y ) \| + 
	\bar \sigma \sqrt{{d}/{(n\bar\ga)}} 
	\end{aligned}
	\end{equation}
	where the second inequality is by Assumption \ref{ass-main2} (2). 
	Note that
	\begin{equation}\label{a2}
	\| \B P^{(k)} \B y - \B P^* \B y  \| \le \sqrt{2R} \| \B P^{(k)} \B y - \B P^* \B y  \|_{\infty} \le 
	\bar\sigma
	\sqrt{2R\bar c} 
	\end{equation}
	where the first inequality is because $\B P^{(k)} \B y - \B P^* \B y$ has at most $2R$ non-zero coordinates; the second inequality makes use of Theorem \ref{theorem: inf-norm-bound} and the definition of $\bar c$ in Theorem \ref{theorem: consistency}. 
	On the other hand, 
	by Assumption \ref{ass-main2} (1) we have 
	\begin{equation}\label{a3}
	\itl (\B X^\T \B X)^{-1} \B X^\T    \itl_2 = \sqrt{\itl (\B X^\T \B X)^{-1} \itl_2} \le 1/\sqrt{n\bar\ga} \ .
	\end{equation}
	Combining \eqref{a1}, \eqref{a2} and \eqref{a3} we have 
	\begin{eqnarray}
	\| \B\beta^{(k)} - \B\beta^* \| 
	&\le& \itl (\B X^\T \B X)^{-1} \B X^\T    \itl_2   \| \B P^{(k)} \B y - \B P^* \B y  \| + 
	\bar\sigma \sqrt{{d}/{(n\bar\ga)}}  \nonumber\\
	&\le& 
	\bar\sigma \sqrt{2\bar\ga^{-1}R\bar c/n}  +    \bar \sigma \sqrt{{d\bar\ga^{-1}}/{n}}  \ . \nonumber
	\end{eqnarray}
	Squaring both sides of the above, we get
	\begin{equation}
	\| \B\beta^{(k)} - \B\beta^* \|^2 ~\le~
	4\bar\ga^{-1}\bar c
	\frac{R  \bar\sigma^2 }{n}   + 2 \bar\ga^{-1}  \frac{d \bar\sigma^2}{n} \  \nonumber
	\end{equation}
	which completes the proof of \eqref{new-bound-1}. 
	
	We will now prove \eqref{new-bound-2}. Let us denote $\B J:= ( \B P^{(k)} )^{-1} \B X \B\beta^{(k)} - (\B P^*)^{-1} \B X \B\beta^*$. 
	Note that we can write
	\begin{equation}\label{b1}
	\B J ~=~  	( \B P^{(k)} )^{-1} \B X (\B\beta^{(k)} - \B\beta^*) + ( ( \B P^{(k)} )^{-1} - (\B P^*)^{-1} ) \B X \B\beta^* \ .
	\end{equation}
	Multiplying both sides of~\eqref{beta-diff-1} by $(\B P^{(k)} )^{-1} \B X$, we have 
	\begin{equation}\label{b2}
	( \B P^{(k)} )^{-1} \B X (\B\beta^{(k)} - \B\beta^*) = ( \B P^{(k)} )^{-1} \B H (  \B P^{(k)} \B y - \B P^* \B y + \B \ep) \ .
	\end{equation}
	On the other hand, 
	\begin{equation}\label{b3}
	\begin{aligned}
	&~
	( ( \B P^{(k)} )^{-1} - (\B P^*)^{-1} ) \B X \B\beta^* \\
	=&~
	( ( \B P^{(k)} )^{-1} - (\B P^*)^{-1} ) (\B P^* \B y - \B \ep) \\
	=&~ (\B P^{(k)})^{-1} ( \B P^* \B y - \B P^{(k)} \B y) - 
	( ( \B P^{(k)} )^{-1} - (\B P^*)^{-1} ) \B \ep \ .
	\end{aligned}
	\end{equation}
	Combining \eqref{b1}, \eqref{b2} and \eqref{b3} we have 
	\begin{equation}\label{J}
	\begin{aligned}
	\B J &=  \underbrace{( \B P^{(k)} )^{-1} \tdH (   \B P^* \B y -  \B P^{(k)} \B y )}_{:=\B J_{1}}~+~
	\underbrace{( \B P^{(k)} )^{-1} \B H  \B \ep}_{:=\B{J}_{2}} ~+~ 
	\underbrace{(  (\B P^*)^{-1} - ( \B P^{(k)} )^{-1}  ) \B \ep}_{:=\B{J}_{3}} \\
	&= \B J_1 +  \B J_2 +   \B J_3 \ .
	\end{aligned}
	\end{equation}
	By \eqref{a2}, we know 
	\begin{equation}\label{J1}
	\| \B J_1 \| \le  \|    \B P^* \B y -  \B P^{(k)} \B y  \| \le  \bar\sigma
	\sqrt{2R\bar c} \ .
	\end{equation}
	By Assumption \ref{ass-main0} (4) we have 
	\begin{equation}\label{J2}
	\| \B J_2 \| = \| \B H  \B \ep \| \le \sqrt{d} \bar\sigma \ .
	\end{equation}
	Since $\dist((\B P^*)^{-1} , ( \B P^{(k)} )^{-1}) \le 2R$, it holds 
	\begin{equation}\label{J3}
	\| \B J_3 \| = \|   (\B P^*)^{-1} \B\ep - ( \B P^{(k)} )^{-1} \B\ep  \| \le 2\sqrt{2R} \| \B \ep\|_\infty \le 3 \sqrt{R} \bar\sigma
	\end{equation}
	where the last inequality makes use of Assumption \ref{ass-main0} (4). 
	
	Using \eqref{J1}, \eqref{J2} and \eqref{J3}, to bound the r.h.s of~\eqref{J}, we have 
	\begin{equation}
	\| \B J \| ~ \le ~ 
	\bar\sigma
	\sqrt{2R\bar c} +  \sqrt{d} \bar\sigma  +  3 \sqrt{R} \bar\sigma \ .  \nonumber
	\end{equation}
	As a result, 
	\begin{equation}
	\frac{1}{n} \| \B J \|^2 ~ \le ~  2 (\sqrt{2\bar c}+3)^2 \frac{R\bar\sigma^2}{n} + \frac{2d\bar\sigma^2}{n}  \  . \nonumber
	\end{equation}
	This completes the proof of \eqref{new-bound-2}.

\subsection{Sufficient conditions for assumptions to hold}\label{subsection: sufficient conditions for assumptions}
Our analysis in Sections~\ref{subsection: Main results} and~\ref{subsection: proofs of main theorems} was completely deterministic in nature under Assumptions~\ref{ass-main0},~\ref{ass-main} and \ref{ass-main2}.  
To provide some intuition, 
in the following, we discuss some probability models on $\B X$ and $\B \ep$ under which Assumption~\ref{ass-main0}~(3),~(4),  Assumption~\ref{ass-main}~(2) and Assumption \ref{ass-main2}
hold true with high probability.

\subsubsection{A random model matrix $\B X$}
When the rows of $\B X$ are iid draws from a well behaved probability distribution, Assumption \ref{ass-main0} (3) and
Assumption \ref{ass-main2} (1) 
hold true with high probability. This is formalized via the following lemma.
\begin{lemma}\label{lemma: rip}
	Suppose the rows of the matrix $\B X$: $\B x_1,\ldots,\B x_n$ are iid zero-mean random vectors in $\R^d$ with covariance matrix $\B \Sigma\in \R^{d\times d}$. Suppose there exist constants $\gamma, b,V>0$ such that $ \lam_{\min} (\B \Sigma) \ge \gamma $, $ \| \B x_i \| \le b $ and $ \| \B x_i \|_{\infty} \le V $ almost surely. 
	Given any $\tau>0$, 
	define
	$$
	\dt_{n,m}:= 16V^2  \Big(\frac{d}{n\ga}\log(2d/\tau)   + \frac{dm}{n\ga} \log(3n^2) \Big) \ .
	$$
	Suppose $n$ is large enough such that $\sqrt{\dt_{n,m}} \ge 2/n$ and $ 3b^2 \itl \B \Sigma\itl_2 \log(2d/\tau) / n\le (1/4)\ga^2  $. 
	Then with probability at least $ 1-2\tau$, it holds $\lam_{\min} (\frac{1}{n} \B X^\T \B X ) \ge \ga/2$, and 
	\begin{equation}
	\| \B H \B u \|^2  \le  {\dt_{n,m}} \| \B u\|^2  ~~~~ \forall ~\B u\in \cB_m \ . 
	\end{equation}
\end{lemma}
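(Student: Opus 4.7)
The plan is to prove the two statements separately, with Part (1) providing the conditioning we need for Part (2). Both parts reduce to matrix concentration (Bernstein/Chernoff) for independent sums of random positive semidefinite matrices.

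For Part (1), I write $\tfrac{1}{n}\B{X}^{\T}\B{X} - \B{\Sigma} = \tfrac{1}{n}\sum_{i=1}^{n}(\B{x}_{i}\B{x}_{i}^{\T} - \B{\Sigma})$, a sum of i.i.d. zero-mean symmetric matrices. Each summand has operator norm at most $\|\B{x}_{i}\|^{2}+\|\B{\Sigma}\|_{2}\le b^{2}+\|\B{\Sigma}\|_{2}$, and the matrix variance proxy is bounded by $b^{2}\|\B{\Sigma}\|_{2}$. Applying the matrix Bernstein inequality yields, for any $t>0$,
\begin{equation*}
\Pb\Big(\itl \tfrac{1}{n}\B{X}^{\T}\B{X} - \B{\Sigma}\itl_{2}\ge t\Big) \le 2d\exp\Big(-\tfrac{nt^{2}/2}{b^{2}\|\B{\Sigma}\|_{2}+b^{2}t/3}\Big).
\end{equation*}
Setting $t=\ga/2$, the assumption $3b^{2}\itl \B{\Sigma}\itl_{2}\log(2d/\tau)/n \le \ga^{2}/4$ makes the right-hand side at most $\tau$, so with probability $\ge 1-\tau$, $\lam_{\min}(\tfrac{1}{n}\B{X}^{\T}\B{X})\ge \ga-\ga/2 = \ga/2$.

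For Part (2), I condition on the event from Part (1). For any $\B{u}\in \cB_{m}$ with support $S$, writing $\B{H}=\B{X}(\B{X}^{\T}\B{X})^{-1}\B{X}^{\T}$ gives
\begin{equation*}
\|\B{H}\B{u}\|^{2} = \B{u}^{\T}\B{X}(\B{X}^{\T}\B{X})^{-1}\B{X}^{\T}\B{u} \le \tfrac{2}{n\ga}\|\B{X}^{\T}\B{u}\|^{2} \le \tfrac{2}{n\ga}\,\itl \B{X}_{S}^{\T}\B{X}_{S}\itl_{2}\,\|\B{u}\|^{2},
\end{equation*}
where $\B{X}_{S}\in\R^{m\times d}$ is the row-submatrix of $\B{X}$ indexed by $S$. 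The remaining task is a uniform upper bound on $\itl \B{X}_{S}^{\T}\B{X}_{S}\itl_{2}=\itl \sum_{i\in S}\B{x}_{i}\B{x}_{i}^{\T}\itl_{2}$ over all $\binom{n}{m}$ supports. I apply matrix Bernstein to $\sum_{i\in S}(\B{x}_{i}\B{x}_{i}^{\T}-\B{\Sigma})$ using the bound $\|\B{x}_{i}\B{x}_{i}^{\T}\|_{2}=\|\B{x}_{i}\|^{2}\le dV^{2}$ (via $\|\B{x}_{i}\|_{\infty}\le V$), and take a union bound over all subsets using $\binom{n}{m}\le (en)^{m}$. This contributes the $m\log(3n^{2})$ term, while the $\log(2d/\tau)$ factor in the first summand of $\dt_{n,m}$ accounts for the matrix dimension $d$.

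The main technical subtlety will be in carefully balancing the two Bernstein regimes (subgaussian vs.\ subexponential tail) to extract the exact constant $16V^{2}$ and the additive split of $\dt_{n,m}$ into a $d\log(2d/\tau)$ piece and an $dm\log(3n^{2})$ piece; the former comes from ``centering'' $\sum_{i\in S}\B{x}_{i}\B{x}_{i}^{\T}$ around $m\B{\Sigma}$ and absorbing $m\|\B{\Sigma}\|_{2}\le mdV^{2}$ into the large-deviation term, while the latter comes from the union bound. The side condition $\sqrt{\dt_{n,m}}\ge 2/n$ ensures that $\dt_{n,m}$ is sufficiently large to dominate certain lower-order discretization errors (or equivalently, rules out a degenerate regime where $n$ is too large relative to $\dt_{n,m}$ for the bound to be meaningful). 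Once both parts hold on the intersection of the two events, which has probability $\ge 1-2\tau$ by a final union bound, the lemma follows.
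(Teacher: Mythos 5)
Your proposal is correct in substance, but its core step takes a genuinely different route from the paper. The first halves coincide: like the paper, you get $\lam_{\min}(\tfrac{1}{n}\B X^\T\B X)\ge \ga/2$ from matrix concentration of $\tfrac{1}{n}\B X^\T\B X-\B\Sigma$ plus Weyl's inequality, and then use $\itl \B X(\B X^\T\B X)^{-1}\itl_2\le\sqrt{2/(n\ga)}$ to reduce everything to a uniform bound on $\|\B X^\T\B u\|$ over $m$-sparse $\B u$. For that uniform bound the paper discretizes: it takes a $(\sqrt{\dt_{n,m}}/2)$-net of the $m$-sparse unit ball of cardinality at most $(3n^2)^m$ (this net-size bound is exactly where the hypothesis $\sqrt{\dt_{n,m}}\ge 2/n$ enters, a slightly different role than the ``lower-order discretization errors'' you guessed), applies scalar Hoeffding to each of the $d$ coordinates of $\B X^\T\B u^j$ at each net point, union-bounds over coordinates and net points, and finally passes from the net to all of $\cB_m$ using non-expansiveness of the projection $\B H$. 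You instead bound $\|\B X^\T\B u\|^2\le \itl\B X_S^\T\B X_S\itl_2\,\|\B u\|^2$ and control $\itl\sum_{i\in S}\B x_i\B x_i^\T\itl_2$ uniformly over the $\binom{n}{m}\le n^m$ supports via matrix Bernstein plus a union bound over supports---no net at all. Your approach does deliver the stated constant: one needs $\itl\B X_S^\T\B X_S\itl_2\le 8V^2d\big(\log(2d/\tau)+m\log(3n^2)\big)$, and after absorbing $m\itl\B\Sigma\itl_2\le dV^2\,m\log(3n^2)$, matrix Bernstein with $K\le 2dV^2$ and variance proxy $\le md^2V^4$ at threshold $t=7dV^2\big(\log(2d/\tau)+m\log(3n^2)\big)$ gives failure probability well below $\tau/\binom{n}{m}$, so the split of $\dt_{n,m}$ comes out with room to spare. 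The trade-off: the paper's argument is more elementary (only scalar Hoeffding) but needs the extra hypothesis $\sqrt{\dt_{n,m}}\ge 2/n$ to control the net; yours invokes the heavier matrix Bernstein inequality but avoids discretization entirely and would not need that hypothesis at all.
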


The proof of Lemma \ref{lemma: rip} is presented in Section \ref{subsection: proof of rip lemma}. 
Suppose there are universal constants $\bar c>0$ and $\bar C>0$ such that the parameters $(\gamma, V, b, \itl \B \Sigma \itl_2 ,\tau )$ in Lemma~\ref{lemma: rip} satisfy $\bar c \le \gamma, V, b, \itl \B \Sigma \itl_2 ,\tau \le \bar C$. 
Given a pre-specified probability level (e.g., $1-2\tau=0.99$), 
under the setting of Lemma \ref{lemma: rip}, 
if we set $\rho_n = \dt_{n,4}$ and $\bar\sigma = \ga/2$, then Assumption~\ref{ass-main0}~(3) and Assumption \ref{ass-main2} (1) are true with high probability ($\ge 1-2\tau$). 

Note that the almost sure boundedness assumption on $\|\B x_i\|$ 
can be relaxed to cases when $\|\B x_i\|$ is bounded with high probability (e.g. $\B x_i \stackrel{\text{iid}}{\sim} N(\B 0, \B\Sigma)$).

\subsubsection{The error distribution}
In the following, we discuss a commonly used random setting under which Assumption \ref{ass-main0} (4) and Assumption \ref{ass-main} (2) hold with high probability. 
A random variable $\xi$ is called sub-Gaussian \cite{wainwright2019high} with variance proxy $\vartheta^2$ (denoted by $\xi \in \subG(\vartheta^2)$) if $\Ex[\xi] = 0$ and $\Ex e^{t\xi} \le e^{\vartheta^2 t^2 /2}$ for all $t\in \R$.

	\begin{lemma}\label{lemma: noise}
		Suppose
		$\B \ep = [\ep_1,...,\ep_n]^\T$ with
		$\ep_1,...,\ep_n 
		\stackrel{\text{iid}}{\sim} \subG(\sigma^2)$ for some $\sigma>0 $. Suppose $\B \ep$ is independent of $\B X$. Then with probability\footnote{The probability statements here are conditional on $\B X$.} at least $ 1-\tau $ it holds
		\smallskip
		
		(a) $\max\{\| \B \ep \|_{\infty},  \| \tdH \B \ep \|_{\infty}  \} \le \sigma \sqrt{2\log(6n/\tau)}$. 
		\smallskip
		
		(b) $	\| \B H \B \ep \|\le  \sigma \sqrt{2d\log(6d/\tau)} $. 
		\smallskip
		
		(c) 	$	\| (\B X^\T \B X)^{-1} \B X^\T \B\ep \| \le    \lam^{-1/2}_{\min} ( \frac{1}{n} \B X^\T \B X ) \cdot  \sigma \sqrt{2d \log(6d/\tau)/n}$. 
		\smallskip
		
		In addition, if $\wtd \sigma^2 := \text{Var}(\ep_i) \ge (3/4) \sigma^2$, then there exists a universal constant $C>0$ such that if $\sqrt{\log(4/\tau)/(Cn)} + 2d \log(4d/\tau)/n \le 1/4 $, then with probability at least $1-\tau$, 
		\begin{equation}\label{conclusion2-noise}
		\| \tdH \B \ep \|^2 \ge (1/2)n\sigma^2 \ . 
		\end{equation}
	\end{lemma}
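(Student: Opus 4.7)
My plan is to handle the three high-probability upper bounds via standard sub-Gaussian maximal inequalities and to derive the lower bound on $\|\tdH\B\ep\|^2$ via the decomposition $\|\tdH\B\ep\|^2=\|\B\ep\|^2-\|\B H\B\ep\|^2$ combined with a Bernstein-type concentration for $\|\B\ep\|^2$. Throughout, I would allocate a $\tau/3$ budget to each of (a)-i (for $\|\B\ep\|_\infty$), (a)-ii (for $\|\tdH\B\ep\|_\infty$), and the pair (b)+(c), since the last two are controlled on the same event.

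For (a), the sub-Gaussian tail bound $\Pb(|\ep_i|>t)\le 2e^{-t^2/(2\sigma^2)}$ and a union bound over $i\in[n]$ give $\|\B\ep\|_\infty\le \sigma\sqrt{2\log(6n/\tau)}$ with failure probability at most $\tau/3$. For $(\tdH\B\ep)_i=\sum_j(\tdH)_{ij}\ep_j$, I would note that the independence of the $\ep_j$ makes this variable sub-Gaussian with variance proxy $\sigma^2\sum_j(\tdH)_{ij}^2 = \sigma^2(\tdH^2)_{ii}=\sigma^2(\tdH)_{ii}\le \sigma^2$, using that $\tdH$ is an orthogonal projection with diagonal entries in $[0,1]$. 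The same union bound then yields the stated $\|\tdH\B\ep\|_\infty$ bound with failure probability $\le\tau/3$.

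For (b) and (c), I would factor $\B H=\B U\B U^\T$ with $\B U\in\R^{n\times d}$ having orthonormal columns, so $\|\B H\B\ep\|^2=\|\B U^\T\B\ep\|^2$. Each coordinate $\B u_j^\T\B\ep$ is sub-Gaussian with variance proxy $\sigma^2\|\B u_j\|^2=\sigma^2$; a union bound over the $d$ coordinates gives $\max_j|\B u_j^\T\B\ep|\le \sigma\sqrt{2\log(6d/\tau)}$, hence $\|\B H\B\ep\|^2\le 2d\log(6d/\tau)\sigma^2$ on the same event. For (c), factor $(\B X^\T\B X)^{-1}\B X^\T\B\ep=(\B X^\T\B X)^{-1/2}\,\B V^\T\B\ep$, where $\B V:=\B X(\B X^\T\B X)^{-1/2}$ also has orthonormal columns, so $\|\B V^\T\B\ep\|$ obeys exactly the bound just derived. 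Multiplying by $\interleave(\B X^\T\B X)^{-1/2}\interleave_2=n^{-1/2}\lam_{\min}^{-1/2}(\tfrac{1}{n}\B X^\T\B X)$ produces the inequality in (c) at no extra probability cost, so the total failure probability for (a)--(c) stays $\le \tau$.

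For the additional lower bound, I would invoke a Bernstein inequality for the independent centered sub-exponential summands $\ep_i^2-\wtd\sigma^2$ (each has sub-exponential norm $O(\sigma^2)$ because $\ep_i\in\subG(\sigma^2)$): there exists a universal $c>0$ such that $\Pb(|\|\B\ep\|^2-n\wtd\sigma^2|>tn\sigma^2)\le 2\exp(-cn\min\{t^2,t\})$. Choosing $t=\sqrt{\log(4/\tau)/(cn)}$ (which is $\le 1$ under the hypothesis with $C:=c$) yields $\|\B\ep\|^2\ge n\wtd\sigma^2-\sigma^2\sqrt{n\log(4/\tau)/c}$ with probability $\ge 1-\tau/2$. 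Combining this with $\|\B H\B\ep\|^2\le 2d\log(4d/\tau)\sigma^2$ from the argument of (b) run with a $\tau/2$ budget, and using $\wtd\sigma^2\ge (3/4)\sigma^2$, I obtain
\[
\|\tdH\B\ep\|^2 \;\ge\; n\sigma^2\Bigl[\tfrac{3}{4} - \sqrt{\log(4/\tau)/(cn)} - 2d\log(4d/\tau)/n\Bigr] \;\ge\; \tfrac{1}{2}\,n\sigma^2,
\]
where the last step invokes the standing hypothesis on $n$. The main obstacle is the Bernstein step: one must choose the universal constant $C$ in the hypothesis to match the one produced by the particular sub-exponential concentration inequality used, and keep the probability bookkeeping clean so the final statement holds with probability at least $1-\tau$. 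The rest is a careful accounting of constants in standard sub-Gaussian tools.
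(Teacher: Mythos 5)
Your proposal is correct and follows essentially the same route as the paper's proof: coordinate-wise sub-Gaussian tail bounds with union bounds (budgeted $\tau/3$ each) for part (a) and for the $d$-dimensional orthonormal reduction underlying (b) and (c), followed by the Bernstein bound on $\|\B\ep\|^2$ combined with the decomposition $\|\tdH\B\ep\|^2=\|\B\ep\|^2-\|\B H\B\ep\|^2$ and the standing hypothesis on $n$ for the lower bound, with the universal constant $C$ inherited from the Bernstein inequality exactly as in the paper. The only cosmetic difference is that the paper factors through the SVD $\B X=\B{\bar U}\B D\B{\bar V}^\T$ where you use $\B H=\B U\B U^\T$ and the whitened matrix $\B X(\B X^\T\B X)^{-1/2}$, which are interchangeable.
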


The proof of Lemma~\ref{lemma: noise} is presented in Section \ref{section: proof of lemma noise}. Note that in Lemma~\ref{lemma: noise} the assumption $\text{Var}(\ep_i) \ge (3/4) \sigma^2$ can be replaced by $ \text{Var}(\ep_i) \ge C_0 \sigma^2 $ for any constant $C_0$, with the conclusion changing accordingly (i.e., $1/2$ in \eqref{conclusion2-noise} will be replaced by another constant). In particular, Lemma \ref{lemma: noise} holds true when $\ep_i \stackrel{\text{iid}}{\sim} N(0,\sigma^2)$. 
Given a pre-specified probability level (e.g., $1-\tau=0.99$), 
under the setting of Lemma \ref{lemma: noise}, if we set 
$\bar\sigma = \sigma \sqrt{2\log(6n/\tau)}$, then Assumption~\ref{ass-main0}~(4), Assumption~\ref{ass-main}~(2) and Assumption \ref{ass-main2}~(2) hold with probability at least $1-2\tau$.

	\subsubsection{Summary}
	
	We summarize parameter choices informed by the results above in the following corollary.

	\begin{corollary}
		Suppose the matrix $\B X$ is drawn from a probability model as discussed in
		Lemma~\ref{lemma: rip} with $m=R$; and
		the noise term $\B \ep$ satisfies the assumptions in Lemma~\ref{lemma: noise}. Then with probability at least $1-6\tau$,
		the inequalities in \eqref{ass-re}, \eqref{ineq-ass1}, \eqref{ineq-ass2} and \eqref{ineq-ass3} hold true with the following parameters 
		\begin{equation}
		\rho_n = 16V^2  \Big(\frac{d}{n\ga}\log(2d/\tau)   + \frac{4d}{n\ga} \log(3n^2) \Big), 
		\end{equation}
		$\bar\sigma = \sigma \sqrt{2\log(6n/\tau)}$ and $\bar\ga = \ga/2$. 
		
		If in addition the conditions in Assumption \ref{ass-main0} (1) and (2) hold true and the following four inequalities 
		\begin{equation}
		R \rho_n \le L^2/(90U^2), \quad \bar \sigma \le \min\{0.5, (\rho_nd)^{-1/2}\} L^2/(80U) \ ,  \nonumber
		\end{equation}
		\begin{equation}
		R^2 \rho_n \le 1/10, \quad \bar \sigma^2  \le \sigma^2 \min \{n/(660R^2), \  n/(5dR)\} \ 
		\end{equation}
		are true,
		then all the statements in Assumptions \ref{ass-main0}, \ref{ass-main} and \ref{ass-main2} are satisfied. 
	\end{corollary}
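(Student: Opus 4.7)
The proof is a bookkeeping exercise: combine Lemma~\ref{lemma: rip} (for the design matrix $\B X$) and Lemma~\ref{lemma: noise} (for the noise $\B \ep$) via a union bound, and then check that the stated parameter choices and the additional deterministic inequalities line up exactly with the hypotheses of Assumptions~\ref{ass-main0}, \ref{ass-main} and \ref{ass-main2}. Nothing new needs to be proved; the challenge, such as it is, is making the algebra match.

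First I would handle the design-matrix side. To obtain both halves of \eqref{ass-re} with the $\rho_n$ stated in the corollary, I would apply Lemma~\ref{lemma: rip} twice: once with $m=4$, giving $\|\B H \B u\|^2 \le \dt_{n,4}\|\B u\|^2$ for all $\B u \in \cB_4$ and setting $\rho_n := \dt_{n,4}$ recovers the first inequality; and once with $m=2R$, giving $\|\B H \B u\|^2 \le \dt_{n,2R}\|\B u\|^2$ for all $\B u \in \cB_{2R}$. A direct comparison of the two formulas shows $\dt_{n,2R} \le R\,\dt_{n,4} = R\rho_n$ whenever $R\ge 2$, which yields the second inequality of \eqref{ass-re}. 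The same two applications also deliver $\lam_{\min}\bigl(\tfrac{1}{n}\B X^\T\B X\bigr)\ge \ga/2$, so Assumption~\ref{ass-main2}~(1) holds with $\bar\ga = \ga/2$. Each application of Lemma~\ref{lemma: rip} costs a failure budget of $2\tau$.

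Next I would treat the noise side, conditionally on $\B X$. Substituting $\bar\sigma = \sigma\sqrt{2\log(6n/\tau)}$, part (a) of Lemma~\ref{lemma: noise} furnishes the two $\ell_\infty$-bounds $\|\B\ep\|_\infty \le \bar\sigma$ and $\|\tdH\B\ep\|_\infty\le \bar\sigma$ appearing in \eqref{ineq-ass1}; part (b), together with $\log(6d/\tau)\le \log(6n/\tau)$, furnishes $\|\B H\B\ep\|\le \sqrt{d}\bar\sigma$; part (c), combined with the bound $\lam_{\min}(\tfrac{1}{n}\B X^\T\B X)\ge \bar\ga$ already established above, furnishes the second inequality in \eqref{ineq-ass3}. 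Finally, the variance-lower-bound portion of the lemma gives \eqref{ineq-ass2}. Accounting for the probability costs (at most $\tau$ for the joint (a)-(b)-(c) event and $\tau$ for the lower bound on $\|\tdH\B\ep\|^2$) and combining with the $4\tau$ from the two applications of Lemma~\ref{lemma: rip} via a union bound produces the claimed $1-6\tau$ overall probability.

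With all four random inequalities in place, the remaining assertion of the corollary is purely deterministic: one only has to observe that the four displayed inequalities ($R\rho_n\le L^2/(90U^2)$, the $\bar\sigma$ bound, $R^2\rho_n\le 1/10$, and the second $\bar\sigma^2$ bound), together with the assumed validity of Assumption~\ref{ass-main0}~(1)-(2), now say exactly the same thing as Assumptions~\ref{ass-main0}, \ref{ass-main} and \ref{ass-main2}, so these three assumptions hold verbatim. The only real obstacle is the probability-accounting and the algebraic verification that $\dt_{n,2R}\le R\dt_{n,4}$; there are no conceptually hard steps.
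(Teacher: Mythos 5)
Your proposal is correct and amounts to exactly the bookkeeping the paper leaves implicit (the corollary is stated without a written proof): two applications of Lemma~\ref{lemma: rip} plus the two events of Lemma~\ref{lemma: noise}, union-bounded as $2\tau+2\tau+\tau+\tau=6\tau$, followed by the deterministic matching of parameters to Assumptions~\ref{ass-main0}, \ref{ass-main} and \ref{ass-main2}. Your one deviation from the paper's wording is actually a needed clarification: the corollary says ``Lemma~\ref{lemma: rip} with $m=R$,'' but a single application at $m=R$ cannot produce the first inequality of \eqref{ass-re} with $\rho_n=\dt_{n,4}$, so your double application at $m=4$ and $m=2R$, together with the elementary comparison $\dt_{n,2R}\le R\,\dt_{n,4}$, is the right way to make both halves of \eqref{ass-re} (and the claimed $1-6\tau$) come out.
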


\section{Approximate local search steps for computational scalability}\label{section: fast local search steps}
As discussed in Section~\ref{sec:local-search-sec},  the local search step~\eqref{update} in Algorithm~\ref{alg: AFW} costs $O(n^2)$ for each iteration $k$---this can limit the scalability of Algorithm~\ref{alg: AFW} to problems with a large $n$. Here we discuss an efficient method to find an approximate solution for step~\eqref{update}. Suppose that in the $k$-th iteration of Algorithm \ref{alg: AFW} the permutation $\B P^{(k)}$ satisfies
$ \dist(\B P^{(k)} , \B I_n) \le R-2 $, 
then update \eqref{update} is
\begin{equation}\label{prob1}
\min_{\B P}~~~ \| \tdH \B P \B y \|^2 ~~~ \text{s.t. }~~~ \dist(\B P, \B P^{(k)} ) \le 2. 
\end{equation}
Problem~\eqref{prob1} can be equivalently formulated as 
\begin{align}
& \min_{i,j\in [n]}  \| \tdH (\B P^{(k)} \B y - (y_i - y_j) (\B e_i- \B e_j) ) \|^2 \label{prob2} \\
=&  \min_{i,j\in[n]} \Big\{ \| \tdH \B P^{(k)} \B y \|^2 - 2 (y_i - y_j) \la \B e_i - \B e_j , \tdH \B P^{(k)} \B y \ra + (y_i - y_j)^2 \| \tdH (\B e_i - \B e_j) \|^2 \Big\}. \nonumber 
\end{align}
In view of Assumption \ref{ass-main0} (3), for $n\gg d$, we have $ \| \tdH(\B e_i - \B e_j) \|^2 \approx 2 $. 
Note that in general, $\| \tdH(\B e_i - \B e_j) \|^2 \leq 2$. Hence, one can approximately optimize~\eqref{prob2} by minimizing an upper bound of the last two terms in the second line of display~\eqref{prob2}. This is given by:
\begin{equation}\label{prob3}
\min_{i,j\in[n]} ~~  \Big\{- 2 (y_i - y_j) \la \B e_i - \B e_j , \tdH \B P^{(k)} \B y \ra + 2 (y_i - y_j)^2 \Big\}. 
\end{equation}
Denoting $\B w := \tdH \B P^{(k)} \B y$ and $\B v: = \B y - \B w$, the objective in~\eqref{prob3} is given by
\begin{eqnarray}
&&
- 2 (y_i - y_j) \la \B e_i - \B e_j , \tdH \B P^{(k)} \B y \ra + 2 (y_i - y_j)^2 \nonumber\\
&=&
2(y_i - y_j) (-w_i + w_j  + y_i - y_j) = 2(y_i - y_j) (v_i - v_j) \ . \nonumber
\end{eqnarray}
So problem \eqref{prob3} is equivalent to 
\begin{equation}\label{prob4}
\min_{i,j\in[n]} ~~  (y_i - y_j) (v_i - v_j).
\end{equation}
As we discuss below, the computation cost of the above problem can be reduced by making use of its structural properties. Let us denote $\B z_i = (y_i, v_i) \in \R^2$. Among the set of points $\{\B z_1,..., \B z_n\}$, we say $\B z_i$ is a ``left-top" point if for all $j\in [n]$, 
$$
\B z_j \notin \{  (u_1,u_2) \in \R^2 ~|~   u_1 \le y_i , ~ u_2 \ge v_i  \} \setminus \{\B z_i\} \ .
$$
We say $\B z_i$ is a ``right-bottom" point if for all $j\in [n]$, 
$$
\B z_j \notin \{  (u_1,u_2) \in \R^2 ~|~   u_1 \ge y_i , ~ u_2 \le v_i  \} \setminus \{\B z_i\} \ .
$$
Figure \ref{fig: scatter-z} shows an example of left-top and right-bottom points for a collection of $\B{z}_{i}$'s with noisy $\B y$. It can be seen that the number of left-top and right-bottom points can be much smaller than the total number of points. 
\begin{figure}[h!]
	\centering
	\includegraphics[height=5cm,width=7cm]{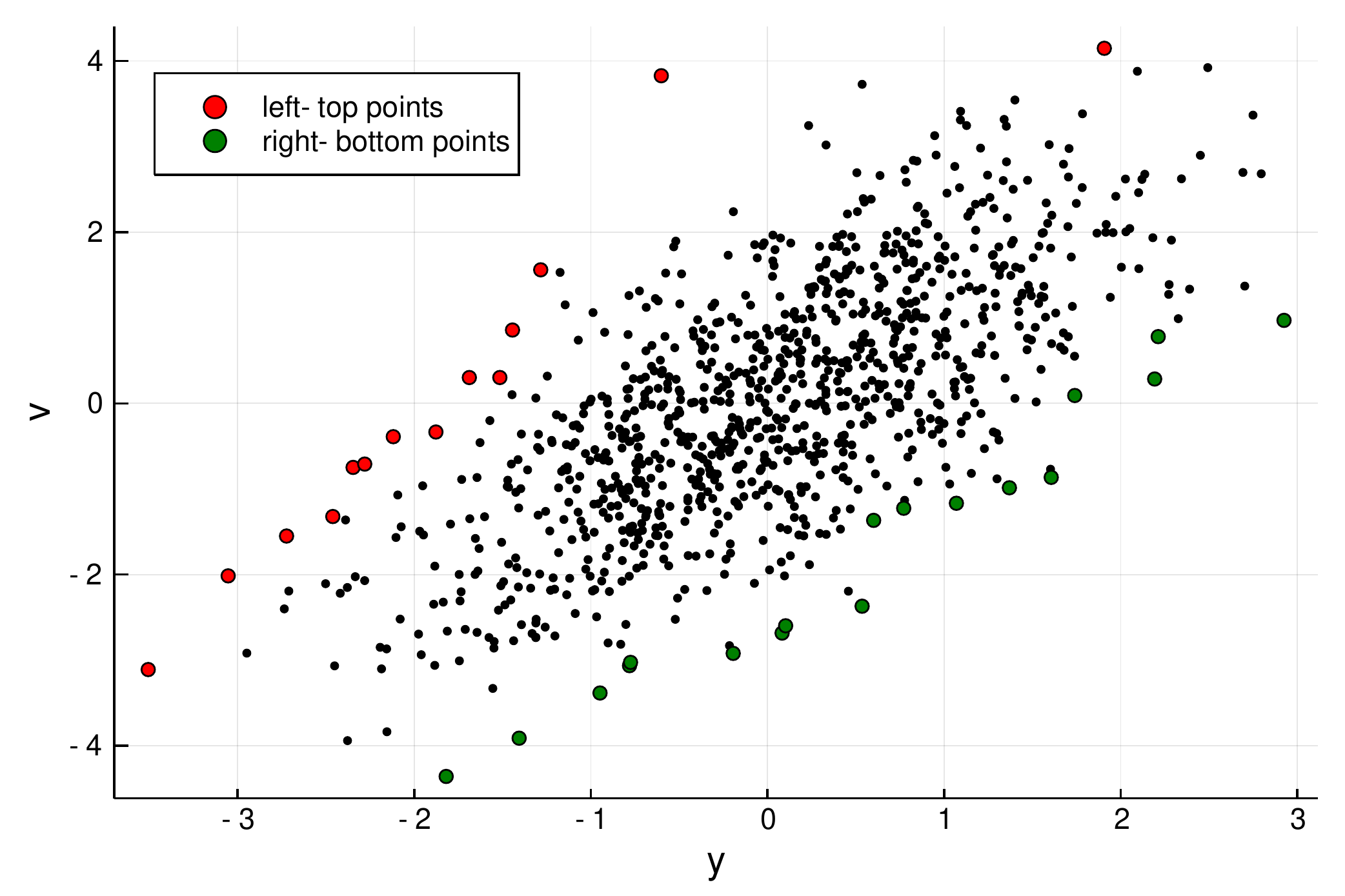}
	\label{fig: scatter-z}
	\caption{Figure illustrating the set Left-top points and right-bottom points in a 2D collection of points $\B z_i = (y_i, v_i) \in \R^2 $.}
\end{figure}

Let $(i^*,j^*)$ be an optimal solution to~\eqref{prob4}, then it must hold that one of $\{\B z_{i^*}, \B z_{j^*}\}$ is a left-top point and the other is a right-bottom point. Let $\cZ_{lt}$ and $\cZ_{rb}$ be the set of left-top and right-bottom points respectively, and define
$$
\cS_{lt} := \{i\in [n] ~| ~ \B z_{i} \in \cZ_{lt}\},~~~ 
\cS_{rb} := \{i\in [n] ~| ~ \B z_{i} \in \cZ_{rb}\} \ . 
$$
Then Problem~\eqref{prob4} is equivalent to 
\begin{equation}\label{prob5}
\min_{i\in\cS_{lt}, ~ j\in \cS_{rb}} ~~  (y_i - y_j) (v_i - v_j) 
\end{equation}
implying that it suffices to compute  values of $(y_i - y_j) (v_i - v_j)$ for $i\in\cS_{lt}$ and $j\in \cS_{rb}$. Algorithm~\ref{alg: fast swap} discusses how to compute $ \cS_{lt}$ and $\cS_{rb}$---this requires (a)~performing a sorting operation on $\B y$, which can be done once with a cost of $O(n\log(n))$; and 
(b)~two additional passes over the data with cost $O(n)$ (to be performed at every iteration of Algorithm~\ref{alg: AFW}).

The computation of \eqref{prob5} can be further simplified as discussed in the following section. 
\begin{algorithm}[h!]
	\caption{Fast algorithm for Problem~\eqref{prob4}}
	\label{alg: fast swap}
	\begin{algorithmic}
		\STATE \textbf{Input}: Vectors $\B v, \B y \in \R^n$. 
		
		\STATE \textbf{Step 1}: (Sorting) Find a permutation $\pi$ such that $y_{\pi(1)} \le y_{\pi(2)} \le \cdots \le y_{\pi(n)}$. Define $\bar y_i = y_{\pi(i)}$ and $\bar v_i = v_{\pi (i)}$ for all $i\in [n]$. 
		
		\STATE \textbf{Step 2}: (Construct $\cS_{lt}$) Initialize $\cS_{lt} = \{\pi(1)\}$ and $T = \bar v_{1}$. 
		\STATE \qquad \textbf{For} $i = 2,3,...,n$:
		\STATE \qquad\quad \textbf{If} $\bar v_i > T$: ~ $\cS_{lt} = \cS_{lt} \cup \{\pi(i)\}$; ~$T = \bar v_i$ ~~\textbf{end if}
		\STATE \qquad \textbf{end for}
		
		\STATE \textbf{Step 3}: (Construct $\cS_{rb}$) Initialize $\cS_{rb} = \{\pi (n)\}$ and $B = \bar v_n$. 
		\STATE \qquad \textbf{For} $i = n-1, n-2, ...., 1$:
		\STATE \qquad\quad \textbf{If} $\bar v_i < B$: ~ $\cS_{rb} = \cS_{rb} \cup \{\pi(i)\}$; ~$B = \bar v_i$ ~~\textbf{end if}
		\STATE \qquad \textbf{end for}
		
		\STATE \textbf{Step 4}: 
		Solve \eqref{prob5} using the methods discussed in Section \ref{sec:faster-compute-1}
		
	\end{algorithmic}
\end{algorithm}
\subsection{Faster computation of Problem~\eqref{prob5}}\label{sec:faster-compute-1} 
To simplify the computation of \eqref{prob5},
we introduce a partial order `$\preceq$' on the points in $\R^2$: For $\B p,\B q \in \R^2$, denote $\B p\preceq \B q$ if $p_1 \le q_1$ and $p_2 \le q_2$.
It is easy to check that for any two points $\B z_i, \B z_j \in \cZ_{rb}$, either it holds $\B z_i \preceq \B z_j$, or it holds $\B z_j \preceq \B z_i$. So we can write $\cZ_{rb}=\{\B z_{i_1}, ...., \B z_{i_{L'}}\}$ with
\begin{equation}\label{ordered-Zlt}
\B z_{i_1} \preceq \B z_{i_2 } \preceq \cdots \preceq \B z_{i_{L'}}. 
\end{equation}
For any $\B z_{m} \in \cZ_{lt}$, two cases can happen:
\begin{enumerate}
	\item There is no point $\B z_{i_t} \in \cZ_{rb}$ satisfying $(y_{i_t} - y_m) (v_{i_t} - v_m) \le 0$. 
	
	\item  There exist $\bar t ,\bar b \in [L']$ with $ \bar b \le \bar t$ such that 
	$ (y_m - y_{i_t}) (v_m - v_{i_t}) \le 0 $ for all $\bar b \le t \le \bar t$, and $ (y_m - y_{i_t}) (v_m - v_{i_t}) > 0 $ for all $t>\bar t$ or $t<\bar b$. 
\end{enumerate}
Because $\cZ_{rb}$ is nicely ordered as in \eqref{ordered-Zlt}, Case 1 above can be identified by a bisection method with
cost (at most) $ O(\log n)$. Similarly, for Case 2, the values of $\bar t$ and $\bar b$ can be found using bisection. 
Since the optimal value of \eqref{prob4} must be non-positive, 
we can compute $ (y_m - y_{i_t}) (v_m - v_{i_t}) $ only for $\bar b \le t \le \bar t$.  

The methods described for solving \eqref{prob4} are summarized in Algorithm \ref{alg: fast swap}. 
Finally, note that when $\dist(\B P^{(k)}, \B I_n) \ge R-1$, similar ideas are still applicable. When $\dist(\B P^{(k)}, \B I_n) = R-1$, we consider the following problem:
\begin{equation}\label{approximate2}
\min_{i,j} ~ (y_i - y_j) (v_{i} - v_j)~~~~ \text{s.t.} ~ i\in [n], ~ j\in \supp(\B P^{(k)}).
\end{equation}
Similarly, when $\dist(\B P^{(k)}, \B I_n) = R$, we consider:
\begin{equation}\label{approximate3}
\min_{i,j} ~ (y_i - y_j) (v_{i} - v_j)~~~~ \text{s.t.} ~  i, j\in \supp(\B P^{(k)}).
\end{equation}
Problems \eqref{approximate2} and \eqref{approximate3} can also be efficiently solved by finding the sets of left-top and right-bottom points and using the partial order to simplify the computation. We omit the details for brevity.

\section{Experiments}\label{sec:expts}

We perform numerical experiments to study the performance of Algorithm \ref{alg: AFW}. 

\smallskip
\noindent
\textbf{Data generation.} We consider the setup in our basic model~\eqref{model1},
where entries of $\B X\in \R^{n\times d}$ are iid $N(0,1)$; $\B \beta^*$ is generated uniformly from the unit sphere in $\R^d$ (i.e., $\|\B \beta^*\| = 1$), and $\B \beta^*$ is independent of $\B X$.  We consider two schemes for generating the permutation $\B P^*$: (a) {\emph{Random scheme}}: select $r$ coordinates uniformly from $\{1, \ldots, n\}$. (b) {\emph  {Equi-spaced scheme}}: 
Assume $y_1\le \cdots \le y_n$ (otherwise re-order the data).
	Let $a_1<\cdots <a_r$ be the sequence of $r$ equi-spaced real numbers with $a_1 = \min_{i\in [n]} y_i$ and $a_n = \max_{i\in [n]} y_i$. 
	Select $r$ indices $i_1<\cdots <i_r$ such that $i_1 = \argmin_{i\in [n] } |y_i - a_1|$ and $i_s = \argmin_{i_{s-1}+1 \le i \le n} |y_i  - a_s|$ for all $2\le s \le r$.  
After the $r$ coordinates are chosen, we generate a uniformly distributed random permutation on these $r$ coordinates.\footnote{Note that $\B P^*$ may not satisfy $ \dist(\B P^* , \B I_n) = r$, but $ \dist(\B P^* ,\B I_n)$ will be close to $r$.}

We generate $\B \ep$ (independent of $\B X$ and $\B \beta^*$) with  $\epsilon_{i} \stackrel{\text{iid}}{\sim} N(0,\sigma^2)$ for some $\sigma\ge 0$ ($\sigma = 0$ corresponds to the noiseless setting). 
{Unless otherwise specified, we set the tolerance $\mathfrak{e} = 0$ and $K=\infty$ in Algorithm \ref{alg: AFW}}.

\subsection{Experiments for the noiseless setting}

We first consider the noiseless setting ($\B \ep = \B 0$)
with different combinations of $(d,r,n)$. 
We use the \emph{random scheme} to generate the unknown permutation~$\B P^*$.
We set $R = n$ 
in 
Algorithm~\ref{alg: AFW} and a maximum iteration limit of $1000$. 
While our algorithm parameter choices are not covered by our theory, in practice when $r$ is small, our local search algorithm converges to optimality; and the number of iterations is bounded by a small constant multiple of $r$ (e.g., for $r=50$, the algorithm converges to optimality within around 60 iterations).

Figure \ref{figure: r_vs_error} presents preliminary results on examples with $n = 500$, $d\in\{20,50,100,200\}$, and 40 roughly equi-spaced values of $r \in [10,400]$.
In Figure \ref{figure: r_vs_error} [left panel], we plot the \text{Hamming distance} of the solution $\hatP$ computed by Algorithm \ref{alg: AFW} and the  underlying permutation $\B P^*$ (i.e. $\dist(\hatP, \B P^*)$) versus $r$.
In Figure \ref{figure: r_vs_error} [right panel], we present errors in estimating $\B{\beta}^*$ versus $r$. More precisely, let $ \hatbeta $ be the solution computed by Algorithm \ref{alg: AFW} (i.e. $\hatbeta = (\B X^\T \B X)^{-1}\B X^\T \hatP \B y$), then the \textit{beta error} is defined as $ \| \hatbeta - \B \beta^* \|/\|\B \beta^* \| $. For each choice of $(r,d)$, we consider the average over $50$ independent replications (the vertical bars show standard errors, which are hardly visible in the figures). 
As shown in Figure \ref{figure: r_vs_error}, when $r$ is small, the underlying permutation $\B P^*$ can be exactly recovered, and thus the corresponding \text{beta error} is also $0$. As $r$ becomes larger, Algorithm \ref{alg: AFW} fails to recover $\B P^*$ exactly; and   $\dist(\B P^*, \hatP)$ is close to the maximal possible value $n = 500$.
In contrast, the estimation error appears to vary more smoothly: As the value of $r$ increases, \emph{beta error} increases. We also observe that the recovery of $\B{P}^*$ depends upon the number of covariates $d$ --- permutation recovery performance deteriorates with increasing $d$.
This is consistent with our theory suggesting that the performance of our algorithm depends upon both $r$ and $d$.

\begin{figure}[h]
	\begin{minipage}[t]{0.48\linewidth} 
		\centering 
		\includegraphics[height=4.5cm,width=6.3cm]{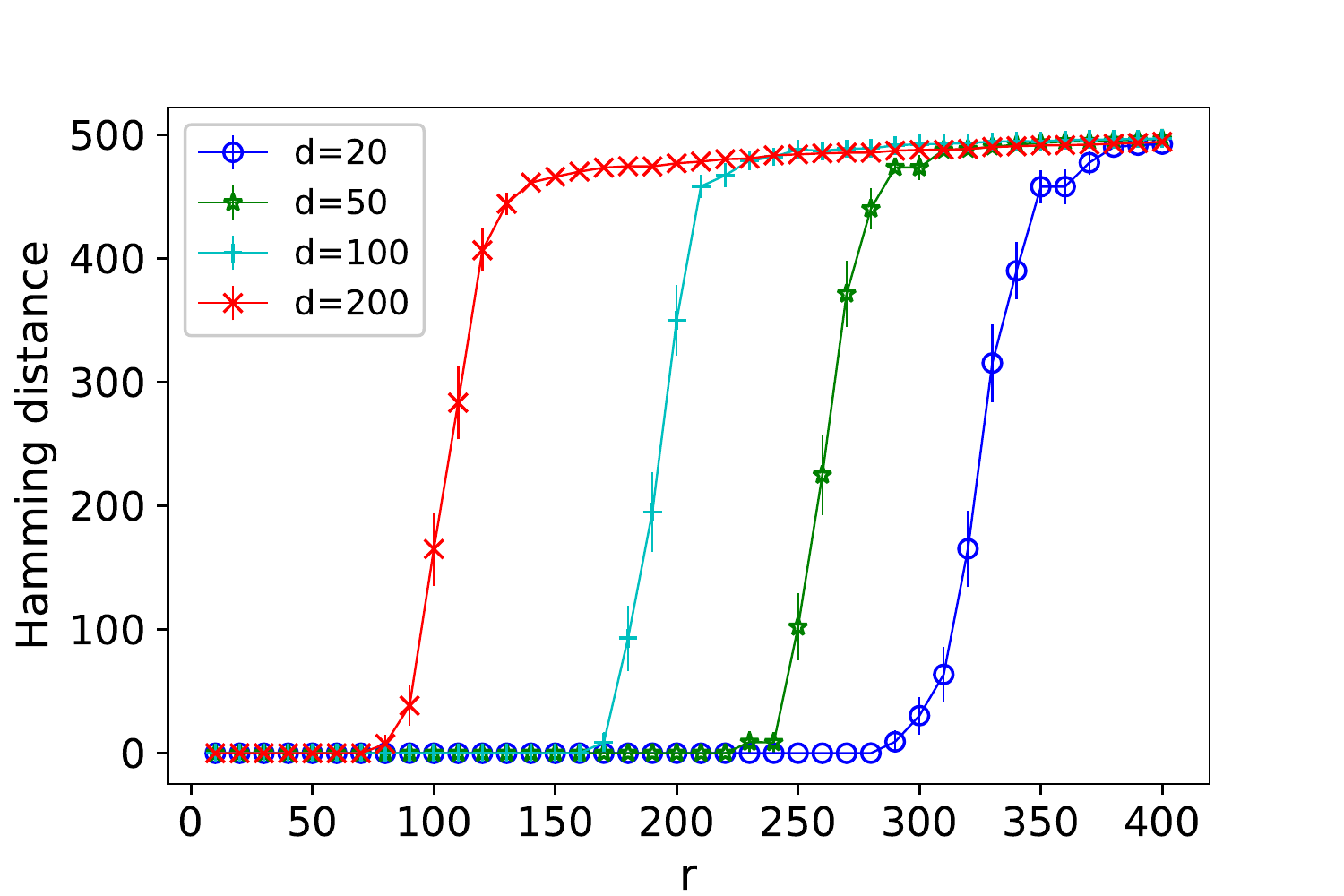} 
	\end{minipage}
	\begin{minipage}[t]{0.48\linewidth} 
		\centering 
		\includegraphics[height=4.5cm,width=6.3cm]{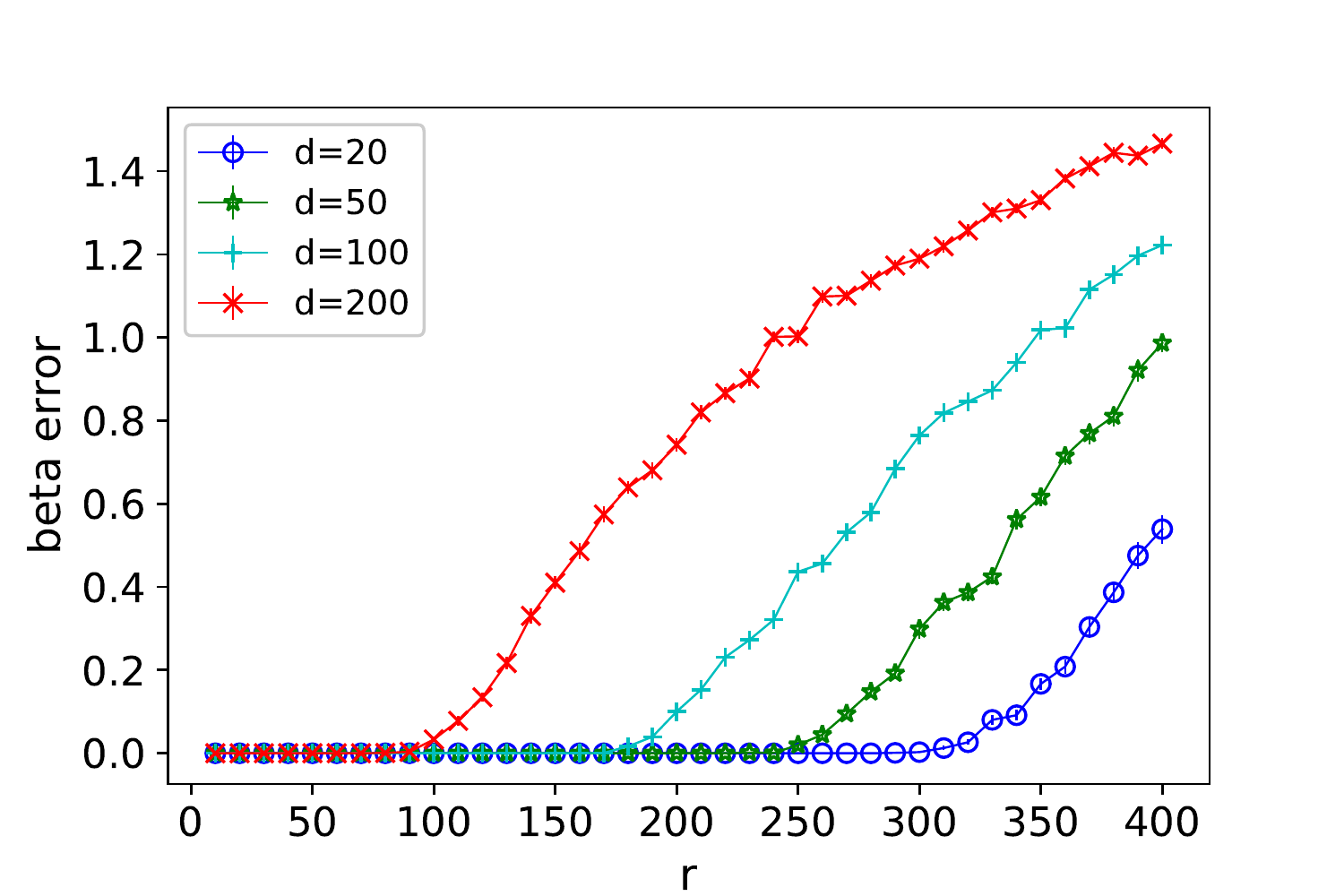}
	\end{minipage} 
	\vspace{-1em}
	\caption{Left: Values of Hamming distance $\dist(\hatP, \B P^*)$ versus $r$. 
		Right: Values of beta error $\| \hatbeta - \B \beta^* \| / \| \B \beta^*\|$ versus $r$.} 
	\label{figure: r_vs_error}
\end{figure}

\subsection{Experiments for the noisy setting}
We explore the performance of Algorithm~\ref{alg: AFW} under the noisy setting ($\B \ep \neq \B 0$).

\smallskip

\noindent \textbf{Performance for different values of $R$:} We denote \textit{Relative Obj} as the objective value computed by Algorithm \ref{alg: AFW} divided by $\| \tdH \B\ep\|^2$. 
Figure \ref{fig: early stopping} presents the Relative Obj, beta error and Hamming distance of the local search algorithm with different values of $R$ (x-axis corresponds to the values of $R$). 
Here we consider $n=1000$, $d=10$, $r=10$ and $\sigma=0.1$; and use the {\emph  {equi-spaced scheme}} to choose the mismatched coordinates in $\B P^*$.
We highlight the value at $R = r = 10$ by a red point. 
As shown in Figure \ref{fig: early stopping}, as $R$ increases, 
the Relative Obj decreases below $1$ -- this is consistent with our theory stating that with a proper choice of $R$, the final objective value will be below a constant multiple of $\|\tdH \B\ep\|^2$. 

As $R$ increases, different from the Relative Obj profile,
the \text{beta error} and \text{Hamming distance} first decrease then increase. 
This appears to suggest that when $R$ is too large, Algorithm~\ref{alg: AFW} can overfit and further regularization may be necessary to mitigate overfitting. A detailed investigation of this matter is left as future work. 
In this example, the best beta error and Hamming distance are achieved when $R$ equals $r$. 
Note that in Figure \ref{fig: early stopping} [left panel], the \text{Relative Obj} is close to $1$ when we choose $R $ close to $r$. 
Therefore, if we have a good estimate of the noise level $\sigma$ (but the exact value of $r$ is not available), we can choose a value of $R$ at which the Relative Obj is approximately $1$. 

Finally, we note that in the noisy case, the local search method cannot exactly recover $\B P^*$. Indeed, in the noisy case, if a solution to \eqref{intro: problem0} has to exactly recover $\B P^*$, we need to take a smaller value of $\sigma$ (see discussions in \cite{pananjady2017linear}). 
Even though in our example, we cannot exactly recover $\B P^*$, we may still be able to obtain a good estimate for
$\B \beta^*$---see Figure \ref{fig: early stopping} [middle panel].

\begin{figure}[]
	\resizebox{\textwidth}{!}{
		\begin{minipage}[t]{0.3\linewidth} 
			\centering 
			\includegraphics[height=3cm,width=3.9cm]{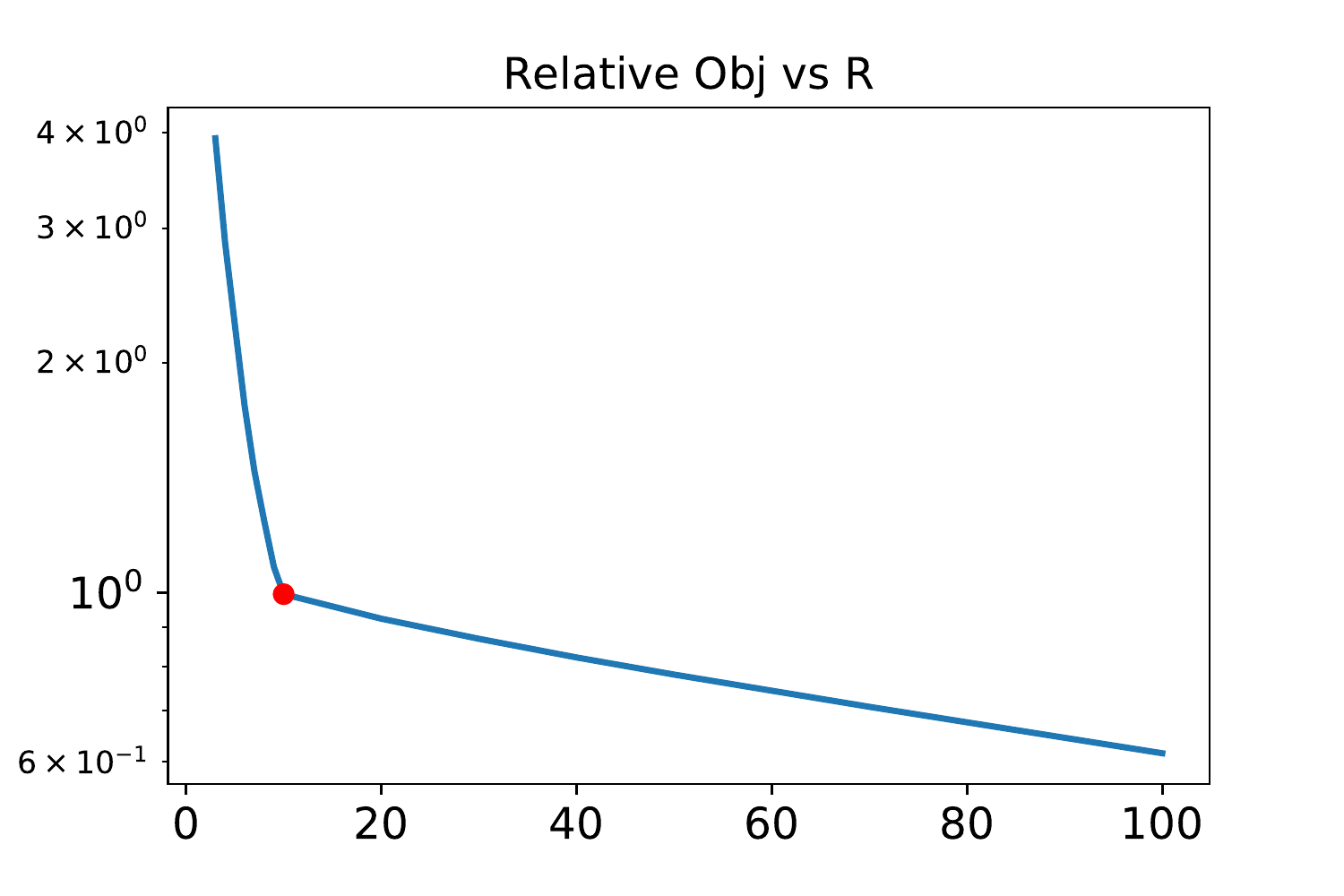} 
		\end{minipage}
		\begin{minipage}[t]{0.3\linewidth} 
			\centering 
			\includegraphics[height=3cm,width=3.9cm]{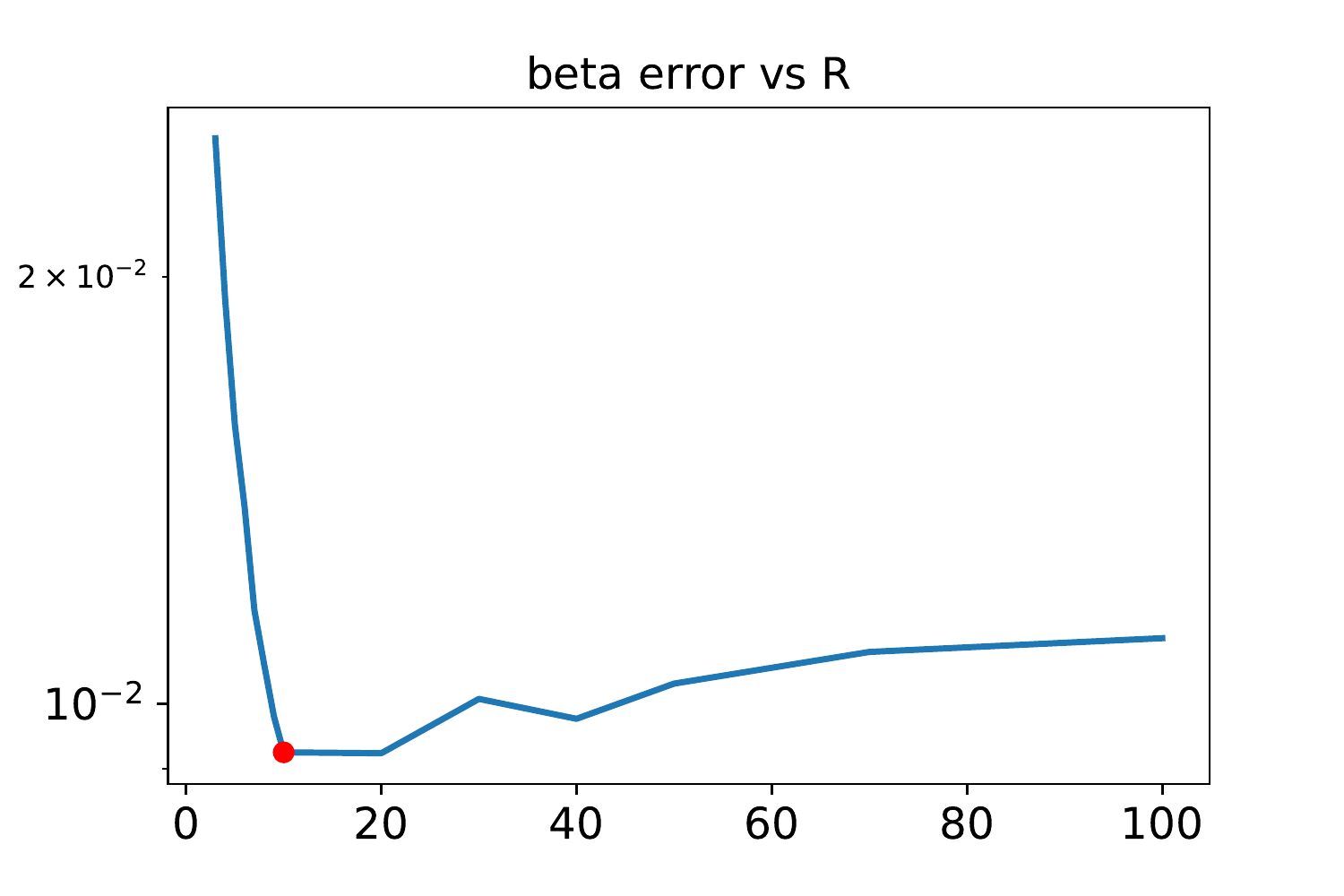} 
		\end{minipage} 
		\begin{minipage}[t]{0.3\linewidth} 
			\centering 
			\includegraphics[height=3cm,width=3.9cm]{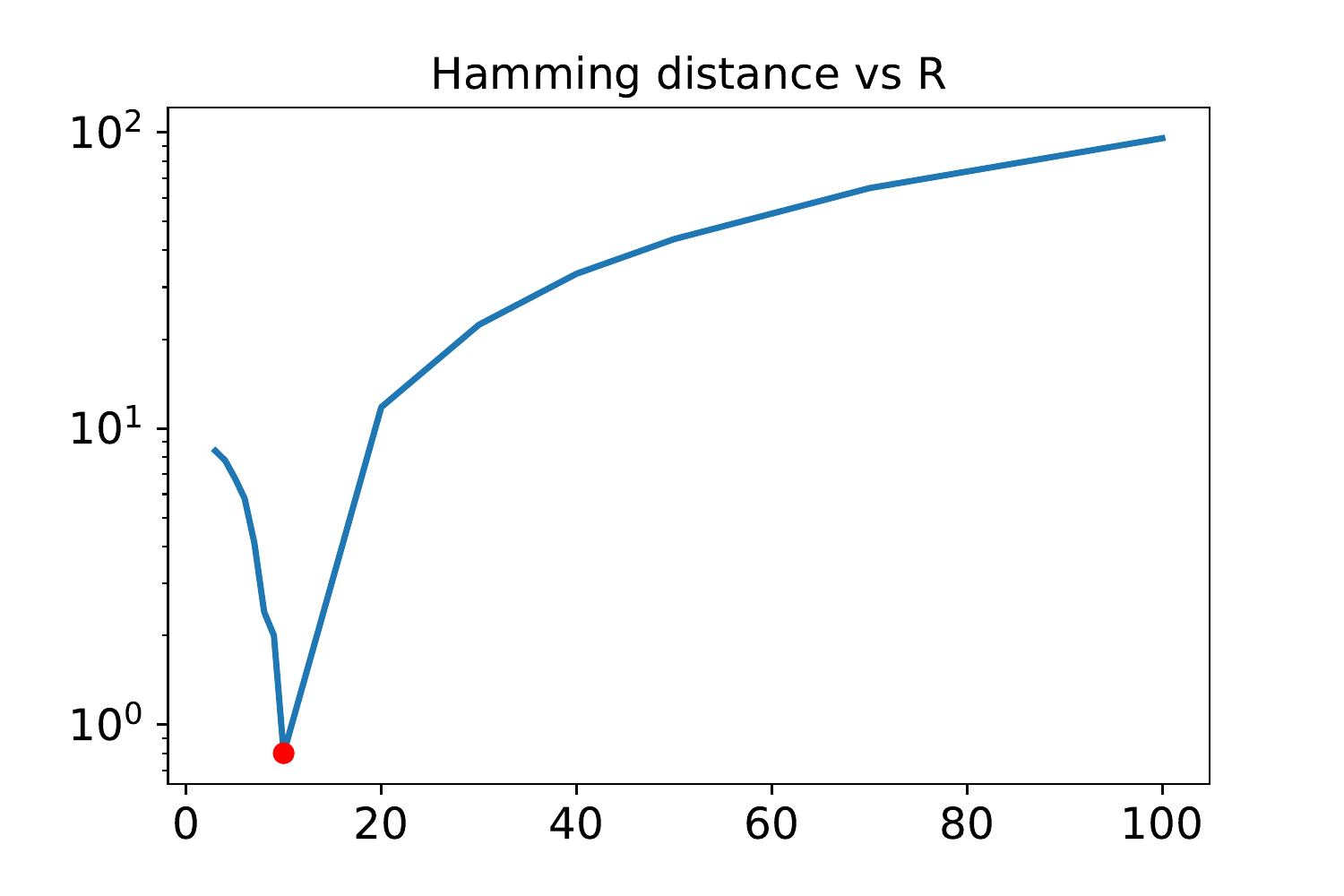} 
		\end{minipage} 
		\vspace{-1em}
	}
	\caption{Experiment on an instance with $n=1000$, $d=10$, $r=10$ and $\sigma=0.1$. 
		Left: Relative Obj vs $R$. ~Middle: beta error vs $R$. ~Right: Hamming distance vs $R$. The circled red point corresponds to $R = r$. 
	}
	\label{fig: early stopping}
\end{figure}

\smallskip

	\noindent \textbf{Estimating $\B{P}^*, \B\beta^*$ under different noise levels:}
	For a given $\sigma$ (standard deviation of the noise), let \textit{relative beta error} be the value $\| \hat{\B\beta} - \B\beta^* \| / ( \sigma\| \B\beta^*\|)$, 
	where $\hat{\B P}$ and $\hat{\B \beta}$ are the estimates available from 
	Algorithm~\ref{alg: AFW} upon termination. 
	
	Consider an example with $n = 500$ and $d=r=10$ and different values of $\sigma\in \{ 0.01, 0.03, 0.1, 0.3, 1.0\}$, 
	and use the \textit{random scheme} to generate the unknown permutation $\B P^*$. 
	We run Algorithm \ref{alg: AFW} with the setting $R = r$. 
	Figure \ref{fig: different noise} presents the values of Hamming distance and relative beta error under different noise levels. 
	In Figure \ref{fig: different noise} [left panel], it can be seen that as $\sigma$ increases, the Hamming distance also increases, and recovering $\B P^*$ becomes harder as the noise level becomes larger. In Figure \ref{fig: different noise} [right panel], we can see that the relative beta error almost does not change under different values of $\sigma$. This appears to be consistent with our conclusion in Theorem \ref{theorem: consistency} that $\| \hat{\B\beta} - \B \beta^* \|$ will be bounded by a value proportional to $\sigma$. 
	
	\begin{figure}[]
		\resizebox{\textwidth}{!}{
			\begin{minipage}[t]{0.5\linewidth} 
				\centering 
				\includegraphics[height=4cm,width=5.9cm]{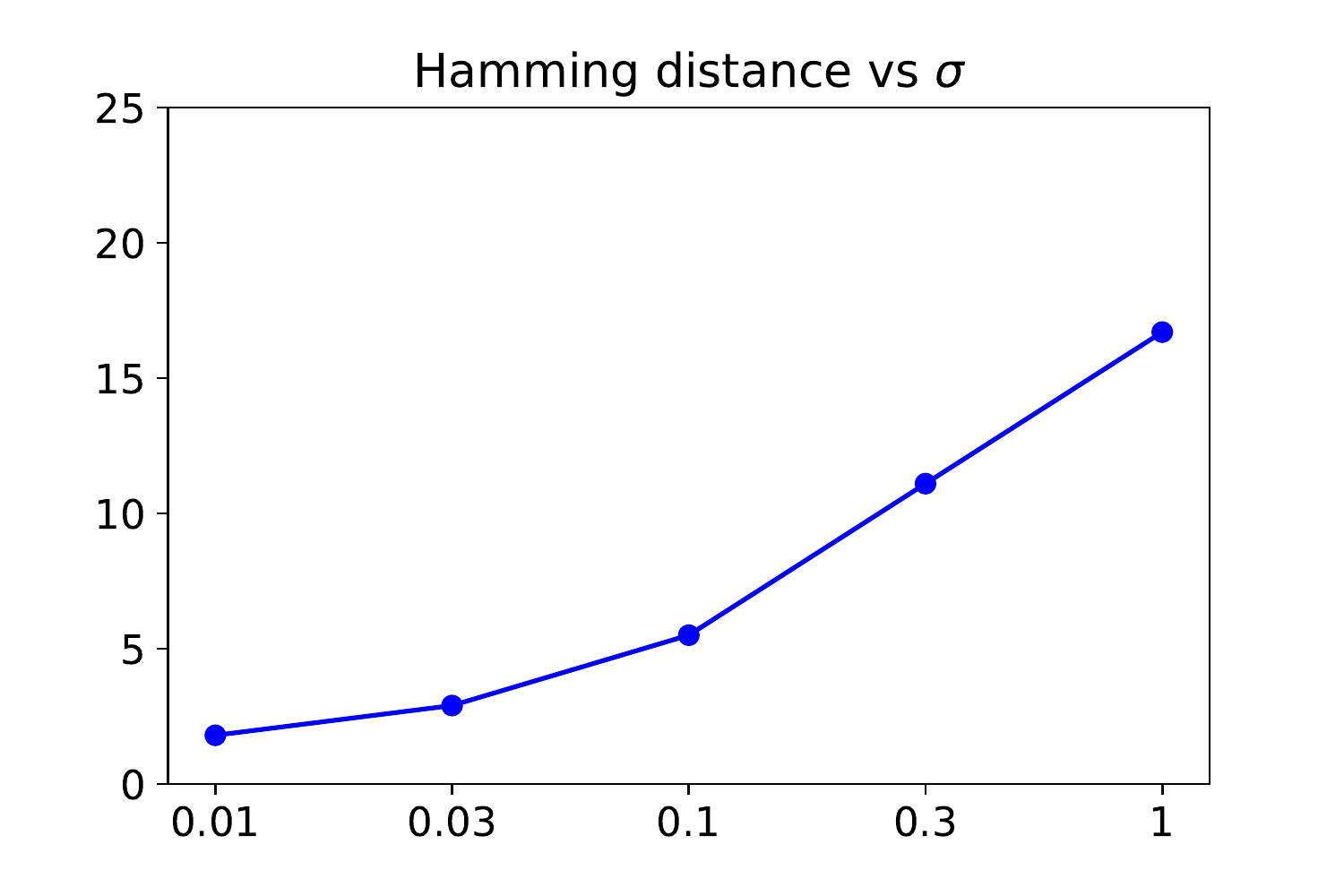} 
			\end{minipage}
			\begin{minipage}[t]{0.5\linewidth} 
				\centering 
				\includegraphics[height=4cm,width=5.9cm]{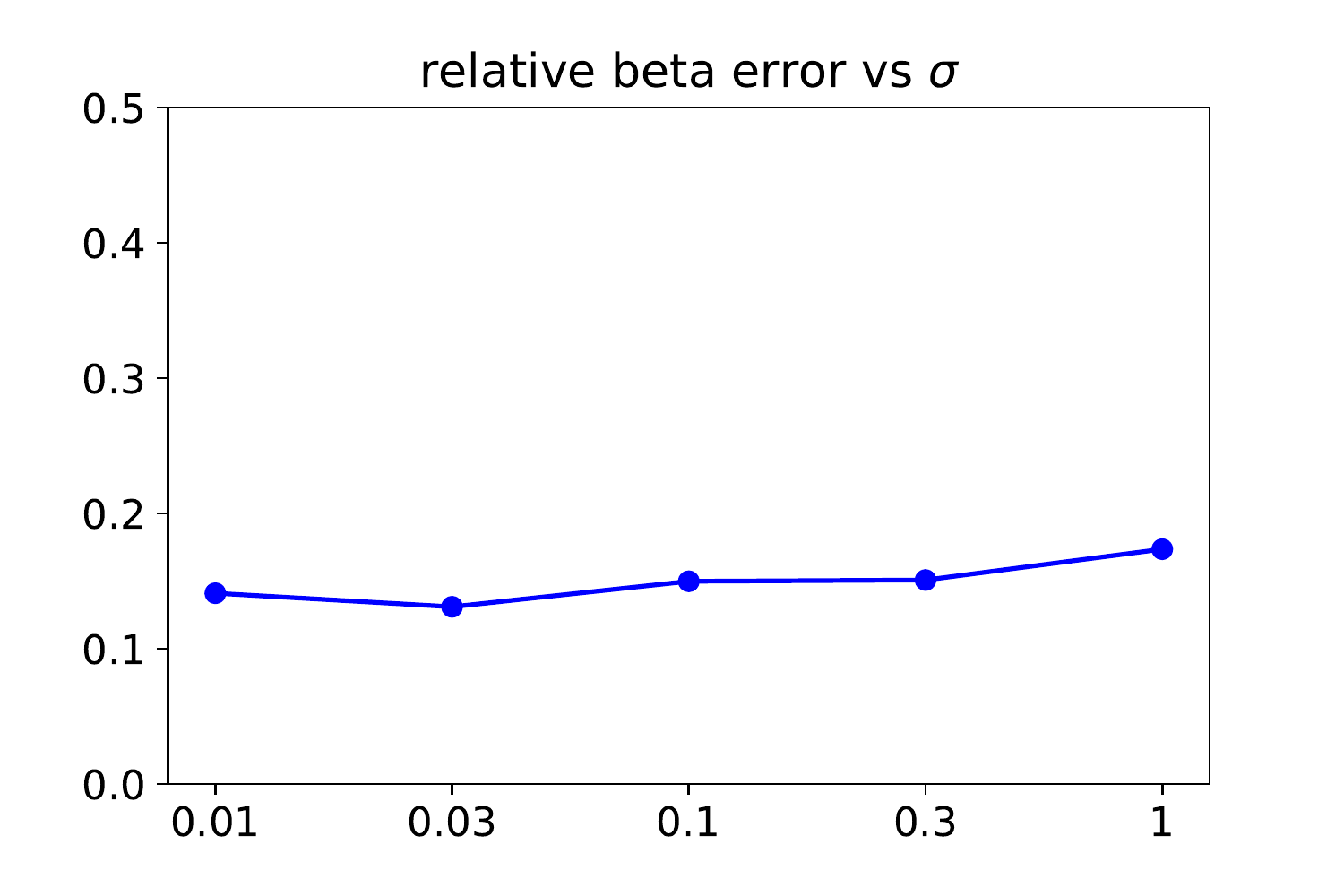} 
			\end{minipage} 
		}
		\caption{Experiment on an instance with $n=500$, $d=10$, $r=10$, and different noise levels $\sigma\in \{0.01, 0.03, 0.1, 0.3, 1.0\}$. Left: Hamming distance vs $\sigma$. Right: relative beta error vs $\sigma$.
		}
		\label{fig: different noise}
	\end{figure}

\subsection{Comparisons with existing methods}
We compare across the following methods for~\eqref{intro: problem1}:
\begin{itemize}
	\item \texttt{AltMin}: The alternating minimization method of~\cite{haghighatshoar2017signal}. We initialize with $\B P =\B  I_n$ and $\B \beta = \B 0$, and
	alternately minimize over $\B P$ and $\B \beta$  until no improvement can be made.
	\item \texttt{StoEM}: The stochastic expectation maximization  method \cite{abid2018stochastic}. We run the algorithm for 30 steps under the default setting. 
	\item \texttt{S-BD}:  
	The robust regression relaxation method of~\cite{slawski2019linear}. 
	We set the regularization parameter $\lam = 4(1+M)\sigma \sqrt{2\log(n)/n}$ with $M=1$ (this value of $\lam$ is given by Theorem 2 of \cite{slawski2019linear}). 
\end{itemize}
For both \texttt{AltMin} and \texttt{StoEM}, we use the implementation provided in the repository\footnote{\url{https://github.com/abidlabs/stochastic-em-shuffled-regression}} accompanying paper~\cite{abid2018stochastic}.
We compare these methods with Algorithm~\ref{alg: AFW} (denoted by \texttt{Alg-1}) and the variant of Algorithm~\ref{alg: AFW} with the fast approximate local search steps introduced in Section~\ref{section: fast local search steps} (denoted by \texttt{Fast-Alg-1}). 
We run \texttt{Alg-1} and \texttt{Fast-Alg-1} by setting $R = r$.

Table~\ref{table: compare} presents the \text{beta errors} of different methods on an example with $n = 500$, $d=10$; and $\B P^* $ chosen by the \emph{random scheme}. The presented values are the average of $10$ independent replications. 

As shown in Table~\ref{table: compare}, in the noiseless setting, \texttt{Alg-1}
can recover the true value of $\B \beta^*$ for $r$ up to $300$.
\texttt{Fast-Alg-1} is quite similar to \texttt{Alg-1}, though the performance is marginally worse for larger values of $r$.
In contrast, \texttt{AltMin} and \texttt{StoEM} are not able to exactly estimate $\B \beta^*$ even for small values of $r$, and for all values of $r$ they have large values of beta error. \texttt{S-BD} which is based on convex optimization, can also recover $\B \beta^*$ for $r\le 200$, but for $r = 300$ its performance degrades and has a large beta error. 

In the noisy setting, with $\sigma = 0.1$, \texttt{Alg-1} and \texttt{Fast-Alg-1} have similar performance, and compute a value of $\B \beta$ with much smaller beta error compared to \texttt{AltMin} and \texttt{StoEM}. For small values of $r$ ($\le 100$), \texttt{S-BD} has a similar performance to \texttt{Alg-1} and \texttt{Fast-Alg-1}, while for $r = 200$ and $300$, its performance degrades and has a much larger beta error than \texttt{Alg-1} and \texttt{Fast-Alg-1}. 

\begin{table}[]
	\centering
	\begin{tabular}{|cc|ccccc|}
		\hline
		&     & \multicolumn{5}{c|}{beta error}                                                 \\ \hline
		\multicolumn{1}{|c|}{$\sigma$}             & $r$   & \texttt{Alg-1} & \texttt{Fast-Alg-1} & \texttt{AltMin} & \texttt{StoEM} & \texttt{S-BD} \\ \hline
		\multicolumn{1}{|c|}{\multirow{4}{*}{0}}   & 50  & 0.000       & 0.000        & 0.001           & 0.154          & 0.000           \\
		\multicolumn{1}{|c|}{}                     & 100 & 0.000       & 0.000        & 0.027           & 0.483          & 0.000           \\
		\multicolumn{1}{|c|}{}                     & 200 & 0.000       & 0.001        & 0.121           & 0.884          & 0.000           \\
		\multicolumn{1}{|c|}{}                     & 300 & 0.000       & 0.001        & 0.190           & 0.944          & 0.123           \\ \hline
		\multicolumn{1}{|c|}{\multirow{4}{*}{0.1}} & 50  & 0.018       & 0.018        & 0.061           & 0.207          & 0.021           \\
		\multicolumn{1}{|c|}{}                     & 100 & 0.021       & 0.021        & 0.080           & 0.421          & 0.029           \\
		\multicolumn{1}{|c|}{}                     & 200 & 0.019       & 0.022        & 0.116           & 0.881          & 0.063           \\
		\multicolumn{1}{|c|}{}                     & 300 & 0.047       & 0.052        & 0.213           & 0.942          & 0.282           \\ \hline
	\end{tabular}
	\caption{Comparison with existing methods on an example with $n=500$, $d=10$}
	\label{table: compare}
\end{table}

\subsection{Scalability to large instances}
We explore the scalability of our proposed approach to large $n$ problems (from $n\approx 10^4$ up to $n\approx 10^7$)---for these instances, \texttt{Fast-Alg-1} appears to be computationally attractive. 
All these experiments are run on the MIT engaging cluster with 1 CPU and 16GB memory. The codes are written in Julia 1.2.0. 

We consider examples with $d = 100$, $r = 50$ and $n\in \{10^4, 10^5,$ $ 10^6, 10^7\}$. 
Here, the mismatched coordinates of $\B P^*$ are chosen based on the \emph{random scheme}. We set $R = r$ for all instances. For these examples, we do not form
the $n \times n$ matrices $\tdH$ or $\B H$ explicitly, but compute a thin QR decomposition of $\B X$ ($\B Q\in \R^{n\times d}$) and maintain $\B Q $ in memory. The results are presented in Table~\ref{table: runtime}, where ``total time" is the total runtime of \texttt{Fast-Alg-1} upon termination, ``QR time" is the time used for the QR-decomposition, and ``iterations" are the number of iterations taken by the local search method till convergence. All numbers reported in the table are averaged across 10 independent replications. As shown in Table \ref{table: runtime}, \texttt{Fast-Alg-1} can solve examples with $n$ up to $10^7$ (and $d = 100$) within around 200 seconds --- this runtime (s) includes the  time to complete around $60$ iterations of local search steps and the time to do the QR decomposition.  The total runtime is empirically seen to be of the order $O(n)$ as $n$ increases. 
Note that the QR time (i.e., time to perform the QR decomposition) can be viewed as a benchmark runtime for ordinary least squares. Hence,  for the examples considered, 
the runtime of \texttt{Fast-Alg-1} appears to be a constant multiple of the runtime of performing ordinary least squares (the total time will increase with $r$ and/or $R$).
Interestingly, it can be seen that the runtimes for the noisy case ($\sigma = 0.1$) are smaller than the noiseless case ($\sigma = 0$). We believe this is because Algorithm~\ref{alg: fast swap} is faster for the noisy case.
In particular, for the noisy case, we empirically observe the number of ``left-top" points and ``right-bottom" points to be fewer than those in the noiseless case.

\begin{table}[h]
	\centering
	\begin{tabular}{|c|ccc|ccc|}
		\hline
		& \multicolumn{3}{c|}{$\sigma = 0$} & \multicolumn{3}{c|}{$\sigma = 0.1$} \\ \hline 
		n               & total time(s) & QR time(s) & iterations & total time(s)  & QR time(s)  & iterations \\ \hline 
		$10^4$          &    0.2        &    0.1        &   57.8      &    0.2         &     0.1        &    58.3     \\ 
		$10^5$ &    2.2        &    0.7        &   58.9      &    1.5         &     0.7        &    58.8     \\ 
		$10^6$          &    21.3        &    6.3        &   58.7      &    13.9         &     6.4        &    56.4     \\ 
		$10^7$ &    212.6        &    64.5        &   58.5      &    152.9         &     64.8        &    60.7     \\ \hline 
	\end{tabular}
	\caption{Runtimes of \texttt{Fast-Alg-1} on instances with $d = 100$, $r=50$ and different $n$. For reference, the time taken by \texttt{Alg-1} in the case $n=10^4$ is 100 (s), which is 500$X$ slower than \texttt{Fast-Alg-1}.}
	\label{table: runtime}
\end{table}

\section{Acknowledgments} The authors would like to thank the anonymous referees for their constructive comments that led to an improvement of the paper.

\renewcommand{\theequation}{A.\arabic{equation}}
\appendix

\section{Proofs and technical results}

\subsection{Technical lemmas}
\begin{lemma}\label{lemma: sample covariance estimation}
	(Covariance estimation)
	Let $\B X =  [\B x_1,...,\B x_n]^\T $ be a random matrix in $ \R^{n\times d} $. Suppose rows $\B x_1,...,\B x_n$ are iid zero-mean random vectors in $\R^d$ with covariance matrix $\B \Sigma\in \R^{d\times d}$. Suppose $\|\B x_i\|\le b$ almost surely. 
	Then for any $t >0$, it holds
	$$
	\Pb \lt(   \itl  \frac{1}{n} \B X^\T \B X - \B \Sigma \itl_2 \ge t \itl \B \Sigma\itl_2 \rt) \le 2d \exp \lt( - \frac{n t^2 \itl \B \Sigma \itl_2 }{2b^2(1+t)} \rt). 
	$$
\end{lemma}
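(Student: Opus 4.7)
The plan is to recognize this as a direct application of the matrix Bernstein inequality to a sum of iid mean-zero Hermitian random matrices, and then bound the two ingredients (uniform operator-norm bound on the summands and the matrix variance) in terms of $b$ and $\itl \B\Sigma\itl_2$.

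First I would write
\[
\frac{1}{n}\B X^\T \B X - \B\Sigma = \frac{1}{n} \sum_{i=1}^n \B Y_i, \qquad \B Y_i := \B x_i \B x_i^\T - \B\Sigma,
\]
so that $\{\B Y_i\}$ are iid, mean-zero, symmetric $d\times d$ random matrices. The matrix Bernstein inequality (e.g.\ Tropp's user-friendly bounds) states that for iid mean-zero Hermitian $\B Y_i$ with $\itl \B Y_i \itl_2 \le K$ a.s.\ and $v \ge \itl \sum_i \Ex[\B Y_i^2]\itl_2$, one has
\[
\Pb\!\left( \itl \textstyle\sum_i \B Y_i \itl_2 \ge s \right) ~\le~ 2d \exp\!\left( -\frac{s^2/2}{v + Ks/3} \right).
\]

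Next I would compute the two parameters. Since $\B x_i \B x_i^\T \succeq 0$ and $\B\Sigma \succeq 0$, the eigenvalues of $\B Y_i$ lie in $[-\itl \B\Sigma\itl_2, \|\B x_i\|^2]$, and because $\itl \B\Sigma\itl_2 \le \Ex\|\B x_i\|^2 \le b^2$, I get the uniform bound $\itl \B Y_i \itl_2 \le b^2 =: K$ almost surely. For the variance, expanding the square and using $\Ex[\B x_i \B x_i^\T]=\B\Sigma$,
\[
\Ex[\B Y_i^2] = \Ex\!\left[ \|\B x_i\|^2 \, \B x_i \B x_i^\T \right] - \B\Sigma^2 ~\preceq~ b^2 \, \Ex[\B x_i \B x_i^\T] = b^2 \B\Sigma,
\]
so $\itl \sum_i \Ex[\B Y_i^2] \itl_2 \le n b^2 \itl \B\Sigma\itl_2 =: v$.

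Finally I apply the Bernstein bound with $s = n t \itl\B\Sigma\itl_2$. Substituting $K = b^2$ and $v = nb^2\itl\B\Sigma\itl_2$ gives
\[
\Pb\!\left( \itl \textstyle\frac{1}{n}\B X^\T\B X - \B\Sigma\itl_2 \ge t\itl\B\Sigma\itl_2 \right) ~\le~ 2d \exp\!\left( -\frac{n t^2 \itl\B\Sigma\itl_2 / 2}{b^2 (1 + t/3)} \right),
\]
and since $1 + t/3 \le 1 + t$ this is at most $2d\exp\!\bigl(-n t^2 \itl\B\Sigma\itl_2 / (2b^2(1+t))\bigr)$, which is the stated bound. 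There is no real obstacle here; the only conceptual step is quoting matrix Bernstein and tracking the $b^2$ vs.\ $\itl\B\Sigma\itl_2$ bounds carefully (in particular, using that $\B x_i\B x_i^\T$ and $\B\Sigma$ are both PSD to get $K=b^2$ rather than $K=2b^2$), after which the rest is an algebraic substitution.
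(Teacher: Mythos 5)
Your proof is correct and is essentially the paper's own approach: the paper simply defers to Corollary 6.20 of Wainwright's book, whose proof is exactly this matrix Bernstein argument with the same two ingredients ($\itl \B x_i\B x_i^\T - \B\Sigma\itl_2 \le b^2$ and $\Ex[(\B x_i\B x_i^\T-\B\Sigma)^2] \preceq b^2\B\Sigma$). Your derivation even yields the slightly sharper denominator $1+t/3$, which you correctly relax to $1+t$ to match the stated bound.
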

See e.g. Corollary 6.20 of \cite{wainwright2019high} for a proof.

\begin{lemma}\label{lemma: supp-inclusion}
	Suppose three permutation matrices $\B P, \tdP,  \B Q \in \Pi_n$ satisfy 
	$$
	\supp(\tdP \B Q^{-1}) \subseteq \supp(\B P \B Q^{-1}) \quad {\rm and} \quad \supp(\B Q) \subseteq \supp(\B P) \ ,
	$$
	then it holds $\supp(\tdP) \subseteq \supp( \B P) $.
\end{lemma}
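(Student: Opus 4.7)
The plan is to prove the containment by contrapositive: I will show that if $i \notin \supp(\B P)$, then $i \notin \supp(\tdP)$. Recalling the paper's convention, $\supp(\B P) = \{i : \pi_{\B P}(i) \neq i\}$, so this amounts to showing $\pi_{\B P}(i) = i$ implies $\pi_{\tdP}(i) = i$.

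The key ingredient will be the composition rule for the associated permutations. From the definition $\pi_{\B A}(i) = j \iff \B e_i^\T \B A = \B e_j^\T$, a short computation gives $\pi_{\B A \B B}(i) = \pi_{\B B}(\pi_{\B A}(i))$ for any two permutation matrices $\B A, \B B$. In particular, $\pi_{\B P \B Q^{-1}}(i) = \pi_{\B Q^{-1}}(\pi_{\B P}(i))$ and similarly for $\tdP \B Q^{-1}$.

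The steps I would carry out are as follows. Fix $i$ with $\pi_{\B P}(i) = i$. First, from $\supp(\B Q) \subseteq \supp(\B P)$ (contrapositive), I get $i \notin \supp(\B Q)$, hence $\pi_{\B Q}(i) = i$, and therefore $\pi_{\B Q^{-1}}(i) = i$ as well. Next, combining these two facts via the composition rule,
\[
\pi_{\B P \B Q^{-1}}(i) = \pi_{\B Q^{-1}}(\pi_{\B P}(i)) = \pi_{\B Q^{-1}}(i) = i,
\]
so $i \notin \supp(\B P \B Q^{-1})$. By the first hypothesis $\supp(\tdP \B Q^{-1}) \subseteq \supp(\B P \B Q^{-1})$ (again contrapositive), $i \notin \supp(\tdP \B Q^{-1})$, i.e.\ $\pi_{\B Q^{-1}}(\pi_{\tdP}(i)) = i$. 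Applying $\pi_{\B Q}$ to both sides and using $\pi_{\B Q}(i) = i$ gives $\pi_{\tdP}(i) = i$, which is the desired conclusion.

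There is no real obstacle here: the lemma is essentially a bookkeeping statement. The only point that warrants care is being consistent with the composition convention $\pi_{\B A \B B} = \pi_{\B B} \circ \pi_{\B A}$ implicit in the paper's definition of $\pi_{\B P}$ via row actions; once that is fixed, the argument is a two-line chase.
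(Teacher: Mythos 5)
Your proof is correct and is essentially identical to the paper's own argument: both proceed by contrapositive, showing $i \notin \supp(\B P)$ forces $i \notin \supp(\B Q)$, then $i \notin \supp(\B P \B Q^{-1})$, then $i \notin \supp(\tdP \B Q^{-1})$, and finally $i \notin \supp(\tdP)$. The only difference is cosmetic: the paper chases the identities $\B e_i^\T \B P = \B e_i^\T$ at the level of row vectors, while you phrase the same chain via the permutation maps $\pi_{(\cdot)}$ and the composition rule $\pi_{\B A \B B} = \pi_{\B B} \circ \pi_{\B A}$, which you state and use correctly.
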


\begin{proof}
	We just need to show that for any $i\notin \supp(\B P)$, it holds $  i\notin \supp(\tdP)$. Let $i\notin \supp(\B P)$, then $\B e_i^\T \B P = \B e_i^\T $. Since $ \supp(\B Q ) \subseteq \supp(\B P) $, we also have $ \B e_i^\T \B Q = \B e_i^\T  $. So it holds $  \B e_i^\T \B P \B Q^{-1} = \B e_i^\T $ or equivalently $ i \notin \supp(\B P \B Q^{-1} ) $. Because $ \supp(\tdP \B Q^{-1} )  \subseteq \supp(\B P \B Q^{-1} )$, we have $ i \notin \supp(\tdP \B Q^{-1} ) $, or equivalently $  \B e_i^\T \tdP \B Q^{-1} = \B e_i^\T $. This implies $ \B e_i^\T \tdP = \B e_i^\T \B Q = \B e_i^\T$, or equivalently, $ i \notin \supp(\tdP) $. 
\end{proof}

\begin{lemma}\label{lemma: step-bound}
	Suppose Assumption \ref{ass-main0} holds. 
	Let $\tdP ,\B P \in \cN_R(\B I_n) \subseteq \Pi_n$ with $\dist(\B P, \tdP) \le 4$. Let 
	$$
	\Dt(\tdP, \B P) := \| \tdP \B y - \B P^* \B y \|^2 - 	\| \B P \B y - \B P^* \B y \|^2 , \quad 
	\Dt_{\tdH} (\tdP, \B P) := 	\| \tdH \tdP \B y \|^2 - 	\| \tdH  \B P \B y \|^2 \ ,
	$$
	then it holds $| \Dt(\tdP, \B P) - \Dt_{\tdH} (\tdP, \B P)|
	< 
	L^2/5 $.
\end{lemma}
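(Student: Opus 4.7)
\noindent\textbf{Proof proposal for Lemma \ref{lemma: step-bound}.}

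The plan is to first derive a clean formula for the discrepancy $\Delta_{\tdH}(\tdP,\B P) - \Delta(\tdP,\B P)$ by exploiting two structural identities: that $\tdH \B P^* \B y = \tdH \B \ep$ (which follows from the model \eqref{model1} together with $\tdH \B X = \B 0$), and that $\|\tdH \B w\|^2 = \|\B w\|^2 - \|\B H \B w\|^2$ for any $\B w$. Writing $\B u := \tdP \B y - \B P^* \B y$ and $\B v := \B P \B y - \B P^* \B y$, and using that permutations preserve the $\ell_2$ norm, I expect to arrive at
\begin{equation}
\Delta_{\tdH}(\tdP, \B P) - \Delta(\tdP, \B P) ~=~ \| \B H \B v \|^2 - \| \B H \B u\|^2 + 2 \la \B u - \B v, \tdH \B \ep \ra \ . \nonumber
\end{equation}

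The second step is to bound the two pieces separately using the sparsity structure. By hypothesis, $\B u, \B v \in \cB_{2R}$ (since $R \ge r$ gives $\dist(\tdP,\B P^*) \le R+r \le 2R$, and likewise for $\B P$), while $\B u - \B v = (\tdP - \B P)\B y \in \cB_{4}$. Using Assumption \ref{ass-main0}(1), $\|\B u\|,\|\B v\| \le \sqrt{2R}\,U$ and $\|\B u - \B v\| \le 2U$. For the quadratic difference I will use $a^2 - b^2 = (a-b)(a+b)$:
\begin{equation}
\big| \| \B H \B v\|^2 - \| \B H \B u \|^2 \big| ~\le~ \| \B H(\B u - \B v)\| \cdot (\| \B H \B u\| + \| \B H \B v\|) \ , \nonumber
\end{equation}
and then apply the RE-type bounds in \eqref{ass-re}: $\|\B H(\B u-\B v)\|^2 \le \rho_n \|\B u-\B v\|^2 \le 4\rho_n U^2$ on the 4-sparse piece, and $\|\B H \B u\|^2, \|\B H \B v\|^2 \le R\rho_n \cdot 2R U^2$ on the $2R$-sparse pieces. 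This yields a bound of the form $O(R \rho_n U^2)$, which by $R\rho_n \le L^2/(90U^2)$ is at most $\tfrac{2L^2}{45}$. For the inner product, $\|\B u - \B v\|_1 \le 4U$ (4 nonzero coordinates of magnitude $\le U$), so
\begin{equation}
|2 \la \B u - \B v, \tdH \B \ep \ra | ~\le~ 2 \|\B u - \B v\|_1 \|\tdH \B \ep\|_\infty ~\le~ 8 U \bar\sigma \ , \nonumber
\end{equation}
and Assumption \ref{ass-main0}(4) gives $\bar\sigma \le L^2/(160 U)$, so this is at most $L^2/20$. Summing gives $|\Delta_{\tdH} - \Delta| \le \tfrac{2L^2}{45} + \tfrac{L^2}{20} < L^2/5$.

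The main subtlety, I anticipate, is in the quadratic-difference step. A naive triangle-inequality bound of $\|\B H \B u\|^2 + \|\B H \B v\|^2$ only yields something like $2 R^2 \rho_n U^2 \le U^2/5$ via Assumption \ref{ass-main}(1), which is useless because $U > L$. The crucial trick is the factorization $a^2 - b^2 = (a-b)(a+b)$, which lets one combine the strong 4-sparse RE bound on $\B u - \B v$ (giving a factor of $\sqrt{\rho_n}$, not $\sqrt{R\rho_n}$) with the weaker $2R$-sparse bound on $\B u + \B v$. The final estimate then uses the product $R\rho_n \le L^2/(90U^2)$, which eliminates the extra factor of $R$ that would otherwise prevent the bound from depending only on $L$. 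Everything else is routine expansion and Cauchy--Schwarz.
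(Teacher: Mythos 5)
Your proposal is correct, and its skeleton coincides with the paper's proof: both reduce the claim to the same algebraic identity (your $\Delta_{\tdH}-\Delta = \|\B H \B v\|^2 - \|\B H \B u\|^2 + 2\la \B u - \B v, \tdH\B\ep\ra$ is, after substituting $\B z = \B u - \B v$ and expanding, exactly the paper's $\|\B H\B z\|^2 + 2\la \B H \B z, \B H \B v\ra + 2\la \B H\B z, \B H\B\ep\ra - 2\la\B z,\B\ep\ra$ up to sign), and both exploit the same key pairing of the $4$-sparse RE bound on $\tdP\B y - \B P\B y$ with the $2R$-sparse RE bound on the deviations from $\B P^*\B y$, so that the signal term scales as $R\rho_n U^2$ rather than the useless $R^2\rho_n U^2$. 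Where you genuinely differ is in the handling of the noise term: the paper splits $-2\la \B z,\tdH\B\ep\ra$ into $2\la \B H\B z,\B H\B\ep\ra - 2\la\B z,\B\ep\ra$ and must invoke both $\|\B\ep\|_\infty\le\bar\sigma$ and $\|\B H\B\ep\|\le\sqrt{d}\,\bar\sigma$, together with the $(\rho_n d)^{-1/2}$ branch of the bound on $\bar\sigma$; you instead apply H\"older directly with $\|\tdH\B\ep\|_\infty\le\bar\sigma$, which is simpler, yields the tighter estimate $8U\bar\sigma\le L^2/20$ (versus the paper's $L^2/10$), and uses strictly less of Assumption \ref{ass-main0}~(4). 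Your difference-of-squares factorization versus the paper's expansion of $\|\B H(\B v+\B z)\|^2$ is a cosmetic distinction with the same effect. One arithmetic slip: your quadratic-difference bound is $2\sqrt{\rho_n}U\cdot 2\sqrt{2}R\sqrt{\rho_n}U = 4\sqrt{2}\,R\rho_n U^2 \le 4\sqrt{2}L^2/90 \approx 0.063L^2$, not $2L^2/45\approx 0.044L^2$ (you dropped the $\sqrt{2}$ coming from $\|\B u\|\le\sqrt{2R}\,U$); this is harmless, since $0.063L^2 + L^2/20 \approx 0.113L^2$ is still comfortably below $L^2/5$.
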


\begin{proof}
	Let $\B z := \tdP \B y - \B P\B y $. Note that
	\begin{equation}\label{eq-p1}
	\begin{aligned}
	\Dt(\tdP, \B P) &= \| \B z+ \B P \B y - \B P^* \B y \|^2 - 	\| \B P \B y - \B P^* \B y \|^2 \\
	& = 	\| \B z\|^2 + 2 \la \B z, \B P \B y - \B P^* \B y \ra
	\end{aligned}
	\end{equation}
	and
	\begin{equation}\label{eq-p2}
	\begin{aligned}
	\Dt_{\tdH} (\tdP, \B P) &=
	\| \tdH \B z + \tdH \B P\B y \|^2 - \|\tdH \B P\B y \|^2 \\
	&=
	\| \tdH \B z \|^2 + 2 \la \tdH \B z , \tdH \B P \B y \ra \\
	&=
	\| \tdH \B z \|^2 + 2 \la \tdH \B z , \tdH(\B P\B y - \B P^*\B y ) \ra + 2 \la \tdH \B z , \tdH \B P^* \B y \ra \\
	&=
	\| \tdH \B z \|^2 + 2 \la \tdH \B z , \tdH(\B P\B y - \B P^*\B y ) \ra + 2 \la \tdH \B z , \tdH \B \ep \ra \ ,
	\end{aligned}
	\end{equation}
	where the last equality uses the fact $ \tdH \B P^* \B y = \tdH(\B X \B \beta^* + \B \ep) = \tdH \B \ep  $. 
	From \eqref{eq-p1} and \eqref{eq-p2} we have 
	\begin{eqnarray}
	\Dt(\tdP, \B P) - \Dt_{\tdH} (\tdP, \B P) &= &
	\| \B H \B z \|^2 + 2 \la \B  H \B z , \B  H( \B P \B y -\B  P^* \B y ) \ra - 2 \la \tdH \B z  , \tdH \B \ep \ra \nonumber\\
	&=&
	\| \B H \B z \|^2 + 2 \la  \B H \B z ,  \B H( \B P \B y - \B P^* \B y ) \ra + 2 \la  \B H \B z ,  \B H \B \ep \ra - 2 \la  \B z ,  \B \ep \ra \ , \nonumber
	\end{eqnarray}
	where the second equality is because $\tdH = \B I_n - \B H$. 
	As a result, 
	\begin{equation}\label{ineq-2}
	\begin{aligned}
	&| 	\Dt(\tdP, \B P) - \Dt_{\tdH} (\tdP, \B P)   | \\
	\le & 
	\| \B H \B z \|^2  + 2 \|\B H \B z\| \| \B H( \B P \B y - \B P^* \B y ) \| + 2 \|  \B H \B z\|   \|\B H \B \ep\| + 2 | \la \B  z ,  \B \ep \ra | .
	\end{aligned}
	\end{equation}
	Since $ \dist(\B P, \tdP) \le 4 $, we have $ \B z = \tdP \B y - \B P\B y \in \cB_4 $, hence by Assumption \ref{ass-main0} (3) and (1) we have 
	\begin{equation}\label{ineq-3}
	\| \B H \B z\|^2 \le \rho_n \| \B z \|^2 \le 4 \rho_n U^2  \ . 
	\end{equation}
	Since $ \dist (\B P, \B P^*) \le \dist (\B P, \B I_n) + \dist (\B P^*, \B I_n) \le R+r $, we have $ \B P\B y - \B P^* \B y \in \cB_{R+r} \subseteq \cB_{2R}$, hence by Assumption \ref{ass-main0} (3) and (1), \begin{equation}\label{ineq-4}
	\| \B H(\B P\B y - \B P^* \B y) \| \le \sqrt{R \rho_n} \| \B P\B y -\B  P^* \B y \|  \le \sqrt{ \rho_n} (R+r)U \le 2\sqrt{\rho_n} RU \ .
	\end{equation}
	Again because $ \B z \in \cB_4 $ we have 
	\begin{equation}\label{ineq-5}
	|\la \B z, \B \ep \ra | \le 4 U \| \B \ep\|_{\infty } \ .
	\end{equation}
	Combining \eqref{ineq-2} -- \eqref{ineq-5} we have 
	\begin{eqnarray}
	| 	\Dt(\tdP, \B P) - \Dt_{\tdH} (\tdP, \B P)   |  &\le& 
	4\rho_n U^2 + 4 \sqrt{\rho_n} U \cdot 2\sqrt{\rho_n} RU + 4 \sqrt{\rho_n} U \| \B H \B \ep \| + 8U \| \B \ep\|_{\infty} \nonumber\\
	&=&
	(4 + 8R)\rho_n U^2 + 4 \sqrt{\rho_n} U \| \B H \B \ep \| + 8U \| \B \ep\|_{\infty} \nonumber\\
	&<&
	9R\rho_n U^2 + 4 \sqrt{\rho_n} U \| \B H \B \ep \| + 8U \| \B \ep\|_{\infty} \ , \nonumber
	\end{eqnarray}
	where in the last inequality we use $R> 4$. 
	By Assumption \ref{ass-main0} (4) we know $\|\B \ep\|_{\infty} \le \bar\sigma $ and $\| \B H \B \ep \| \le \sqrt{d} \bar\sigma$, so we have 
	\begin{eqnarray}\label{part0}
	| 	\Dt(\tdP, \B P) - \Dt_{\tdH} (\tdP, \B P)   |  <  9 \rho_n R U^2 + 4 \sqrt{d \rho_n} U \bar\sigma + 8U \bar\sigma \ . 
	\end{eqnarray}
	From Assumption \ref{ass-main0} (4) we have $\bar \sigma \le \min\{0.5, (\rho_nd)^{-1/2}\} L^2/(80U) $. This implies 
	\begin{eqnarray}\label{part1}
	4 \sqrt{d \rho_n} U \bar\sigma + 8U \bar\sigma \le L^2/20 + L^2/20 \le L^2/10 \ . 
	\end{eqnarray}
	Note that by Assumption \ref{ass-main0} (3) we have $R\rho_n \le L^2/(90U^2)$, or equivalently, $9R\rho_nU^2 \le L^2/10$. Combining this with \eqref{part0} and \eqref{part1}, we have 
	$$
	\setlength{\belowdisplayskip}{3pt}
	| 	\Dt(\tdP, \B P) - \Dt_{\tdH} (\tdP, \B P)   |  < L^2 /10+ L^2/10 = L^2/5 \ . 
	$$
\end{proof}

\begin{lemma}\label{lemma: Non-expansiveness of infinity norm}
	(Decrease in infinity norm) Let $\B P,\tdP \in \Pi_n$ with $\dist(\B P,\tdP) = 2$. For any $\B v\in \R^n$, if $ \| \tdP \B v - \B v\|^2 <  \|  \B P \B v - \B v\|^2  $, then it holds $ \| \tdP \B v - \B v\|_{\infty} \le  \|  \B P \B v - \B v\|_{\infty}$.
\end{lemma}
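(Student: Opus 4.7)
Since $\dist(\B P, \tdP) = 2$, the permutations $\pi_{\B P}$ and $\pi_{\tdP}$ agree on $[n] \setminus \{i,j\}$ for some pair $i \neq j$, and because both are permutations they must satisfy $\pi_{\tdP}(i) = \pi_{\B P}(j)$ and $\pi_{\tdP}(j) = \pi_{\B P}(i)$. Set $a := \pi_{\B P}(i)$ and $b := \pi_{\B P}(j)$. Using $(\B P \B v)_k = v_{\pi_{\B P}(k)}$ and the analogous identity for $\tdP$, the vectors $\B P \B v - \B v$ and $\tdP \B v - \B v$ agree on coordinates outside $\{i, j\}$, while at positions $i,j$ they equal, respectively,
\begin{equation*}
p_i := v_a - v_i,\ p_j := v_b - v_j, \quad q_i := v_b - v_i,\ q_j := v_a - v_j.
\end{equation*}

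The plan is to show the stronger two-index statement $\max(|q_i|,|q_j|) \le \max(|p_i|,|p_j|)$; the lemma then follows immediately since $\|\B P \B v - \B v\|_\infty$ and $\|\tdP \B v - \B v\|_\infty$ differ only through these two coordinates. Introduce $A := v_a - v_b$ and $B := v_j - v_i$, so that $p_i - q_i = A = q_j - p_j$ and $p_i - q_j = B = q_i - p_j$. A direct computation gives
\begin{equation*}
\|\B P \B v - \B v\|^2 - \|\tdP \B v - \B v\|^2 = (p_i^2 + p_j^2) - (q_i^2 + q_j^2) = 2AB,
\end{equation*}
so the hypothesis of the lemma is equivalent to $AB > 0$.

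The key observation is then the following: when $AB > 0$, both $q_i$ and $q_j$ lie in the closed interval $[\min(p_i,p_j), \max(p_i,p_j)]$. Indeed, assume $A, B > 0$ (the case $A, B < 0$ is handled identically after negation). Then $p_i - p_j = A + B > 0$, and the identities above give $p_i - q_i = A > 0$ with $q_i - p_j = B > 0$, hence $p_j < q_i < p_i$; an analogous chain shows $p_j < q_j < p_i$.

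Finally, any number lying between $p_j$ and $p_i$ has absolute value at most $\max(|p_i|, |p_j|)$, so $\max(|q_i|, |q_j|) \le \max(|p_i|, |p_j|)$, which combined with the equality of all other coordinates yields $\|\tdP\B v - \B v\|_\infty \le \|\B P \B v - \B v\|_\infty$. I do not anticipate a genuine obstacle here: the only mildly delicate step is the sign bookkeeping in step~3 (verifying the identity $2AB$ and correctly identifying which index of $\tdP$ maps to which value under the swap), and the containment argument in the final paragraph is a one-line algebraic check.
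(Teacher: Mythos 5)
Your proposal is correct, and it takes a genuinely different route from the paper. The paper's proof fixes the index $i$ attaining $\|\tdP \B v - \B v\|_\infty$, dispatches the easy case where that coordinate is untouched by the swap, and then grinds through a case analysis on the relative ordering of the four values $v_i, v_{i_+}, v_\ell, v_{\ell_+}$ (twelve cases, each either yielding the bound directly or contradicting the strict-decrease hypothesis). You instead make the argument purely algebraic: writing the two changed residuals of $\B P$ as $p_i, p_j$ and those of $\tdP$ as $q_i, q_j$, introducing $A = v_a - v_b$ and $B = v_j - v_i$, and verifying the identity $(p_i^2+p_j^2)-(q_i^2+q_j^2) = 2AB$, so the hypothesis is exactly $AB>0$; the sign relations $p_i - q_i = A = q_j - p_j$ and $p_i - q_j = B = q_i - p_j$ then pin both $q_i$ and $q_j$ inside the interval $[\min(p_i,p_j),\max(p_i,p_j)]$ (the two sign cases $A,B>0$ and $A,B<0$ being symmetric), and interval containment immediately bounds $\max(|q_i|,|q_j|)$ by $\max(|p_i|,|p_j|)$. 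I checked the identities and the containment chains; they are all correct, as is the reduction of $\dist(\B P,\tdP)=2$ to a transposition of images. What your approach buys is substantial: it replaces the twelve-case enumeration by a two-line computation plus a sign symmetry, and it proves a strictly stronger structural fact (both swapped residuals lie between the original two residuals, not merely below their maximum in absolute value), which makes the mechanism behind the lemma transparent. The paper's case analysis is more pedestrian but requires no cleverness in choosing the right algebraic quantities; yours is the proof one would want to keep.
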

\begin{proof}
	Let $i\in [n]$ be the index such that $|\B e_i^\T (\tdP \B v - \B v)| = \| \tdP \B v - \B v \|_{\infty}$. If $ \B e_i^\T \B P = \B  e_i^\T \tdP$, then immediately we have 
	$$
	\|  \B P \B v - \B v\|_{\infty} \ge |\B e_i^\T ( \B P \B v - \B v)| = |\B e_i^\T (\tdP \B v - \B v)| = \| \tdP \B v - \B v \|_{\infty} \ . 
	$$
	If $ \B e_i^\T \B P \neq  \B  e_i^\T \tdP$, assume $\ell\in [n]$ is the index such that 
	$ \B e_i^\T \tdP = \B e_{\ell}^\T \B P $. Since $\dist(\B P,\tdP) = 2$, it holds 
	$ \B e^\T_{\ell} \tdP = \B e_i^\T \B P $. Denote $ i_+:= \pi_{\B P}(i) $ and $\ell_+:= \pi_{\B P}(\ell)$. Because $ \B e_{i_+}^\T = \B e_i^\T P = \B e_\ell^\T \tdP $ and $\B e_{\ell_+}^\T = \B e_\ell^\T P = \B e_i^\T \tdP$, it holds $ i_+ = \pi_{\tdP}(\ell)  $ and $\ell_+ = \pi_{\tdP}(i) $. As a result, we have 
	$$
	\| \tdP \B v - \B v \|^2 - \|  \B P \B v - \B v \|^2 = (v_{\ell} - v_{i_+})^2 + (v_{i} - v_{\ell_+})^2 - (v_i - v_{i_+})^2 -  (v_\ell - v_{\ell_+})^2 \ .
	$$
	By the assumption that $ \| \tdP \B v - \B v\|^2 <  \|  \B P \B v - \B v\|^2  $, we have 
	\begin{equation}\label{ineq-1}
	(v_{\ell} - v_{i_+})^2 + (v_{i} - v_{\ell_+})^2 - (v_i - v_{i_+})^2 -  (v_\ell - v_{\ell_+})^2 < 0 \ . 
	\end{equation}
	Let us denote $L:= \| \tdP \B v-  \B v \|_{\infty} = |\B e_i^\T (\tdP \B v - \B v)| = |v_i - v_{\ell_+}|$. 
	In what follows, we discuss different cases depending upon the ordering of the values of $v_i$, $ v_{i_+} $, $v_{\ell}$ and $v_{\ell_+}$. Without loss of generality, we can assume $ v_{i_+} \ge v_i $. Then there are 12 cases of the ordering in $v_i$, $ v_{i_+} $, $v_{\ell}$ and $v_{\ell_+}$. In the following, the first 6 cases correspond to when $ v_{\ell_+} \ge v_{\ell} $, and the last 6 cases correspond to when $ v_{\ell_+} \le v_{\ell} $. 
	
	\noindent
	\textbf{Case 1}: $ v_{i_+} \ge v_{\ell_+} \ge v_i \ge v_{\ell} $. Then we have
	$\|  \B P \B v - \B v\|_{\infty} \ge |v_i - v_{i_+}| \ge |v_i - v_{\ell_+}| = L $.

	\noindent
	\textbf{Case 2}: $ v_{i_+}\ge v_i \ge v_{\ell_+}  \ge v_{\ell} $. Let $a = v_{i_+} - v_{i}$, $ b = v_i - v_{\ell_+}$ and $c= v_{\ell_+}  - v_{\ell}  $.
	Then
	\begin{equation}
	\begin{aligned}
	&(v_i - v_{\ell_+})^2 + (v_{\ell} - v_{i_+})^2 - 	(v_i - v_{i_+})^2 - 	(v_{\ell} - v_{\ell_+})^2 \\
	=&~b^2 + (a+b+c)^2 - a^2 -c^2 \ge 2b^2 \ge 2L^2>0 \ .
	\end{aligned}
	\nonumber
	\end{equation}
	This is a contradiction to \eqref{ineq-1}. So this case cannot appear. 
	
	\noindent
	\textbf{Case 3}: $ v_{i_+} \ge v_{\ell_+}  \ge v_{\ell} \ge v_i$. Then we have 
	$\|  \B P \B v - \B v\|_{\infty} \ge |v_i - v_{i_+}| \ge |v_i - v_{\ell_+}| = L $.
	
	\noindent
	\textbf{Case 4}: $ v_{\ell_+} \ge v_{i_+} \ge v_{\ell} \ge v_{i} $. Let $a =  v_{\ell_+} - v_{i_+} $, $b =  v_{i_+} - v_{\ell}$ and $ c = v_{\ell} - v_{i}$. Then 
	\begin{eqnarray}
	&& (v_i - v_{\ell_+})^2 + (v_{\ell} - v_{i_+})^2 - 	(v_i - v_{i_+})^2 - 	(v_{\ell} - v_{\ell_+})^2 \nonumber\\
	&=&
	(a+b+c)^2 + b^2 - (b+c)^2 - (a+b)^2 = 2ac \ge 0 \ . \nonumber
	\end{eqnarray}
	This is a contradiction to \eqref{ineq-1}. So this case cannot appear. 
	
	\noindent
	\textbf{Case 5}: $ v_{\ell_+}\ge v_{\ell}  \ge v_{i_+} \ge v_{i} $. Let $ a= v_{\ell_+}- v_{\ell}$, $b = v_{\ell}  - v_{i_+}$ and $c = v_{i_+} - v_{i} $. Then 
	\begin{eqnarray}
	&& (v_i - v_{\ell_+})^2 + (v_{\ell} - v_{i_+})^2 - 	(v_i - v_{i_+})^2 - 	(v_{\ell} - v_{\ell_+})^2 \nonumber\\
	&=&
	(a+b+c)^2 + b^2 - a^2 - c^2  \ge 0 \ . \nonumber
	\end{eqnarray}
	This is a contradiction to \eqref{ineq-1}. So this case cannot appear. 
	
	\noindent
	\textbf{Case 6}: $ v_{\ell_+}\ge v_{i_+} \ge v_{i}\ge v_{\ell}   $. Then 
	$\|  \B P \B v - \B v\|_{\infty} \ge |v_\ell - v_{\ell_+}| \ge |v_i - v_{\ell_+}| = L $.
	
	\noindent
	\textbf{Case 7}: $ v_{i_+}\ge v_{\ell} \ge v_{i}\ge v_{\ell_+}   $. Then
	$\|  \B P \B v - \B v\|_{\infty} \ge |v_\ell - v_{\ell_+}| \ge |v_i - v_{\ell_+}| = L $.
	
	\noindent
	\textbf{Case 8}: $ v_{i_+} \ge v_{i}   \ge v_{\ell} \ge v_{\ell_+}   $. Let $a = v_{i_+} - v_{i}$, $ b = v_{i}   - v_{\ell}  $, $ c = v_{\ell} - v_{\ell_+} $. Then 
	\begin{eqnarray}
	&& (v_i - v_{\ell_+})^2 + (v_{\ell} - v_{i_+})^2 - 	(v_i - v_{i_+})^2 - 	(v_{\ell} - v_{\ell_+})^2 \nonumber\\
	&=&
	(b+c)^2 + (a+b)^2 - a^2 - c^2 \ge 0 \ . \nonumber
	\end{eqnarray}
	This is a contradiction to \eqref{ineq-1}. So this case cannot appear.

	\noindent
	\textbf{Case 9}: $ v_{i_+}   \ge v_{\ell} \ge v_{\ell_+} \ge v_{i} $. Then 
	$\|  \B P \B v - \B v\|_{\infty}\ge |v_i - v_{i_+}| \ge |v_i - v_{\ell_+}| = L $.

	\noindent
	\textbf{Case 10}: 
	$ v_{\ell}   \ge v_{i_+} \ge v_{\ell_+} \ge v_{i} $. Then 
	$ \|  \B P \B v - \B v\|_{\infty} \ge |v_i - v_{i_+}| \ge |v_i - v_{\ell_+}| = L  $.

	\noindent
	\textbf{Case 11}: 
	$ v_{\ell} \ge v_{\ell_+}   \ge v_{i_+} \ge v_{i} $. Let $ a= v_{\ell} - v_{\ell_+}  $, $b = v_{\ell_+}   - v_{i_+}$ and $ v_{i_+} - v_{i} $. Then
	\begin{eqnarray}
	&& (v_i - v_{\ell_+})^2 + (v_{\ell} - v_{i_+})^2 - 	(v_i - v_{i_+})^2 - 	(v_{\ell} - v_{\ell_+})^2 \nonumber\\
	&=&
	(b+c)^2 + (a+b)^2 - a^2 - c^2 \ge 0 \ . \nonumber
	\end{eqnarray}
	This is a contradiction to \eqref{ineq-1}. So this case cannot appear.

	\noindent
	\textbf{Case 12}: 
	$ v_{\ell}   \ge v_{i_+} \ge v_{i} \ge v_{\ell_+}  $. 
	Then $\|  \B P \B v - \B v\|_{\infty} \ge |v_\ell - v_{\ell_+}| \ge |v_i - v_{\ell_+}| = L $.

	\noindent
	In view of all these cases, we have $	\|  \B P \B v -\B  v\|_{\infty} \ge L = 	\|  \tdP \B v - \B v\|_{\infty}  $.
\end{proof}

\subsection{Proof of Lemma~\ref{lemma: one-step}}\label{subsection: proof of one-step}

Without loss of generality we assume $\B P^* = \B I_n$, (otherwise, we work with $\B P (\B P^*)^{-1} $ in place of $\B P $ and $ \tdP (\B P^*)^{-1}$ in place of $\tdP$).
For any $k\in [n]$, let $k_+:= \pi_{\B P}(k)$. Let $ i $ be the index such that $( y_{i_+} - y_i)^2 = \| \B P \B y- \B y\|^2_{\infty} $.
With out loss of generality, we can assume $ y_{i_+} > y_i $. Denote $i_0 = i$ and $i_1 = i_+$. 
By the structure of a permutation, there exists a cycle that
\begin{equation}\label{cycle}
\setlength{\abovedisplayskip}{3pt}
\setlength{\belowdisplayskip}{3pt}
i_0 \mathop{\longrightarrow}\limits^{P} i_1 \mathop{\longrightarrow}\limits^{P}  \cdots
\mathop{\longrightarrow}\limits^{P} 
i_t \mathop{\longrightarrow}\limits^{P}  \cdots
\mathop{\longrightarrow}\limits^{P} i_S = i_0 
\end{equation}
where $q_1 \mathop{\longrightarrow}\limits^{P} q_2$ means $q_2 = \pi_{\B P}(q_1)$. 
By moving from $y_i $ to $y_{i_+}$, 
we ``upcross" the value $ \frac{y_i+y_{i_+}}{2}  $. Since the cycle \eqref{cycle} finally returns to $i_0$, there exists one step where we ``downcross" the value $ \frac{y_i+y_{i_+}}{2}  $.
In other words,
there exists $j\in[n]$ with $(j, j_+) \neq (i, i_+)$ such that $ y_{j_+} < y_j $ and $ \frac{y_i+y_{i_+}}{2} \in [y_{j_+}, y_j] $. 
Define $\tdP$ as follows:
$$
\pi_{\tdP} (i) = j_+,   ~~~	\pi_{\tdP} (j) = i_+, ~~~ \pi_{\tdP}(k) = \pi_{\B P}(k) ~~ \forall k\neq i,j \ .
$$
We note that $ \dist (\B P, \tdP) = 2 $ and 
$\supp(\tdP ) \subseteq \supp( \B P ) $.
Since 
$$
y_{i_+} - y_i = \| \B P\B y-\B y\|_{\infty} \ge y_{j} - y_{j_+}
$$
there are 3 cases depending upon the relative ordering $ y_i, y_{i_+}, y_j, y_{j_+}$, as considered below. 

\vspace{0.3cm}
\noindent
\textbf{Case 1}: $ y_j \ge y_{i_+} \ge y_{j_+} \ge y_i $. 
In this case, let $  a = y_j - y_{i_+} $, $b = y_{i_+} - y_{j_+}$ and $c = y_{j_+} - y_i   $. Then $a,b,c \ge 0$, and
\begin{eqnarray}
&&\| \B P \B y - \B y\|^2  - \| \tdP \B y - \B y \|^2 \nonumber\\
&=&
(y_i - y_{i_+})^2 + (y_j - y_{j_+})^2 - (y_i - y_{j_+})^2 - (y_j - y_{i_+})^2 \nonumber\\
&=&
(b+c)^2 + (a+b)^2 - c^2 - a^2 \nonumber\\
&=&
2b^2 + 2ab + 2 bc \ . \nonumber
\end{eqnarray}
Since $ \frac{y_i+y_{i_+}}{2} \in [y_{j_+}, y_j] $, we have 
$b = y_{i_+} - y_{j_+} \ge y_{i_+} - \frac{y_i+y_{i_+}}{2} = \frac{y_{i_+}- y_i}{2} $,
and hence 
$$
\| \B P \B y - \B y\|^2  - \| \tdP \B y - \B y \|^2 \ge 2b^2 \ge {(y_{i_+}- y_i)^2}/{2} = (1/2) \| \B P \B y - \B y\|^2_{\infty}  \ . 
$$

\vspace{0.1cm}
\noindent
\textbf{Case 2}: $ y_{i_+} \ge y_{j} \ge y_{i} \ge y_{j_+} $. 
In this case, let $  a = y_{i_+} - y_{j}  $, $b = y_{j} - y_{i}$ and $c = y_{i} - y_{j_+}   $. Then $a,b,c \ge 0$, and
\begin{eqnarray}
&&\| \B P \B y - \B y\|^2  - \| \tdP  \B y - \B y \|^2 \nonumber\\
&=&
(y_i - y_{i_+})^2 + (y_j - y_{j_+})^2 - (y_i - y_{j_+})^2 - (y_j - y_{i_+})^2 \nonumber\\
&=&
(a+b)^2 + (b+c)^2 - c^2 - a^2 \nonumber\\
&=&
2b^2 + 2ab + 2 bc \ . \nonumber
\end{eqnarray}
Since $ \frac{y_i+y_{i_+}}{2} \in [y_{j_+}, y_j] $, we have $ b = y_{j} - y_{i} \ge \frac{y_i+y_{i_+}}{2} - y_i = \frac{y_{i_+}- y_i}{2}$, 
and hence 
$$
\| \B P  \B y - \B y\|^2  - \| \tdP \B y - \B y \|^2 \ge 2b^2 \ge {(y_{i_+}- y_i)^2}/{2} = (1/2) \| \B P \B y - \B y\|^2_{\infty} \ . 
$$

\vspace{0.1cm}
\noindent
\textbf{Case 3}: $ y_{i_+} \ge y_{j} \ge y_{j_+} \ge y_{i} $. 
In this case, let $  a = y_{i_+} - y_{j}  $, $b = y_{j} - y_{j_+}$ and $c = y_{j_+} - y_{i}   $. Then $a,b,c \ge 0$, and
\begin{eqnarray}
&&\| \B P \B y - \B y\|^2  - \| \tdP \B y - \B y \|^2 \nonumber\\
&=&
(y_i - y_{i_+})^2 + (y_j - y_{j_+})^2 - (y_i - y_{j_+})^2 - (y_j - y_{i_+})^2 \nonumber\\
&=&
(a+b+c)^2 + b^2 - c^2 - a^2 \nonumber\\
&=&
2b^2 + 2ab + 2 bc + 2ac \ . \nonumber
\end{eqnarray}
Note that $ \| \B P \B y- \B y \|_{\infty}^2 = (y_i-y_{i_+})^2 = (a+b+c)^2 $. Because $ \frac{y_i+y_{i_+}}{2} \in [y_{j_+}, y_j] $, we have 
\begin{eqnarray}
a = y_{i_+} - y_j \le y_j - y_i = b+c, \quad \text{and}~~
c = y_{j_+} - y_i \le y_{i_+} - y_{j_+} = a+b\ , \nonumber
\end{eqnarray}
which implies 
$ a \le (a+b+c)/2 $ and $ c\le (a+b+c)/2  $. So we have
\begin{eqnarray}
\| \B P \B y - \B y\|^2  - \| \tdP \B y - \B y \|^2 \ge w \| \B P \B y- \B y \|_{\infty}^2 \nonumber
\end{eqnarray}
where 
$$
\setlength{\abovedisplayskip}{3pt}
w :=  \min \Big\{ \frac{2b^2 + 2ab + 2 bc + 2ac}{(a+b+c)^2}~: ~ a,b,c\ge 0; ~ a,c\le(a+b+c)/2 \Big\} .
$$
This is equivalent to 
\begin{eqnarray}
w &=& \min \Big\{ 
2b^2 + 2ab + 2 bc + 2ac~ : ~ a,b,c\ge 0; ~ a,c\le 1/2; ~ a+b+c = 1
\Big\} \nonumber\\
&=&
\min \Big\{ 
2b +  2ac~ : ~ a,b,c\ge 0; ~ a,c\le 1/2; ~ a+b+c = 1
\Big\} \nonumber\\
&=&
\min \Big\{ 
2(1-a-c) +  2ac~ : ~ a,c\ge 0; ~ a,c\le 1/2
\Big\} \nonumber\\
&=&
\min \Big\{ 
2(1-a)(1-c)~ : ~ a,c\ge 0; ~ a,c\le 1/2
\Big\} \nonumber\\
&=& 1/2 \ . \nonumber
\end{eqnarray}
Combining Cases 1, 2 and 3, completes the proof of  \eqref{one-step ineq1}.

For the proof of \eqref{one-step ineq2}, 
note that if $ \| \B P\B y -\B  P^* \B y \|_{0} \le m$, then $  \| \B P \B y - \B P^* \B y \|^2  \le m \| \B P \B y - \B P^* \B y \|^2_{\infty}$. Using \eqref{one-step ineq1} we have: 
$$
\| \B P \B y - \B P^* \B y\|^2  - \| \tdP \B y - \B P^* \B y \|^2 \ge (1/2)  \| \B P \B y -\B P^* \B y\|^2_{\infty} \ge (1/(2m))  \| \B P \B y -\B P^* \B y\|^2 ,
$$
which completes the proof of \eqref{one-step ineq2}.

\subsection{Proof of Lemma \ref{lemma: large decrease in past iterations}}\label{section: proof lemma large decrease in past iterations}

	To prove Lemma~\ref{lemma: large decrease in past iterations}, we first prove the following proposition:
	
	\begin{proposition}\label{prop-new}
		Under the assumptions of Lemma~\ref{lemma: large decrease in past iterations}, it holds 
		$\| \B P^{(t)} \B y - \B P^* \B y \|_{\infty} \ge L $ for all $0 \le t \le k$. 
	\end{proposition}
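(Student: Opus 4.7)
The plan is to prove the proposition by strong induction on $k$ with a proof-by-contradiction argument. For the base case, since $\B P^{(0)}=\B I_n$ and the premise of Lemma~\ref{lemma: large decrease in past iterations} forces $\supp(\B P^*)\ne\emptyset$ (otherwise the algorithm would never attain a state with infinity norm $\ge L$), Assumption~\ref{ass-main0}(1) applied to any $i\in\supp(\B P^*)$ gives $|y_i-(\B P^*\B y)_i|\ge L$, hence $\|\B P^{(0)}\B y-\B P^*\B y\|_\infty\ge L$.

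For the inductive step, assume the proposition for every $k'<k$ and suppose, towards a contradiction, that $\|\B P^{(t)}\B y-\B P^*\B y\|_\infty<L$ for some $t<k$; let $t^*$ be the largest such index. If $t^*+1<k$, I would apply the inductive hypothesis at $k':=t^*+1$: the premises of Lemma~\ref{lemma: large decrease in past iterations} at $k$ restrict naturally to valid premises at $k'$ (condition~(i) survives when $k'\le R/2$; condition~(ii) restricts to the prefix of support-inclusion indices), which yields $\|\B P^{(t)}\B y-\B P^*\B y\|_\infty\ge L$ for all $0\le t\le t^*+1$, contradicting the choice of $t^*$.

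The delicate case is $t^*+1=k$, where the infinity norm jumps in a single swap from strictly below $L$ at iteration $k-1$ to at least $L$ at iteration $k$. Here I plan to combine four ingredients: (a) Lemma~\ref{lemma: Non-expansiveness of infinity norm} (contrapositive) yields $\|\B P^{(k)}\B y-\B P^*\B y\|^2\ge\|\B P^{(k-1)}\B y-\B P^*\B y\|^2$; (b) the algorithm's minimality at step $k-1$ (with $\B P^{(k-1)}$ itself a feasible candidate) combined with Lemma~\ref{lemma: step-bound} bounds the mismatch increase by $L^2/5$; (c) Lemma~\ref{lemma: one-step} applied at $\B P^{(k)}$, using $\|\B P^{(k)}\B y-\B P^*\B y\|_\infty\ge L$, produces $\tdP\in\cN_2(\B P^{(k)})$ with $\supp(\tdP(\B P^*)^{-1})\subseteq\supp(\B P^{(k)}(\B P^*)^{-1})$ and $\|\tdP\B y-\B P^*\B y\|^2\le\|\B P^{(k)}\B y-\B P^*\B y\|^2-L^2/2$; and (d) Lemma~\ref{lemma: supp-inclusion} together with the support inclusion from condition~(ii) (or direct distance analysis under~(i)) ensures $\tdP\in\cN_R(\B I_n)$. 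Chaining (b) and (c) gives $\|\tdP\B y-\B P^*\B y\|^2<\|\B P^{(k-1)}\B y-\B P^*\B y\|^2-3L^2/10$, and applying Lemma~\ref{lemma: step-bound} to $(\tdP,\B P^{(k-1)})$ (a pair at distance $\le 4$) translates this into $\|\tdH\tdP\B y\|^2<\|\tdH\B P^{(k-1)}\B y\|^2-L^2/10$.

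The main obstacle is that $\tdP$ is generically at distance $3$ or $4$ from $\B P^{(k-1)}$, so it is not itself a candidate in $\cN_2(\B P^{(k-1)})$ for the update rule at step $k-1$. To close the contradiction I intend to exploit the structural identity $\B P^{(k)}=\sigma_1\B P^{(k-1)}$ for a single transposition $\sigma_1$, together with the observation that the $\ell_\infty$-argmax coordinate at iteration $k$ must lie in $\supp(\sigma_1)$ (since all other coordinates of $\B P^{(k)}\B y-\B P^*\B y$ coincide with those of $\B P^{(k-1)}\B y-\B P^*\B y$ and are therefore $<L$ in absolute value). A careful examination of the cycle structure of $\B P^{(k)}(\B P^*)^{-1}$ versus $\B P^{(k-1)}(\B P^*)^{-1}$ — coupled with the flexibility in choosing the downcross index in the proof of Lemma~\ref{lemma: one-step} — should allow one to instantiate $\tdP$ at distance $2$ from $\B P^{(k-1)}$, producing a candidate in $\cN_2(\B P^{(k-1)})\cap\cN_R(\B I_n)$ whose $\tdH$-objective is strictly below $\|\tdH\B P^{(k)}\B y\|^2$; this contradicts the minimality of $\B P^{(k)}$ at step $k-1$ and completes the proof.
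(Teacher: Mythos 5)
Your induction scaffolding (base case, and the case $t^*+1<k$ via the inductive hypothesis) is fine, and you have assembled the right four lemmas; but the delicate case $t^*=k-1$, which is the entire substance of the proposition, has a genuine gap at its final step. Your plan is to take the swap $\tdP$ produced by Lemma~\ref{lemma: one-step} \emph{at} $\B P^{(k)}$ and, via ``cycle structure'' and ``flexibility in the downcross index,'' realize it at distance $2$ from $\B P^{(k-1)}$ so as to contradict the minimality of $\B P^{(k)}$. This cannot work, for a group-theoretic reason: when $\dist(\B P^{(k)},\B P^{(k-1)})=2$, one has $\cN_2(\B P^{(k)})\cap\cN_2(\B P^{(k-1)})=\{\B P^{(k-1)},\B P^{(k)}\}$, since the product of two distinct transpositions is a $3$-cycle or a double transposition, never a transposition. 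So no choice of downcross index can place a distance-$2$-from-$\B P^{(k)}$ candidate inside $\cN_2(\B P^{(k-1)})$; and $\tdP=\B P^{(k-1)}$ is also impossible here, since by your step (c) it would force a mismatch increase of at least $L^2/2$ from $k-1$ to $k$, contradicting your step (b). There is a secondary hole in your step (d) as well: under condition (ii) of Lemma~\ref{lemma: large decrease in past iterations} the support inclusion $\supp(\B P^*)\subseteq\supp(\B P^{(k')})$ is hypothesized only for $k'\le k-1$, not at $k$; and under condition (i) the distance bound only gives $\dist(\tdP,\B I_n)\le R+2$. Both defects trace to the same decision: applying Lemma~\ref{lemma: one-step} at $\B P^{(k)}$ instead of at $\B P^{(k-1)}$.

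The repair --- which is essentially the paper's proof --- works entirely at $\B P^{(k-1)}$ and aims the contradiction at the witness coordinate, not at minimality. First derive the quantitative input you never extract: writing $\B y^*=\B P^*\B y$ and letting the step-$k$ swap exchange rows $i,\ell$ (the witness must lie in $\{i,\ell\}$, as you observe), the change in mismatch objective is $(y_i^*-y_{\ell_+}^*)^2+(y_\ell^*-y_{i_+}^*)^2-(y_i^*-y_{i_+}^*)^2-(y_\ell^*-y_{\ell_+}^*)^2$; since the created witness term is $\ge L^2$ while your (b) bounds the whole change by $L^2/5$, the two destroyed coordinates satisfy $(y_i^*-y_{i_+}^*)^2+(y_\ell^*-y_{\ell_+}^*)^2>(4/5)L^2$, hence $\|\B P^{(k-1)}\B y-\B P^*\B y\|_\infty^2>(2/5)L^2$. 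Now apply Lemma~\ref{lemma: one-step} at $\B P^{(k-1)}$: the resulting $\tdP'$ lies in $\cN_2(\B P^{(k-1)})$ and is genuinely feasible (under condition (i), $\dist(\B P^{(k-1)},\B I_n)\le 2(k-1)\le R-2$; under condition (ii), Lemma~\ref{lemma: supp-inclusion} applies at $k-1$), with mismatch decrease exceeding $L^2/5$. Minimality of $\B P^{(k)}$ over $\cN_2(\B P^{(k-1)})\cap\cN_R(\B I_n)$ plus Lemma~\ref{lemma: step-bound} applied to the pair $(\B P^{(k)},\tdP')$ (distance $\le 4$) then gives $\|\B P^{(k)}\B y-\B P^*\B y\|^2\le\|\tdP'\B y-\B P^*\B y\|^2+L^2/5<\|\B P^{(k-1)}\B y-\B P^*\B y\|^2$, and Lemma~\ref{lemma: Non-expansiveness of infinity norm} converts this strict decrease into $\|\B P^{(k)}\B y-\B P^*\B y\|_\infty\le\|\B P^{(k-1)}\B y-\B P^*\B y\|_\infty<L$, contradicting the hypothesis $\|\B P^{(k)}\B y-\B P^*\B y\|_\infty\ge L$. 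The paper runs exactly this argument at the last iteration $j\le k-1$ at which the witness row changed and then recurses backwards; your strong induction plays the role of that recursion, so once the $t^*=k-1$ case is replaced by the argument above, your structure goes through.
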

\begin{proof}
	As $\| \B P^{(k)} \B y - \B P^* \B y \|_{\infty} \ge L $,  	let $i\in [n]$ be the index such that $ | \B e_i^\T (\B P^{(k)} \B y - \B P^* \B y) | \ge L $. 
		We can assume that there exists $j \le k-1$ such that 
		\begin{eqnarray}\label{ass-exist-j}
		\B e_i^\T \B P^{(j)} \neq \B e_i^\T \B P^{(k)} ~~~ {\rm but } ~~~ \B e_i^\T \B P^{(t)} = \B e_i^\T \B P^{(k)} ~~ \forall ~ j+1 \le t \le k    \ .  
		\end{eqnarray}
		since otherwise $\B e_i^\T \B P^{(t)} = \B e_i^\T \B P^{(k)} $ for all $0\le t \le k$ and hence $\| \B P^{(t)} \B y - \B P^* \B y \|_{\infty} \ge | \B e_i^\T (\B P^{(t)} \B y - \B P^* \B y) | \ge L$ for all $0\le t \le k$, i.e., the conclusion of Proposition~\ref{prop-new} holds true. 
		
		Below we prove Proposition~\ref{prop-new} under the assumption \eqref{ass-exist-j}. 
	By \eqref{ass-exist-j} we know that $ \B e_i^\T \B P^{(j)} \neq \B e_i^\T \B P^{(j+1)} $. For any $t\ge 0$ let $\B Q^{(t)} := \B P^{(t)} (\B P^*)^{-1}$. Then we have
	$$
	\B e_i^\T \B Q^{(j+1)} = \B e_i^\T \B P^{(j+1)} (\B P^*)^{-1} \neq \B e_i^\T \B P^{(j)} (\B P^*)^{-1} = \B e_i^\T \B Q^{(j)}  \ .
	$$
	Since $ \dist (\B Q^{(j)}, \B Q^{(j+1)}) = \dist (\B P^{(j)}, \B P^{(j+1)}) =2$, there must exist an index $\ell \in [n]$ such that 
	$$
	\pi_{\B Q^{(j+1)}} (i) = \pi_{\B Q^{(j)}} (\ell )  ,~~ 	\pi_{\B Q^{(j+1)}} (\ell) = \pi_{\B Q^{(j)}} (i)  ,~~
	\pi_{\B Q^{(j+1)}} (w) = \pi_{\B Q^{(j)}}(w) ~~ \forall w\neq i,\ell \ . 
	$$
	In the following, denote $i_+:= \pi_{\B Q^{(j)}} (i )$ and $\ell_+:= \pi_{\B Q^{(j)}} (\ell )$ and let $\B y^*:= \B P^* \B  y$. Then we have
	\begin{eqnarray}\label{eq--1}
	&&\| \B P^{(j+1)} \B y - \B P^* \B y \|^2  -  \| \B P^{(j)} \B y - \B P^* \B y \|^2 \nonumber\\
	&=&
	\| (\B Q^{(j+1)} - \B I_n) \B y^* \|^2 - \| (\B Q^{(j)} - \B  I_n) \B y^* \|^2 \nonumber\\
	&=&
	(y_i^* - y_{\ell_+}^*)^2 + (y_{\ell}^* - y_{i_+}^*)^2 - 	(y_i^* - y_{i_+}^*)^2 - 	(y_{\ell}^* - y_{\ell_+}^*)^2  \ .
	\end{eqnarray}
	Since $\ell_+ =  \pi_{\B Q^{(j)}} (\ell ) =  \pi_{\B Q^{(j+1)}} (i)$, we have
	\begin{equation}\label{eq-1}
	(y_i^* - y_{\ell_+}^*)^2 = ( \B e_i^\T ( \B P^* \B y - \B Q^{(j+1)} \B P^* \B y) )^2 =  ( \B e_i^\T ( \B P^* \B y - \B P^{(j+1)} \B y) )^2  \ .
	\end{equation}
	By the definition of $j$ and equality \eqref{eq-1}, we have
	\begin{equation}\label{ineq--3}
	(y_i^* - y_{\ell_+}^*)^2 =  ( \B e_i^\T ( \B P^* \B y - \B P^{(k)} \B y) )^2 \ge L^2  \ .
	\end{equation}
	
	Since $ \| \tdH \B P^{(j+1)} \B y \| \le \| \tdH \B P^{(j)} \B y \|  $, using Lemma \ref{lemma: step-bound} with $\tdP = \B P^{(j+1)}$ and $\B P = \B P^{(j)}$, we have 
	$$
	\| \B P^{(j+1)} \B y - \B P^* \B y \|^2 - \| \B P^{(j)} \B y - \B P^* \B y \|^2 < 
	\| \tdH \B P^{(j+1)} \B y \|^2 - \| \tdH \B P^{(j)} \B y \|^2  + L^2/5 
	\le  L^2/5.
	$$
	This leads to 	
	$$
	(y_i^* - y_{\ell_+}^*)^2 + (y_{\ell}^* - y_{i_+}^*)^2 - 	(y_i^* - y_{i_+}^*)^2 - 	(y_{\ell}^* - y_{\ell_+}^*)^2  < L^2/5
	$$
	which when combined with \eqref{ineq--3} leads to: 
	$$
	(y_i^* - y_{i_+}^*)^2 + 	(y_{\ell}^* - y_{\ell_+}^*)^2 > (y_i^* - y_{\ell_+}^*)^2 + (y_{\ell}^* - y_{i_+}^*)^2 - (1/5)L^2 \ge (4/5)L^2.
	$$
	As a result, we have 
	\begin{equation}\label{ineq-8}
	\| \B P^{(j)} \B y - \B P^* \B y \|_{\infty}^2  \ge \max \{(y_i^* - y_{i_+}^*)^2, (y_{\ell}^* - y_{\ell_+}^*)^2\} > (2/5)L^2
	\end{equation}
	By Lemma \ref{lemma: one-step}, there exists $\tdP^{(j)} \in \Pi_n$ such that $\dist(\tdP^{(j)}, \B P^{(j)}) \le 2$, $ \supp(\tdP^{(j)} (\B P^*)^{-1} )  \subseteq \supp(\B P^{(j)} (\B P^*)^{-1} )$ and 
	\begin{equation}\label{ineq-6}
	\| \tdP^{(j)} \B y - \B P^*  \B y \|^2 - \|  \B P^{(j)} \B y - \B P^* \B y \|^2 \le - (1/2) \| \B P^{(j)} \B y - \B P^* \B y \|_{\infty}^2 \ . 
	\end{equation}
	We now make use of the following claim:
	\begin{equation}\label{claim3}
	{\bf Claim.} ~~~~~~~~~~~~~~~ \tdP^{(j)} \in \cN_R( \B I_n) .
	\end{equation}
	\begin{itemize}
		\item[] 
		\emph{Proof of Claim~\eqref{claim3}:}	Note that if $j\le R/2 - 1$, then $ \dist(\B I_n, \tdP^{(j)} ) \le \dist(  \B I_n, \B P^{(j)} ) + \dist( \B P^{(j)}, \tdP^{(j)} ) \le R$. Otherwise, from the statement of Lemma \ref{lemma: large decrease in past iterations}, we know that $\supp(\B P^*) \subseteq \supp(\B P^{(j)}) $. Using Lemma~\ref{lemma: supp-inclusion} with $\B P = \B P^{(j)}$, $\B Q = \B P^*$ and $\tdP = \tdP^{(j)}$ we have $\supp(\tdP^{(j)}) \subseteq \supp(\B P^{(j)})$. Since $\B P^{(j)} \in \cN_R(\B I_n)$, we also have $\supp(\tdP^{(j)}) \in \cN_R(\B I_n)$. The proof of Claim \eqref{claim3} is complete. 
	\end{itemize}
	Because $\tdP^{(j)} \in \cN_R(\B I_n)$, by the update step in the local search algorithm, we know $ \| \tdH \B P^{(j+1)} \B y \| \le  \| \tdH \tdP^{(j)} \B y \|  $. 
	Using Lemma \ref{lemma: step-bound} again with $\tdP = \B P^{(j+1)}$ and $\B P = \tdP^{(j)}$, we have 
	\begin{equation}\label{ineq-7}
	\begin{aligned}
	&\| \B P^{(j+1)} \B y -\B P^* \B y \|^2 - 	\| \tdP^{(j)} \B y - \B P^* \B y \|^2 \\
	\le & \| \tdH \B P^{(j+1)} \B y \|^2 -  \| \tdH \tdP^{(j)} \B y \|^2 + L^2/5
	\le  L^2/5
	\end{aligned}
	\end{equation}
	Combining \eqref{ineq-6}, \eqref{ineq-7} and \eqref{ineq-8} we have 
	$$
	\| \B P^{(j+1)} \B y - \B P^* \B y \|^2 - 	\|  \B P^{(j)} \B y - \B P^* \B y \|^2 \le
	- (1/2)\| \B P^{(j)} \B y - \B P^* \B y \|_{\infty}^2  
	+ L^2/5 <0 \ 
	$$
	which is equivalent to 
	$$
	\| \B Q^{(j+1)} \B y^* - \B y^* \|^2 - \| \B Q^{(j)} \B y^* - \B y^* \|^2 <0 \ .
	$$
	We now use Lemma \ref{lemma: Non-expansiveness of infinity norm} with $\tdP = \B Q^{(j+1)}$, $\B P = \B Q^{(j)}$ and $\B v = \B y^*$, to obtain
	$$
	\| \B P^{(j)} \B y - \B P^* \B y \|_{\infty} = 
	\| \B Q^{(j)} \B y^* - \B y^* \|_{\infty} \ge \| \B Q^{(j+1)} \B y^* - \B y^* \|_{\infty} = \| \B P^{(j+1)} \B y - \B P^* \B y \|_{\infty} \ge L \ .
	$$
	
	By the arguments above we have proved that $\| \B P^{(j)} \B y - \B P^* \B y \|_{\infty} \ge L $. Recall that we also have $ \| \B P^{(t)} \B y - \B P^* \B y \|_{\infty} \ge L$ for all $ j+1 \le t\le k $, so we know that $ 	\| \B P^{(t)} \B y - \B P^* \B y \|_{\infty} \ge L $ for all $j\le t \le k$. We can just replace $k$ by $j$ and repeat the arguments above to obtain $ 	\| \B P^{(t)} \B y - \B P^* \B y \|_{\infty} \ge L $ for all $ 0 \le t\le k$. This completes the proof of Proposition~\ref{prop-new}. 
\end{proof}

With Proposition~\ref{prop-new} at hand, we are ready to wrap up the proof of Lemma~\ref{lemma: large decrease in past iterations}. 
For each $t\le k-1$, By Lemma \ref{lemma: one-step}, 
there exists $\tdP^{(t)} \in \Pi_n$ such that 
$	\dist(\tdP^{(t)}, \B P^{(t)}) \le 2  $, $\supp(\tdP^{(t)} (\B P^*)^{-1} )  \subseteq \supp(\B P^{(t)} (\B P^*)^{-1} )$ and 
\begin{eqnarray}\label{ineq-p1}
\| \tdP^{(t)} \B y - \B P^* \B y \|^2 -  \|  \B P^{(t)} \B y - \B P^* \B y \|^2 \le - (1/2) \| \B P^{(t)} \B y - \B P^* \B y \|_{\infty}^2 \le 
- L^2/2 \ 
\end{eqnarray}
	where the second inequality is by Proposition \ref{prop-new}. 
With the same arguments as in the proof of Claim \eqref{claim3}, we have $\tdP^{(t)} \in \cN_R(\B I_n)$, hence $\| \tdH \B P^{(t+1)} \B y \|^2 \le \| \tdH \tdP^{(t)} \B y \|^2$. 
Using Lemma \ref{lemma: step-bound} again with $\B P = \tdP^{(t)}$ and $\tdP =\B  P^{(t+1)}$, we have 
\begin{equation}\label{ineq-p2}
\begin{aligned}
& 	\| \B P^{(t+1)} \B y- \B P^* \B y \|^2 - 	\| \tdP^{(t)} \B y- \B P^* \B y \|^2 \\
\le& \| \tdH \B P^{(t+1)} \B y \|^2 - \| \tdH \tdP^{(t)} \B y \|^2  + L^2/5
\le  L^2/5  \ .
\end{aligned}
\end{equation}
Combining \eqref{ineq-p1} and \eqref{ineq-p2} we have 
$$
\| \B P^{(t+1)} \B y- \B P^* \B y \|^2 - \|  \B P^{(t)} \B y - \B P^*  \B y \|^2  \le L^2/5 - L^2/2 < -L^2/5
$$
which completes the proof of the Lemma~\ref{lemma: large decrease in past iterations}.

\subsection{Proof of Lemma \ref{lemma: rip}}\label{subsection: proof of rip lemma}

We will prove that 
for any $ \B u \in  \cB_m$ (cf definition~\eqref{defn-Bm}),
\begin{eqnarray}\label{app:lemma1-to-prove}
\| \B H \B u \|^2 = \| \B X (\B X^\T \B X)^{-1} \B X^\T  \B u\|^2 \le \dt_{n,m} \| \B u\|^2 \ .
\end{eqnarray}
Take $t_n := \sqrt{3b^2 \log(2d/\tau) / (n\itl \B \Sigma\itl_2)}$. By assumption in the statement of Lemma \ref{lemma: rip} 
we have $t_n \itl \B\Sigma \itl_2 \le \ga/2$. From Lemma~\ref{lemma: sample covariance estimation} and some simple algebra we have:
\begin{equation}\label{ineq: covariance bound}
\itl  (1/n) \B X^\T \B X - \B \Sigma \itl_2 \le t_n \itl \B \Sigma\itl_2 
\end{equation}
with probability at least $1-\tau$. 
By Weyl's inequality, $|\lam_{\min} (\B X^\T \B X)/n- \lam_{\min} (\B \Sigma)|$ is bounded by the left hand side of \eqref{ineq: covariance bound}. So we have 
\begin{equation*}
\lam_{\min} (\B X^\T \B X)/n \ge 
\lam_{\min} (\B \Sigma)- t_n \itl \B\Sigma \itl_2 
\ge  \ga -  \ga/2 =  \gamma/2 \  \end{equation*}
where, we use $t_{n}\itl \B\Sigma \itl_2 \leq \ga/2$.
Hence we have $\lam_{\max}((\B X^\T \B X)^{-1}) \le {2}/{(n \ga)}  $ and
\begin{eqnarray}\label{ineq-XXX}
\itl \B X (\B X^\T \B X)^{-1}\itl_2 = \sqrt{\lam_{\max}((\B X^\T \B X)^{-1})} \le \sqrt{2/(n\ga)} \ .
\end{eqnarray} 
Let $ \cB_m(1) := \{\B u\in \cB_m: ~ \|\B u\|\le1 \}$, and let $\B u^1,...,\B u^M$ be an $(\sqrt{\dt_{n,m}}/2)$-net of $\cB_m(1)  $, that is, for any $\B u\in \cB_m(1)$, there exists some $\B u^j$ such that $ \|\B u^j - \B u\| \le \sqrt{\dt_{n,m}}/2 $. 
Since the $(\sqrt{\dt_{n,m}}/2) $-covering number of $\cB_m(1)$ is bounded by 
$({6}/{\sqrt{\dt_{n,m}}})^m  {n \choose m}$,
we can take 
$$
M \le ({6}/{\sqrt{\dt_{n,m}}})^m {n \choose m} \le   (3n)^m n^m = (3n^2)^m
$$
where the second inequality is from our assumption that $\sqrt{\dt_{n,m}}\ge 2/n$. 
By Hoeffding inequality, for each fixed $j\in [M]$, and for all $ k\in [d] $, we have
\begin{eqnarray}
\Pb \Big(     \frac{1}{\sqrt{n}} \lt| \B e_k^\T \B X^\T \B u^j  \rt| > t\Big)  
\le 2 \exp \Big( - \frac{nt^2}{2 \|\B u^j\|^2 V^2} \Big) \ . \nonumber
\end{eqnarray}
Using a union bound for $k\in [d]$ to the inequality above, we have that
for any $\rho>0$,  
with probability at least $  1- \rho $, the following inequality holds for all $k\in [d]$
\begin{eqnarray}
\lt| \B e_k^\T \B X^\T \B u^j  \rt| /\sqrt{n} \le \sqrt{{2}  \log(2d/\rho) /n} V \| \B u^j\|\le V\sqrt{{2} \log(2d/\rho) /n} \ , \nonumber
\end{eqnarray}
where the second inequality is because each $\B u^j \in \cB_m(1)$. As a result, 
$$
\setlength{\abovedisplayskip}{5pt}
\setlength{\belowdisplayskip}{5pt}
\frac{1}{\sqrt{n}} \| \B X^\T \B u^j \| = \Big(\sum_{k=1}^d   \lt(  | \B e_k^\T \B X^\T \B u^j  |/\sqrt{n} \rt)^2     \Big)^{1/2}  \le V\sqrt{{2d} \log(2d/\rho) /n} \ .
$$
Take $\rho = \tau/ M$, then by the union bound, with probability at least $1-\tau$, it holds
\begin{eqnarray}\label{ineq: nets}
\| \B X^\T \B u^j \|/\sqrt{n}  \le V\sqrt{{2d} \log(2dM/\tau)/n } ~ ~~~ \forall ~ j\in [M] \ .
\end{eqnarray}
Combining~\eqref{ineq: nets} with \eqref{ineq-XXX}, we have that for all $j\in [M]$,
\begin{equation}\label{append:lemma1-proof11} 
\setlength{\belowdisplayskip}{3pt}
\begin{aligned}
\| \B X(\B X^\T \B X)^{-1} \B X^{\T} \B u^j \| \le& \itl \B X(\B X^\T \B X)^{-1}\itl_2\cdot  \| \B X^\T \B u^j \|\\
\le& 2V \sqrt{({d}/{n\ga }) \log(2dM/\tau) }   \ . 
\end{aligned}
\end{equation}
Recall that $ M\le (3n^2)^m $, so we have
\begin{eqnarray}
2V \sqrt{({d}/{n\ga }) \log(2dM/\tau) } \le 2V \Big( \frac{d}{n\ga}\log(2d/\tau)   + \frac{dm}{n\ga} \log(3n^2) \Big)^{1/2} = \frac{\sqrt{\dt_{n,m}}}{2} \nonumber
\end{eqnarray}
where the last equality follows the definition of $\dt_{n,m}$. 
Using the above bound in~\eqref{append:lemma1-proof11}, we have
$$
\setlength{\abovedisplayskip}{3pt}
\| \B X(\B X^\T \B X)^{-1} \B X^{\T} \B u^j \| \le \sqrt{\dt_{n,m}}/2.
$$
For any $\B u\in \cB_m(1)$, there exists some $j\in [M]$ such that $\| \B u- \B u^j\| \le \sqrt{\dt_{n,m}}/2$, hence 
\begin{eqnarray}\label{ineq-final}
\| \B X(\B X^\T \B X)^{-1} \B X^{\T} \B u \| &\le& \| \B X(\B X^\T \B X)^{-1} \B X^{\T} \B u^j \|  + \| \B X(\B X^\T \B X)^{-1} \B X^{\T} (\B u-\B u^j) \|  \nonumber\\
&\le&
\sqrt{\dt_{n,m}}/2 + \| \B u- \B u^j\|_2 
\le
\sqrt{\dt_{n,m}} \ .
\end{eqnarray}
Since both \eqref{ineq: covariance bound} and \eqref{ineq: nets} have failure probability of at most $\tau$, we know that 
\eqref{ineq-final} holds
with probability at least $1-2\tau$. This proves the conclusion for all $\B u\in \cB_m(1)$. For a general $\B u\in \cB_m$, $ \B u/ \| \B u\| \in  \cB_m(1)$, hence we have
$$
\setlength{\abovedisplayskip}{5pt}
\setlength{\belowdisplayskip}{5pt}
\| \B H \B u\|  = \| \B X(\B X^\T \B X)^{-1}\B X^{\T} \B u \| \le \sqrt{\dt_{n,m}}  \|\B u\|
$$
which is equivalent to what we had set out to prove~\eqref{app:lemma1-to-prove}.

	\subsection{Proof of Lemma~\ref{lemma: noise}}\label{section: proof of lemma noise}
	
	Since $\ep_i\sim \subG(\sigma^2)$, by Chernoff inequality, for all $t>0$, we have 
	$$
	\Pb( |\ep_i| >t  ) \le 2 \exp (-t^2/(2\sigma^2)) \ .
	$$
	As a result, for all $t>0$, 
	$\Pb (\| \B \ep\|_{\infty}>t) \le \sum_{i=1}^n \Pb( |\ep_i| >t  ) \le 2n \exp (-t^2/(2\sigma^2)) $.
	Therefore with probability at least $1-\tau/3$ we have  $\| \B \ep \|_{\infty} \le \sigma \sqrt{2\log(6n/\tau)}$. 
	
	For any $i\in [n]$, 
	since $\| \B e_i^\T \tdH \|^2 \le 1   $, so it is easy to check that $\B e_i^\T \tdH \B\ep $ is also sub-Gaussian with variance proxy $\sigma^2$. Similar to the arguments in the last paragraph, with probability at least $1- \tau/3$, we have $\| \tdH \B \ep \|_{\infty} \le \sigma \sqrt{2\log(6n/\tau)}$. 
	So we have proved that with probability at least $1-2\tau/3$, the inequalities in (a) hold. 
	
	Let $\B X = \B{\bar U} \B D \B{\bar V}^\T$ be the SVD of $\B X$ with $\B{\bar U} \in \R^{n\times d}$ satisfying $\B{\bar U}^\T \B{\bar U} = \B I_d$; $\B{\bar V} \in \R^{d\times d}$ satisfying $ \B{\bar V}^\T \B{\bar V} = \B I_d$; and $\B D \in \R^{d\times d}$ being diagonal. Then we have 
	\begin{equation}
	\|  (\B X^\T \B X)^{-1}\B X^{\T}  \B \ep  \|  =  
	\| \B{\bar V}  \B{D}^{-1} \B{\bar U}^\T  \B \ep \| 
	=  \|   \B{D}^{-1} \B{\bar U}^\T  \B \ep \| 
	\le 
	\itl   \B{D}^{-1}   \itl_2  \sqrt{d} \| \B{\bar U}^\T \B \ep \|_{\infty} 
	\end{equation}
	and 
	\begin{equation}
	\| \B H \B \ep \|  = \|  \B{\bar U} \B{\bar U}^\T \B \ep \| = \| \B{\bar U}^\T \B \ep  \| \le \sqrt{d} \| \B{\bar U}^\T \B \ep \|_{\infty} 
	\end{equation}
	Note that for any $j\in [d]$, one can verify that $\B e_j^\T \B{\bar U}^\T \B\ep$ is sub-Gaussian with variance proxy $ \sigma^2$. Hence, for any $t>0$, we have 
	$$
	\Pb(\|\B{\bar U}^\T \B \ep \|_{\infty} >t ) \le 2 d \exp(-t^2/(2\sigma^2))  \ . 
	$$
	As a result, with probability at least $1-\tau/3$ we have 
	$\| \B{\bar U}^\T \B \ep \|_{\infty} \le \sigma \sqrt{2\log(6d/\tau)}  $, and hence 
	\begin{equation}
	\|  (\B X^\T \B X)^{-1}\B X^{\T}  \B \ep  \|  \le  
	\itl   \B{D}^{-1}   \itl_2  \sigma \sqrt{2d\log(6d/\tau)}  \ ,
	\end{equation}
	and 
	\begin{equation}
	\| \B H \B\ep \| \le \sigma \sqrt{2d\log(6d/\tau)} \ .
	\end{equation}
	Note that $	\itl   \B{D}^{-1}   \itl_2 = \frac{1}{\sqrt{n}} \lam_{\min}^{-1/2} (\frac{1}{n} \B X^\T \B X ) $, so 
	this completes the proofs of (b) and (c).

	To prove \eqref{conclusion2-noise}, using Bernstein inequality we have
	$$
	\Pb (| \| \B \ep\|^2 /n - \wtd \sigma^2| > t\sigma^2 ) \le 2\exp(-Cn(t^2 \wedge t))
	$$
	for a universal constant $C$. 
	Taking $t = \sqrt{\log(4/\tau)/(Cn)}$ in the inequality above, and note that $t\le 1$ (because of the assumption $\sqrt{\log(4/\tau)/(Cn)} + 2d \log(4d/\tau)/n \le 1/4 $), we have 
	\begin{equation}
	\Pb (| \| \B \ep\|^2 /n - \wtd \sigma^2| > \sqrt{\log(4/\tau)/(Cn)}\sigma^2 ) \le 2\exp(-Cn t^2 ) = \tau/2.
	\end{equation}
	As a result,
	with probability at least $1-\tau/2$
	\begin{eqnarray}
	\| \B \ep \|^2 \ge  n \wtd\sigma^2 - n\sigma^2 \sqrt{\log(4/\tau)/(Cn)} \ge 
	(3/4-\sqrt{\log(4/\tau)/(Cn)} \ ) n\sigma^2 \  \nonumber
	\end{eqnarray}
	where the second inequality makes use of the assumption that $\wtd \sigma^2 \ge (3/4) \sigma^2$. 
	On the other hand we know that with probability at least $1-\tau/2$ it holds $\| \B H \B \ep \|\le  \sigma \sqrt{2d\log(4d/\tau)}$. As a result, with probability at least $1-\tau$ we have
	\begin{eqnarray}
	\|\tdH \B \ep \|^2 = \|\B \ep\|^2 - \|\B H \B \ep \|^2 \ge (3/4-\sqrt{\log(4/\tau)/(Cn)}  -   2d \log(4d/\tau)/n) n\sigma^2 \ge n\sigma^2/2 \ , \nonumber
	\end{eqnarray}
	where the last inequality uses the assumption $\sqrt{\log(4/\tau)/(Cn)}  +   2d \log(4d/\tau)/n \le 1/4$.

\subsection{Proof of Claim \eqref{claim2}}\label{subsection: proof of claims}

To prove Claim \eqref{claim2}, 
we just need to prove that $\tdP^{(k)} \in \cN_R(\B I_n)$, i.e. $\dist(\tdP^{(k)} ,\B  I_n) \le R$. If $k\le R/2 - 1$, because $ \dist(\B P^{(t+1)}, \B P^{(t)}) \le 2 $ for all $t\ge 0$ and $\B P^{(0)} =\B  I_n$, 
we have $ \dist(\B P^{(k)}, \B I_n) \le 2k \le R-2  $. Hence 
$$
\setlength{\abovedisplayskip}{3pt}
\setlength{\belowdisplayskip}{3pt}
\dist(\tdP^{(k)}, \B I_n) \le \dist(\tdP^{(k)}, \B P^{(k)}) + \dist( \B P^{(k)}, \B I_n)  \le  R \ .
$$
Otherwise, by Proposition~\ref{prop: support-inclusion}, it holds $ \supp( \B P^* ) \subseteq \supp(\B P^{(k)})$. 
Then from Lemma \ref{lemma: supp-inclusion}, we have $ \supp(\tdP^{(k)}) \subseteq   \supp(\B P^{(k)})$, therefore $\tdP^{(k)} \in \cN_R( \B I_n)$.

\subsection{Proof of Claim \eqref{claim-bound}}\label{subsection: proof of claim-bound}
Note that 
\begin{equation}\label{ineq--0}
|\la  \tdH \B z ,  \tdH \B \ep \ra | \le 	|\la   \B H \B z ,  \B H \B \ep \ra |  + 
|\la   \B z ,   \B \ep \ra |   \ . 
\end{equation}
Let $w = 33$. 
Since only two coordinates of $z$ are non-zero, we have 
\begin{equation}\label{ineq--1}
|\la   \B z ,   \B \ep \ra | \le \| \B z\| \sqrt{2}\bar \sigma  \le \eta\| \B z\|^2/w + w\bar \sigma^2 /(2\eta) \ , 
\end{equation}
where the second inequality uses Cauchy-Schwarz inequality. 
Using Assumption \ref{ass-main0} (3) and (4) we have 
$$
|\la  \B H \B z ,  \B H \B \ep \ra |  \le \| \B H \B z \| \| \B H \B \ep \| \le \sqrt{\rho_n} \|\B z\| \sqrt{d} \bar \sigma \le \rho_n \|\B z\|^2 /2 +  d\bar \sigma^2/2 \ .
$$
By Assumption \ref{ass-main0} (3), we have  $\rho_nR  \le L^2/(90U^2)  \le 1/73 \le 1/(aw)$, hence
$ \rho_n/2 \le 1/(2aRw) =   \eta/w$. As a result, 
\begin{equation}\label{ineq--2}
|\la  \B H \B z ,  \B H \B \ep \ra | \le \eta\|\B z\|^2/w +   d\bar \sigma^2/2
\ .  
\end{equation}
Combining \eqref{ineq--1}, \eqref{ineq--2} and \eqref{ineq--0} we have 
\begin{equation}\label{prev1}
|\la  \tdH \B z ,  \tdH \B \ep \ra |  \le 
2\eta \|\B z\|^2/w + w\bar \sigma^2 /(2\eta)+  d\bar \sigma^2/2 \ .
\end{equation}
From $\rho_nR  \le L^2/(90U^2)  $ we know that $\rho_n \le 0.01$. Therefore by Assumption \ref{ass-main0} (3) we have 
$ \|\tdH z\|^2 \ge (1-\rho_n) \| \B z\|^2 \ge 0.99\|\B z\|^2  $. So $ \| \B z\|^2 \le 1.02 \|\tdH \B z\|^2 $ and combines it with \eqref{prev1} we have
\begin{eqnarray}
|\la  \tdH \B z ,  \tdH \B \ep \ra | 
&\le& 
2.04 \eta  \|\tdH \B z\|^2/w+ w\bar \sigma^2 /(2\eta) +  d\bar \sigma^2/2 \nonumber\\
&\le&
4.08\eta \|\tdH \tdP^{(k)} \B y\|^2/w + 4.08\eta\|\tdH  \B P^{(k)} \B y\|^2/w +
w\bar \sigma^2 /(2\eta) +  d\bar \sigma^2/2 \ , \nonumber
\end{eqnarray}
where the second inequality uses the definition of $\B z$ and the Cauchy-Schwarz inequality. Note that $4.08/w = 4.08/33 \le 1/8$ and $w/2 = 33/2 \le 17$, so we have 
\begin{equation}\label{paart1}
|\la  \tdH \B z ,  \tdH \B \ep \ra | \le 	\eta \|\tdH \tdP^{(k)} \B y\|^2/8 + \eta\|\tdH  \B P^{(k)} \B y\|^2/8 +
17 \eta^{-1}\bar \sigma^2 +  d\bar \sigma^2/2  \ .
\end{equation}
From Assumption \ref{ass-main} (2), we have $  \bar \sigma^2 / \sigma^2 \le \min \{n/(660R^2), n/(5Rd)\}$, which implies 
\begin{equation}\label{paart2}
17\eta^{-1}\bar \sigma^2\le  (1/2)\eta n \sigma^2 \le \eta \|\tdH \B \ep \|^2 , ~~ {\rm and} ~~  d\bar \sigma^2/2 \le (1/2)\eta n \sigma^2 \le \eta\|\tdH \B \ep \|^2 \ .
\end{equation}
As a result, by \eqref{paart1} and \eqref{paart2} we have 
\begin{equation}\label{ineq-c2}
|\la  \tdH \B z ,  \tdH \B \ep \ra |  \le 
\eta \|\tdH \tdP^{(k)} \B y\|^2/8 + \eta\|\tdH  \B P^{(k)} \B y\|^2/8 + 2\eta \|\tdH \B \ep \|^2 \ . 
\end{equation}
Multiplying $2$ in both sides of \eqref{ineq-c2} we complete the proof.

	\bibliographystyle{plain}     
	\bibliography{mybib_local_search}

\end{document}